\documentclass[final,1p]{elsarticle}

\usepackage{amssymb}
\usepackage{amsmath}
\usepackage{amsthm}

\usepackage{lineno}

\journal{Journal of Functional Analysis}

\usepackage{mathtools}

\usepackage{amsfonts,amssymb,amsmath,amsthm,stmaryrd}

\usepackage{xcolor}

\usepackage{ulem}

\usepackage[colorlinks]{hyperref}

\numberwithin{equation}{section}

\theoremstyle{plain}
\newtheorem{theorem}{Theorem}[section]
\newtheorem{proposition}{Proposition}[section]
\newtheorem{lemma}{Lemma}[section]
\newtheorem{corollary}{Corollary}[section]

\theoremstyle{definition}
\newtheorem{definition}{Definition}[section]

\theoremstyle{remark}
\newtheorem{remark}{Remark}[section]

\let\coloneq\relax
\DeclareRobustCommand{\coloneq}{\mathrel{\mathop{:}\! =}}

\makeatletter
\def\polhk#1{\setbox0=\hbox{#1}{\ooalign{\hidewidth\lower1.5ex\hbox{`}\hidewidth\crcr\unhbox0}}}
\makeatother

\allowdisplaybreaks

\begin{document}

%\linenumbers

\begin{frontmatter}

\title{Viscosity and minimax solutions for path-dependent Hamilton--Jacobi equations in infinite dimensions and related differential games}

\cortext[cor1]{Corresponding author}

\author[aff0]{Erhan Bayraktar\fnref{funding0}}
\ead{erhan@umich.edu}

\author[aff2,aff3]{Mikhail Gomoyunov}
\ead{m.i.gomoyunov@gmail.com}

\author[aff1]{Christian Keller\corref{cor1}\fnref{funding1}}
\ead{christian.keller@ucf.edu}

\affiliation[aff0]{organization={Department of Mathematics, University of Michigan},
            city={Ann Arbor},
            state={MI},
            postcode={48109},
            country={United States}}

\affiliation[aff2]{organization={N.N. Krasovskii Institute of Mathematics and Mechanics of the Ural Branch of the Russian Academy of Sciences},
            addressline={16 S. Kovalevskaya Str.},
            city={Ekaterinburg},
            postcode={620077},
            country={Russia}}

\affiliation[aff3]{organization={Ural Federal University},
            addressline={19 Mira Str.},
            city={Ekaterinburg},
            postcode={620002},
            country={Russia}}
            
 \affiliation[aff1]{organization={Department of Mathematics, University of Central Florida},
            city={Orlando},
            state={FL},
            postcode={32816},
            country={United States}}           
 
 \fntext[funding0]{Supported in part by NSF Grant DMS-2507940.}
           
 \fntext[funding1]{Supported in part by NSF Grant DMS-2106077.}

\begin{abstract}
    We establish new results for path-dependent Hamilton--Jacobi equations with nonlinear monotone, and coercive operators on Hilbert space, which were initially studied in Bayraktar and Keller [\textit{J. Funct. Anal.}, 275~(8) (2018), pp.~2096--2161].
    Under more general assumptions than in the cited paper (and more general than in the finite-dimensional case as well), we prove the uniqueness of a minimax solution of a terminal-value problem for the equation under consideration and the existence of such a solution on the whole path space.
    We introduce a new notion of a viscosity solution for this problem and show the equivalence of this notion to the notion of a minimax solution, which implies the corresponding existence and uniqueness theorem for viscosity solutions.
    In addition, we obtain a stability result for viscosity solutions using the half-relaxed limits method.
    As applications, we prove two theorems on the existence and characterization of value of a zero-sum differential game for a time-delay (path-dependent) evolution equation.
    The first theorem pertains to the case of non-anticipative (Elliott--Kalton) strategies and is related to the results on viscosity solutions, and the second theorem deals with the case of feedback (Krasovskii--Subbotin) strategies and is based on the results on minimax solutions.
\end{abstract}

\begin{keyword}
    path-dependent partial differential equations
    \sep viscosity solutions
    \sep minimax solutions
    \sep differential games
    \sep nonlinear evolution equations

    \MSC[2020] 35D40 \sep 35R15 \sep 47H05 \sep 91A23
\end{keyword}

\end{frontmatter}

\section{Introduction}

    We consider a path-dependent Hamilton--Jacobi equation with a nonlinear, monotone, and coercive operator on Hilbert space.
    We study generalized (minimax, viscosity) solutions of a terminal value problem for this equation and give applications to the study of differential games for time-delay (path-dependent) evolution equations.
    The goal of the paper is to strengthen and develop the main results of \cite{Bayraktar_Keller_2018}.

    For a detailed literature review and context, the reader is referred to \cite{Bayraktar_Keller_2018}.
    We limit ourselves here to mentioning that the theory of minimax solutions of Hamilton--Jacobi equations originated in \cite{Subbotin_1980}, see also \cite{Subbotin_1993,Subbotin_1995}.
    For the case of path-dependent Hamilton--Jacobi equations, the development of the theory of minimax solutions began with \cite{Lukoyanov_2001,Lukoyanov_2003a}.
    For the current state of research, see \cite{Gomoyunov_Lukoyanov_RMS_2024}.
    The infinite-dimensional case was considered in \cite{Bayraktar_Keller_2018}.
    The theory of viscosity solutions of Hamilton--Jacobi equations originated in \cite{Crandall_Lions_1983,Crandall_Evans_Lions_1984}.
    For the case of path-dependent equations, the development of the theory of viscosity solutions began with \cite{Lukoyanov_2007}.
    Note that there are quite a large number of works and approaches in this direction \cite{Cosso_Gozzi_Rosestolato_Russo_2021,Ekren_Keller_Touzi_Zhang_2014,Ekren_Touzi_Zhang_2016_1,Plaksin_2021_SIAM,Ren_Touzi_Zhang_2014,Zhou_2018_COCV}.
    The infinite-dimensional case was addressed in \cite{Bayraktar_Keller_2018,Zhou_2022}.

    We work with more general assumptions compared to \cite{Bayraktar_Keller_2018} under which the existence and uniqueness of a minimax solution is proved.
    More specifically, we significantly relax the assumption about the nature of the dependence of the Hamiltonian of the considered Hamilton--Jacobi equation on the path variable.
    Our generalization allows for general delays and thus seems natural and important from the point of view of applications to optimal control problems and differential games for time-delay evolution equations.
    Note also that our assumptions on the Hamiltonian of the equation are more general than those considered in the finite-dimensional case (cf. \cite{Gomoyunov_Lukoyanov_RMS_2024}).
    An additional contribution compared to \cite{Bayraktar_Keller_2018} is that we prove the existence of a minimax solution on the entire path space under consideration, and not on a locally compact subset of it.

    The proof of the theorem on existence and uniqueness of the minimax solution follows the same ``five-step-scheme'' as in \cite{Bayraktar_Keller_2018}.
    The main difference is the use of a different penalty function in the proof of the comparison result.
    As a basis, a special function $\Upsilon$ introduced in the works of J. Zhou is chosen
    (see \cite{Zhou_2020} and also \cite{Zhou_2022,Zhou_2023}).
    This function $\Upsilon$ was also used to prove close results in the finite-dimensional case
    (see, e.g., \cite{Zhou_2020,Gomoyunov_Plaksin_2023} for viscosity solutions of path-dependent Hamilton--Jacobi--Bellman equations and \cite{Gomoyunov_Lukoyanov_Plaksin_2021} for minimax solutions of path-dependent Hamilton--Jacobi equations).
    Since our case is infinite-dimensional, or more precisely, since we work in the variational framework involving Gelfand triples, and the conditions are quite general, additional difficulties arose in justifying suitable smoothness properties of $\Upsilon$, allowing the use of the functional chain rule for this function.
    This difficulty was overcome by using and developing the technique of coinvariant derivatives.
    The proposed approach is quite general and can be useful in other situations when working with other functions.
    For a different approach in the infinite-dimensional case, or more precisely, in the semigroup framework, we refer the reader to \cite{Zhou_2022}, where  path-dependent Hamilton--Jacobi--Bellman equations with unbounded linear operators on Hilbert space are studied.

    We propose a new definition of a viscosity solution of the considered terminal value problem.
    This notion goes back to the constructions from previous works (see, e.g., \cite{Lukoyanov_2006}     and \cite{Bayraktar_Keller_2018}).
    In connection with the infinite-dimensional case under consideration, a singular component is introduced into the definition of test functions, which requires careful processing in the proofs.
    We prove the equivalence of the definitions of minimax and viscosity solutions and thereby obtain a result on the existence and uniqueness of the viscosity solution.
    In addition, we develop the half-relaxed limits method and prove a stability result for viscosity solutions.
    Note that our method of proving this stability result goes back to the theory of minimax solutions.

    We apply the main results to study a zero-sum differential game for a time-delay evolution equation.
    As an illustrative example, one can consider a differential game for a time-delay quasi-linear parabolic partial differential equation.
    We first consider a formulation of the game in the classes of non-anticipative strategies (Elliott--Kalton strategies \cite{Elliott_Kalton_1972}, see also, e.g., \cite{Fleming_Soner_2006,Yong_2015}).
    We introduce lower and upper game values and prove that they coincide with unique viscosity (and, hence, minimax) solutions of the corresponding terminal-value problems for lower and upper Isaacs equations respectively.
    In general, the proof follows the same lines as in the classical (non-path-dependent and finite-dimensional) case, which also shows the naturalness of the viscosity solution concept introduced in the paper and its convenience when working with applications.
    Note also that rather general assumptions about the nature of the players' control actions are considered: it is only assumed that their values belong to compact topological spaces.
    In this connection, it was necessary to obtain a special version of the measurable selection theorem.
    In the case where the Isaacs condition holds, the lower and upper game values coincide, which means that game has value in the classes of non-anticipative strategies.

    In addition, we consider a formulation of the game in the classes of feedback strategies (Krasovskii--Subbotin strategies \cite{Krasovskii_Subbotin_1988}, see also \cite{Krasovskii_Krasovskii_1995,Subbotin_1995}).
    Under the Isaacs condition, we prove that the game has value in this formulation and that this value coincides with the unique minimax (and, hence, viscosity) solution of the corresponding terminal-value problem for the Isaacs equation.
    The proof follows the same lines as in \cite{Bayraktar_Keller_2018} but with another choice of the Lyapunov--Krasovskii function, which is now based on the function $\Upsilon$.
    In the finite-dimensional setting, close results were obtained in \cite[Section 3]{Gomoyunov_Lukoyanov_RMS_2024} (see also \cite{Gomoyunov_Lukoyanov_2024}).

\section{Setting}

    Let $V \subset H \subset V^\ast$ be a Gelfand triple in the sense of \cite[Chapter 1, Definition 1.3]{Hu_Papageorgiou_2000} (see also \cite[Definition 23.11]{ZeidlerIIA}), i.e., $H$ is a separable Hilbert space, $V$ is a reflexive, separable Banach space, and $V$ is continuously and densely embedded into $H$.
    Moreover, it is assumed that the embedding from $V$ into $H$ is compact.
    Denote by $\|\cdot\|$, $|\cdot|$, and $\|\cdot\|_\ast$ the norms on $V$, $H$, and $V^\ast$.
    We also use $|\cdot|$ for norms on Euclidean spaces.
    We write $(\cdot, \cdot)$ for the inner product in $H$ and $\langle \cdot, \cdot \rangle$ for the duality pairing between $V$ and $V^\ast$.
    We also have $\langle h, v \rangle=(h, v)$ for all $h \in H$ and $v \in V$ (see \cite[Section 23.4]{ZeidlerIIA}).

    Fix $T > 0$.
    The space $C([0, T], H)$ is equipped with the supremum norm $\|\cdot\|_\infty$ and the space $[0, T] \times C([0, T], H)$ is equipped with the pseudometric
    \begin{equation*}
        \mathbf{d}_\infty((t_1, x_1), (t_2, x_2))
        \coloneq |t_1 - t_2| + \|x_1(\cdot \wedge t_1) - x_2(\cdot \wedge t_2)\|_\infty,
    \end{equation*}
    where $(t_1, x_1)$, $(t_2, x_2) \in [0, T] \times C([0, T], H)$, $a \wedge b \coloneq \min\{a, b\}$ for all $a$, $b \in \mathbb{R}$ and, respectively, $x(s \wedge t) = x(s)$ if $s \in [0, t]$ and $x(s \wedge t) = x(t)$ if $s \in [t, T]$ for every $(t, x) \in [0, T] \times C([0, T], H)$.

    Fix $p \geq 2$ and $q > 1$ with $1 / p + 1 / q = 1$.
    Let $A \colon [0, T] \times V \to V^\ast$ be given and satisfy the following hypotheses (see \cite[Section 2.2]{Bayraktar_Keller_2018}).

    $\mathbf{H}(A)$:
    (i)
        For every $x$, $v \in V$, the map $t \mapsto \langle A(t, x), v \rangle$, $[0, T] \to \mathbb{R}$, is (Lebesgue) measurable.

    (ii)
        Monotonicity:
        For almost every (a.e.) $t \in [0, T]$ and every $x$, $y \in V$,
        \begin{equation*}
            \langle A(t, x) - A(t, y), x - y \rangle
            \geq 0.
        \end{equation*}

    (iii)
        Hemicontinuity:
        For a.e. $t \in [0, T]$ and every $x$, $y$, $v \in V$, the map $s \mapsto \langle A(t, x + s y), v \rangle$, $[0, 1] \to \mathbb{R}$, is continuous.

    (iv)
        Boundedness:
        There exist a function $a_1 \in L^q(0, T; \mathbb{R}_+)$ and a number $c_1 \geq 0$ such that, for a.e. $t \in [0, T]$ and every $x \in V$,
        \begin{equation*}
            \|A(t, x)\|_\ast
            \leq a_1(t) + c_1 \|x\|^{p - 1}.
        \end{equation*}

    (v)
        Coercivity:
        There exists a number $c_2 > 0$ such that, for a.e. $t \in [0, T]$ and every $x \in V$,
        \begin{equation*}
            \langle A(t, x), x \rangle
            \geq c_2 \|x\|^p.
        \end{equation*}

    \begin{definition}
        Given $L \geq 0$ and $(t_0, x_0) \in [0, T) \times C([0, T], H)$, denote by $\mathcal{X}^L(t_0, x_0)$ the set of all $x \in C([0, T], H)$ such that $x(t) = x_0(t)$ for every $t \in [0, t]$, $x|_{(t_0, T)} \in W_{p q}(t_0, T)$ and there exists a function $f^x \in L^2(t_0, T; H)$ such that, for a.e. $t \in [t_0, T]$,
        \begin{equation*}
           x^\prime(t) + A(t, x(t)) = f^x(t),
           \quad |f^x(t)|
           \leq L (1 + \|x(\cdot \wedge t)\|_\infty).
        \end{equation*}
        Here, $W_{p q}(t_0, T)$ denotes the space of all $x \in L^p(t_0, T; V)$ possessing a generalized derivative $x^\prime \in L^q(t_0, T; V^\ast)$.
        This space is equipped with the norm $\|\cdot\|_{W_{pq}(t_0, T)}$ defined by
        \begin{equation*}
            \|x\|_{W_{p q}(0, T)}
            \coloneq \|x\|_{L^p(t_0, T; V)} + \|x^\prime\|_{L^q(t_0, T; V^\ast)}.
        \end{equation*}
        See \cite[Definition 2.6]{Bayraktar_Keller_2018} and \cite[Proposition 23.23]{ZeidlerIIA} for more details.
    \end{definition}

    \begin{remark}
        Given a differentiable function $y \colon [t_0, T] \to V$, we will denote its derivative by $\dot{y}$.
        If there is an $x \in W_{p q}(t_0, T)$ such that $x(t) = y(t)$ for a.e. $t\in (t_0, T)$, then $x^\prime(t) = \dot{y}(t)$ for a.e. $t\in (t_0, T)$.
    \end{remark}

    By \cite[Proposition 2.10]{Bayraktar_Keller_2018}, the set $\mathcal{X}^L(t_0, x_0)$ is non-empty and compact in $C([0, T], H)$.

    \begin{definition}
        Given $t_\ast \in [0, T)$, denote by $\mathcal{C}_V^{1, 1}([t_\ast, T] \times C([0, T], H))$ the set of all continuous functions $\varphi \colon [t_\ast, T] \times C([0, T], H) \to \mathbb{R}$ for each of which there exist continuous functions $\partial_t \varphi \colon [t_\ast, T] \times C([0, T], H) \to \mathbb{R}$ and $\partial_x \varphi \colon [t_\ast, T] \times C([0, T], H) \to H$, called path derivatives of $\varphi$, such that:

        (i)
            for every $x \in C([0, T], H)$ with $x(t) \in V$ for a.e. $t \in [t_\ast, T]$, it holds that $\partial_x \varphi(t, x) \in V$ for a.e. $t \in [t_\ast, T]$,

        (ii)
            for every $t_0 \in [t_\ast, T)$, every $x_0 \in C([0, T], H)$, every $x \in C([0, T], H)$ with $x(t) = x_0(t)$ for every $t \in [0, t_0]$ and $x|_{(t_0, T)} \in W_{p q}(t_0, T)$, and every $t \in [t_0, T]$, the functional chain rule holds:
            \begin{equation*}
                \varphi(t, x) - \varphi(t_0, x_0)
                = \int_{t_0}^{t} \bigl( \partial_t \varphi(s, x)
                + \langle x^\prime(s), \partial_x \varphi(s, x) \rangle \bigr) \, ds.
            \end{equation*}
        See \cite[Section 2.4]{Bayraktar_Keller_2018} for more details.
    \end{definition}

    Let functions $F \colon [0, T] \times C([0, T], H) \times H \to \mathbb{R}$ and $h \colon C([0, T], H) \to \mathbb{R}$ be given and satisfy the following hypotheses.

    $\mathbf{H}(F)$:
    (i)
        The function $F$ is continuous.

    (ii)
        There exists a number $L_0 \geq 0$ such that, for every $t \in [0, T]$, every $x \in C([0, T], H)$, and every $z_1$, $z_2 \in H$,
        \begin{equation*}
            F(t, x, z_1) - F(t, x, z_2)
            \leq L_0 (1 + \|x(\cdot \wedge t)\|_\infty) |z_1 - z_2|.
        \end{equation*}

    (iii)
        For every $L \geq 0$ and every $(t_0, x_0) \in [0, T) \times C([0, T], H)$, there exists a modulus of continuity $m_{L, t_0, x_0}$ such that, for every $\varepsilon > 0$, every $t \in [t_0, T]$, and every $x_1$, $x_2 \in \mathcal{X}^L(t_0, x_0)$,
        \begin{equation} \label{condition_F}
            \begin{aligned}
                & F \bigl( t, x_1, \varepsilon^{- 1} (x_1(t) - x_2(t)) \bigr)
                - F \bigl( t, x_2, \varepsilon^{- 1} (x_1(t) - x_2(t)) \bigr) \\
                & \quad \leq m_{L, t_0, x_0} \Bigl( \varepsilon^{- 1} |x_1(t) - x_2(t)|
                \|x_1(\cdot \wedge t) - x_2(\cdot \wedge t)\|_\infty
                + \|x_1(\cdot \wedge t) - x_2(\cdot \wedge t)\|_\infty \Bigr).
            \end{aligned}
        \end{equation}

    $\mathbf{H}(h)$:
        The function $h$ is continuous.

    \begin{remark} \label{remark_conditions_comparison}
        The hypotheses $\mathbf{H}(F)$ and $\mathbf{H}(h)$ are the same as in \cite[Section 3]{Bayraktar_Keller_2018} except for the condition $\mathbf{H}(F)$(iii).
        Namely, in \cite[Section 3]{Bayraktar_Keller_2018}, the following condition is considered:
        for every $L \geq 0$ and every $(t_0, x_0) \in [0, T) \times C([0, T], H)$, there exists a modulus of continuity $m_{L, t_0, x_0}$ such that, for every $\varepsilon > 0$, every $t \in [t_0, T]$, and every $x_1$, $x_2 \in \mathcal{X}^L(t_0, x_0)$,
        \begin{equation} \label{condition_F_old}
            \begin{aligned}
                & F \bigl( t, x_1, \varepsilon^{- 1} (x_1(t) - x_2(t)) \bigr)
                - F \bigl( t, x_2, \varepsilon^{- 1} (x_1(t) - x_2(t)) \bigr) \\
                & \quad \leq m_{L, t_0, x_0} \Bigl( \varepsilon^{- 1} |x_1(t) - x_2(t)|^2
                + \|x_1(\cdot \wedge t) - x_2(\cdot \wedge t)\|_\infty \Bigr).
            \end{aligned}
        \end{equation}
        Since \eqref{condition_F_old} implies \eqref{condition_F}, this condition is stronger than $\mathbf{H}(F)$(iii).
    \end{remark}

    Consider the terminal-value problem for the path-dependent Hamilton--Jacobi equation
    \begin{subequations} \label{TVP}
    \begin{equation} \label{HJ}
        \begin{gathered}
        \partial_t u(t, x) - \langle A(t, x(t)), \partial_x u(t, x) \rangle + F(t, x, \partial_x u(t, x))
        = 0, \\
        (t, x) \in [0, T) \times C([0, T], H),
        \end{gathered}
    \end{equation}
    under the right-end boundary condition
    \begin{equation} \label{boundary_condition}
        u(T, x)
        = h(x),
        \quad  x \in C([0, T], H).
    \end{equation}
    \end{subequations}
   Here, $\partial_t u$ and $\partial_x u$ are the path derivatives of the unknown $u$.

\section{Minimax and viscosity solutions}

    In this section, we give definitions of minimax and viscosity solutions of the terminal-value problem \eqref{TVP}.
    We formulate a theorem on the existence and uniqueness of a minimax solution (we give a proof of the theorem in Section \ref{S:proof}, which is preceded by a discussion of the functional chain rule in Section \ref{S:Functional_chain_rule}).
    Then, we prove a result on the equivalence of the definitions of minimax and viscosity solutions.
    As a consequence, we obtain a theorem on the existence and uniqueness of a viscosity solution.
    In addition, we prove a stability theorem for viscosity solutions.

\subsection{Minimax solutions}

    We start with the definition of minimax solutions (cf. \cite[Definition 3.2]{Bayraktar_Keller_2018}).
    Note that we require slightly less regularity compared to the corresponding definition in \cite{Bayraktar_Keller_2018} (see also the regularity requirements for non-smooth solutions in \cite{Jiang_Keller_2025} as well as in \cite{Gomoyunov_2024}).

    \begin{definition} \label{D:minimax_solutions}
        Let $L\ge 0$ and $u \colon [0, T] \times C([0,T],H) \to \mathbb{R}$.

        (i)
            We call $u$ a minimax $L$-subsolution of \eqref{TVP} if
            the restrictions of $u$ to the sets $[t^\ast, T] \times \mathcal{X}^{L^\ast}(t^\ast, x^\ast)$, $t^\ast \in [0, T)$, $x^\ast \in C([0,T], H)$, $L^\ast \geq 0$, are upper semi-continuous,
            $u$ satisfies the inequality $u(T, x) \leq h(x)$ for all $x \in C([0,T],H)$,
            and, for every $(t_0, x_0, z) \in [0, T) \times C([0,T], H) \times H$, there exists an $x \in \mathcal{X}^L(t_0, x_0)$ such that, for every $t \in [t_0, T]$,
            \begin{equation}\label{E:Minimax:L:Subsolution}
                u(t_0, x_0)
                \leq \int_{t_0}^{t} \bigl( (- f^x(s), z) + F(s, x, z) \bigr) \, ds + u(t, x).
            \end{equation}

        (ii)
            We call $u$ a minimax $L$-supersolution of \eqref{TVP} if
            the restrictions of $u$ to the sets $[t^\ast, T] \times \mathcal{X}^{L^\ast}(t^\ast, x^\ast)$, $t^\ast\in [0, T)$, $x^\ast \in C([0,T], H)$, $L^\ast \geq 0$, are lower semi-continuous,
            $u$ satisfies the inequality $u(T, x) \geq h(x)$ for all $x \in C([0,T],H)$, and, for every $(t_0, x_0, z) \in [0, T) \times C([0,T], H) \times H$, there exists an $x \in \mathcal{X}^L(t_0, x_0)$ such that, for every $t \in [t_0, T]$,
            \begin{equation}\label{E:Minimax:L:Supersolution}
                u(t_0, x_0)
                \geq \int_{t_0}^{t} \bigl( (- f^x(s), z) + F(s, x, z) \bigr) \, ds + u(t, x).
            \end{equation}

        {\rm (iii)}
            We call $u$ a minimax $L$-solution of \eqref{TVP} if
            the restrictions of $u$ to the sets $[t^\ast, T] \times \mathcal{X}^{L^\ast}(t^\ast, x^\ast)$, $t^\ast\in [0, T)$, $x^\ast\in C([0,T], H)$, $L^\ast \geq 0$, are continuous,
            $u$ satisfies the equality $u(T, x) = h(x)$ for all $x \in C([0,T], H)$,
            and, for every $(t_0, x_0, z) \in [0, T) \times C([0,T], H) \times H$, there exists an $x \in \mathcal{X}^L(t_0, x_0)$ such that, for every $t \in [t_0, T]$,
            \begin{equation} \label{E:Minimax:L:Solution}
                u(t_0, x_0)
                = \int_{t_0}^{t} \bigl( (- f^x(s), z) + F(s, x, z) \bigr) \, ds + u(t, x).
            \end{equation}

        {\rm (iv)}
            We call $u$ a minimax subsolution (resp. supersolution, resp. solution) of \eqref{TVP} if $u$ is a minimax $L$-subsolution (resp. $L$-supersolution, resp. $L$-solution) of \eqref{TVP} for some $L \geq 0$.
    \end{definition}

    Note that (cf. \cite[Proposition 3.7]{Bayraktar_Keller_2018}), given $L \geq 0$, a function $u \colon [0, T] \times C([0,T],H) \to \mathbb{R}$ is a minimax $L$-solution of \eqref{TVP} if and only if $u$ is a minimax $L$-supersolution as well as a minimax $L$-subsolution of \eqref{TVP}.

    The theorem below is a generalization of \cite[Theorem 5.8]{Bayraktar_Keller_2018} to the case of the weaker condition $\mathbf{H}(F)$ and the larger space $C([0,T], H)$.
    Recall that, in \cite{Bayraktar_Keller_2018}, existence was only established for minimax solutions defined on a smaller set $[0, T] \times \Omega$.
    Here, $\Omega$ is the union of $\Omega^L \coloneq \mathcal{X}^L(0, x_\ast)$ over $L \geq 0$, where $x_\ast \in H$ is fixed and considered as a constant function on $[0, T]$.

    \begin{theorem} \label{theorem_minimax_existence_uniqueness}
        Let $\mathbf{H}(A)$, $\mathbf{H}(F)$, and $\mathbf{H}(h)$ be satisfied.
        Then, there exists a unique minimax solution $u$ of \eqref{TVP}.
        Moreover, $u$ is a minimax $L_0$-solution of \eqref{TVP}, where $L_0 \geq 0$ is the number from $\mathbf{H}(F)${\rm(ii)}.
    \end{theorem}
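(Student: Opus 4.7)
The plan is to follow the five-step scheme of \cite{Bayraktar_Keller_2018}: split the claim into a comparison principle (yielding uniqueness of any minimax solution, and in particular forcing any such solution to coincide with the minimax $L_0$-solution) and an existence statement for a minimax $L_0$-solution. For uniqueness, I would show that whenever $u$ is a minimax subsolution and $v$ is a minimax supersolution of \eqref{TVP}, then $u \leq v$ on $[0,T]\times C([0,T],H)$. For existence, I would build $u$ as the value of an auxiliary object (a suitable lower/upper game value, or equivalently a Markov-type operator associated with the generalized characteristics), argue that it enjoys the required continuity on each slice $[t^\ast,T]\times\mathcal{X}^{L^\ast}(t^\ast,x^\ast)$, and verify the minimax identity \eqref{E:Minimax:L:Solution} along the constant-$z$ characteristic system.

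The heart of the argument is the comparison principle, and this is where the penalty $\Upsilon$ from Zhou is used in place of the quadratic penalty of \cite{Bayraktar_Keller_2018}. Suppose for contradiction that $u(t_0,x_0)-v(t_0,x_0)>0$ for some $(t_0,x_0)$. I would consider, on $[t_0,T]\times \mathcal{X}^{L_1}(t_0,x_0)\times\mathcal{X}^{L_2}(t_0,x_0)$, the doubled functional
\begin{equation*}
    \Phi_{\varepsilon,\delta}(t,x_1,x_2)
    \coloneq u(t,x_1)-v(t,x_2)-\tfrac{1}{\varepsilon}\Upsilon(t,x_1,x_2)-\tfrac{\delta}{T-t},
\end{equation*}
where $\Upsilon$ penalizes path deviations in a way compatible with the variational framework. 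By semicontinuity and compactness of the $\mathcal{X}^L$, $\Phi_{\varepsilon,\delta}$ attains its maximum; by the hypothesized strict sign of $u-v$ at $(t_0,x_0)$, for small $\delta$ and $\varepsilon$ the maximum is attained at some $(t^\ast,x_1^\ast,x_2^\ast)$ with $t^\ast<T$. From the subsolution property applied to $u$ with $z=\tfrac{1}{\varepsilon}\partial_{x}\Upsilon(\cdot,\cdot,x_2^\ast)$, and the supersolution property for $v$ with $z$ obtained from $-\tfrac{1}{\varepsilon}\partial_{x}\Upsilon(x_1^\ast,\cdot,\cdot)$, I would select characteristic paths, feed them back into $\Phi_{\varepsilon,\delta}$, and use the maximality together with the functional chain rule for $\Upsilon$ to produce an integral inequality. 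Monotonicity and coercivity in $\mathbf{H}(A)$ absorb the $A$-terms, while condition $\mathbf{H}(F)$(iii) is precisely tailored so that the $F$-difference is controlled by $\varepsilon^{-1}|x_1(t)-x_2(t)|\cdot\|x_1-x_2\|_\infty$, i.e., exactly the quantity that $\Upsilon/\varepsilon$ dominates. Sending $\varepsilon\downarrow 0$ and then $\delta\downarrow 0$ should yield the desired contradiction.

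For existence on the entire path space, I would define $u(t_0,x_0)$ as the appropriate min-max over families of characteristic paths in $\mathcal{X}^{L_0}(t_0,x_0)$ and constant directions $z\in H$. The continuity of the restriction of $u$ to each $[t^\ast,T]\times\mathcal{X}^{L^\ast}(t^\ast,x^\ast)$ follows from compactness of these sets in $C([0,T],H)$, continuity of $F,h$, and stability of the characteristic inclusion with respect to initial data. The boundary condition $u(T,x)=h(x)$ is built in, and a standard dynamic programming identity along constant-$z$ characteristics yields both the sub- and supersolution inequalities with the same $L_0$, hence the $L_0$-solution property; the extension from the subset considered in \cite{Bayraktar_Keller_2018} to all of $C([0,T],H)$ is automatic because the construction never required global compactness, only slicewise compactness.

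I expect the main technical obstacle to be not the comparison scheme itself but verifying that $\Upsilon$ carries enough regularity to justify the functional chain rule in the Gelfand-triple variational setting, i.e., that $\partial_x \Upsilon$ takes values in $V$ along absolutely continuous $V$-valued trajectories and that the pairing $\langle x'(s),\partial_x\Upsilon\rangle$ makes sense and is integrable. In infinite dimensions, $\Upsilon$ constructed from $H$-norms need not automatically be differentiable in the sense required by $\mathcal{C}_V^{1,1}$, and a singular component must be handled by developing the coinvariant derivative framework; establishing this smoothness, together with the continuity of the derivatives into $V$, is where most of the new work lies relative to \cite{Bayraktar_Keller_2018}.
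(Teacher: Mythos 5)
Your comparison argument is essentially the paper's: the penalized doubled functional built from $\Upsilon$ on the compact set $\mathcal{X}^L(t_0,x_0)\times\mathcal{X}^L(t_0,x_0)$, the use of monotonicity of $A$ to absorb the operator terms, the role of $\mathbf{H}(F)$(iii) in controlling the $F$-difference by $\varepsilon^{-1}|x_1(t)-x_2(t)|\,\|x_1-x_2\|_\infty$, and the identification of the $\mathcal{C}^{1,1}_V$-regularity of $\Upsilon$ (via coinvariant derivatives) as the main technical obstacle all match Lemma \ref{L:DoubledComparison} and Section \ref{S:Functional_chain_rule}. Two cosmetic differences: the paper phrases the comparison for a doubled viscosity subsolution $w(t,x,y)$ (combined with the doubling theorem of \cite{Bayraktar_Keller_2018}) rather than directly for a pair $(u,v)$, and it uses the term $M_0(T-t)/(2(T-t_0))$ rather than $\delta/(T-t)$ to force the maximizer away from $t=T$; neither affects correctness.

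The gap is in your existence argument. First, the construction itself is left as a black box: the paper does not define $u$ by an explicit min-max over characteristic paths and directions $z$, but obtains local existence on each slice $[t^\ast,T]\times\mathcal{X}^L(t^\ast,x^\ast)$ by rerunning Perron's method from \cite{Bayraktar_Keller_2018} (semi-continuous envelopes plus the stability result for the sets $\mathcal{X}^L$), which in turn needs the comparison principle you proved; asserting that "a standard dynamic programming identity yields both the sub- and supersolution inequalities" for a direct min-max formula is precisely the step that Perron's method is there to replace, and you give no argument for it. Second, and more importantly, the extension from the slices to all of $C([0,T],H)$ is \emph{not} automatic: the global function must be defined pointwise by $u(t^\ast,x^\ast)\coloneq u^{L_0}_{t^\ast,x^\ast}(t^\ast,x^\ast)$, i.e., by evaluating a \emph{different} local solution at each base point, and one must then prove the consistency relations $u(t,x)=u^{L}_{t^\ast,x^\ast}(t,x)$ for all $(t,x)\in[t^\ast,T)\times\mathcal{X}^{L}(t^\ast,x^\ast)$ and $u^{L_2}_{t^\ast,x^\ast}=u^{L_1}_{t^\ast,x^\ast}$ on $\mathcal{X}^{L_1}(t^\ast,x^\ast)$ for $L_2\ge L_1\ge L_0$ before the regularity, terminal, and interior conditions of Definition \ref{D:minimax_solutions}(iii) can be verified for $u$. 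These consistency statements rest on local uniqueness (hence again on the comparison result) together with the restriction property of local solutions (Remark \ref{R:restrictionMinimax}, Lemma \ref{L:localExistence}, Corollary \ref{Cor:localExistence}); this gluing is exactly the new work the paper does to get existence on the whole path space rather than on $[0,T]\times\Omega$, and your proposal skips it.
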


    \begin{remark}\label{R:existenceComparedToBK18}
        Note that the existence result in  \cite{Bayraktar_Keller_2018} for minimax solutions on $[0, T] \times \Omega$ relied on showing that semi-continuous envelopes of a suitable function are minimax sub/supersolutions.
        In particular, the compactness of $\mathcal{X}^L(0, x_\ast)$ was crucial to establish
        the infinite-dimensional counterpart \cite[Proposition 2.12]{Bayraktar_Keller_2018}
        of the stability result \cite[Proposition 4.2]{Lukoyanov_2003a}, which was needed for establishing the mentioned result for the semi-continuous envelopes.
    \end{remark}

    Similarly to the proof of \cite[Theorem 5.8]{Bayraktar_Keller_2018}, the proof of Theorem \ref{theorem_minimax_existence_uniqueness} is carried out according to the ``five-step-scheme'' described in \cite[Section 1.3]{Bayraktar_Keller_2018}.
    Note that the only step where the condition $\mathbf{H}(F)$(iii) is used is the first step, called ``doubled comparison'' and precisely formulated in \cite[Theorem 4.2]{Bayraktar_Keller_2018}, and the only step where the use of the larger space $C([0,T], H)$ in contrast to $\Omega$ (see Remark \ref{R:existenceComparedToBK18}) matters is the fourth step, called ``Perron'' (see \cite[Section 5]{Bayraktar_Keller_2018}).
    Thus, these are the only facts that we should check under our more general conditions.
    This is dealt with in Section \ref{S:proof}.

    We also need an infinitesimal formulation for minimax $L$-subsolutions and $L$-super\-solutions.
    \begin{proposition} \label{P:Minimax:Infinitesimal}
        Fix a number $L \geq 0$
        and a function $u \colon [0, T] \times C([0,T],H) \to \mathbb{R}$.
        Then, the following statements hold:

        {\rm (i)}
            $u$ is a minimax $L$-subsolution of \eqref{TVP} if and only if
            the restrictions of $u$ to the sets $[t^\ast, T] \times \mathcal{X}^{L^\ast}(t^\ast, x^\ast)$, $t^\ast\in [0, T)$, $x^\ast\in C([0, T], H)$, $L^\ast \geq 0$, are upper semi-continuous,
            $u$ satisfies the inequality $u(T, x) \leq h(x)$ for all $x \in C([0, T], H)$,
            and, for every $(t_0, x_0) \in [0, T) \times C([0, T], H)$ and every $z \in H$,
            \begin{equation} \label{subsolution_infinitesimal}
                \begin{aligned}
                    \lim_{\delta \to 0^+}
                    \sup \biggl\{
                    & \frac{1}{t - t_0}
                    \biggl( u(t, x) - u(t_0, x_0)
                    + \int_{t_0}^{t} (- f^x(s), z) \, ds \biggr) \colon \\
                    & \quad
                    t \in (t_0, t_0 + \delta], \, x \in \mathcal{X}^L(t_0, x_0) \biggr\}
                    + F(t_0, x_0, z)
                    \geq 0.
                \end{aligned}
            \end{equation}

        {\rm (ii)}
            $u$ is a minimax $L$-supersolution of \eqref{TVP}  if and only if
            the restrictions of $u$ to the sets $[t^\ast, T] \times \mathcal{X}^{L^\ast}(t^\ast, x^\ast)$, $t^\ast\in [0, T)$, $x^\ast \in C([0, T], H)$, $L^\ast \geq 0$, are lower semi-continuous,
            $u$ satisfies the inequality $u(T, x) \geq h(x)$ for all $x \in C([0, T], H)$,
            and, for every $(t_0, x_0) \in [0, T) \times C([0,T],H)$ and every $z \in H$,
            \begin{equation} \label{supersolution_infinitesimal}
                \begin{aligned}
                    \lim_{\delta \to 0^+}
                    \inf \biggl\{
                    & \frac{1}{t - t_0}
                    \biggl( u(t, x) - u(t_0, x_0)
                    + \int_{t_0}^{t} (- f^x(s), z) \, ds \biggr) \colon \\
                    & \quad
                    t \in (t_0, t_0 + \delta], \, x \in \mathcal{X}^L(t_0, x_0) \biggr\}
                    + F(t_0, x_0, z)
                    \leq 0.
                \end{aligned}
            \end{equation}
    \end{proposition}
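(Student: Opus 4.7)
The plan is to prove statement (i); statement (ii) is symmetric (replace $\sup$ by $\inf$, reverse inequalities, and swap upper for lower semi-continuity). The regularity requirements and the terminal-value inequality are identical in both formulations, so it suffices to show, at each fixed $(t_0, x_0, z) \in [0, T) \times C([0, T], H) \times H$, the equivalence of (a) the existence of $x \in \mathcal{X}^L(t_0, x_0)$ satisfying \eqref{E:Minimax:L:Subsolution} for every $t \in [t_0, T]$ and (b) the infinitesimal inequality \eqref{subsolution_infinitesimal}.

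The direction (a) $\Rightarrow$ (b) is routine: inserting the specific $x$ furnished by Definition \ref{D:minimax_solutions}(i) into the supremum in \eqref{subsolution_infinitesimal} and rearranging \eqref{E:Minimax:L:Subsolution} yields
\begin{equation*}
    \frac{1}{t-t_0}\!\left(u(t, x) - u(t_0, x_0) + \int_{t_0}^t (-f^x(s), z)\,ds\right) \geq -\frac{1}{t - t_0}\int_{t_0}^t F(s, x, z)\,ds,
\end{equation*}
whose right-hand side tends to $-F(t_0, x_0, z)$ as $t \to t_0^+$ by continuity of $F$ (using $x(\cdot) = x_0(\cdot)$ on $[0, t_0]$). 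Taking the supremum over $t \in (t_0, t_0 + \delta]$ and then $\delta \to 0^+$ gives \eqref{subsolution_infinitesimal}.

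For the converse (b) $\Rightarrow$ (a), I plan to carry out a Zorn-type exhaustion argument standard in the theory of minimax solutions. Let $\Theta$ denote the set of pairs $(\tau, x) \in [t_0, T] \times \mathcal{X}^L(t_0, x_0)$ for which \eqref{E:Minimax:L:Subsolution} holds for every $t \in [t_0, \tau]$, equipped with the partial order $(\tau_1, x_1) \preceq (\tau_2, x_2)$ iff $\tau_1 \leq \tau_2$ and $x_1 \equiv x_2$ on $[0, \tau_1]$. Given a chain $\{(\tau_\alpha, x_\alpha)\}$, set $\tau^\star \coloneq \sup_\alpha \tau_\alpha$; the $x_\alpha$ define a coherent common prefix on $[0, \tau^\star]$, which extends to an element of $\mathcal{X}^L(t_0, x_0)$ via \cite[Proposition 2.10]{Bayraktar_Keller_2018}; the limit inequality at $\tau^\star$ is preserved because $u$ is upper semi-continuous on the compact set $[t_0, T] \times \mathcal{X}^L(t_0, x_0)$, $F$ is continuous, and the $f^x$-integrals behave well thanks to the $L$-bound in Definition 2.1. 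Zorn's lemma thus yields a maximal element $(\tau^\star, x^\star) \in \Theta$. To finish, I would show $\tau^\star = T$: if $\tau^\star < T$, applying \eqref{subsolution_infinitesimal} at $(\tau^\star, x^\star)$ produces sequences $t_n \downarrow \tau^\star$ and $y_n \in \mathcal{X}^L(\tau^\star, x^\star)$ with the corresponding quotient plus $F(\tau^\star, x^\star, z)$ bounded below by $-1/n$; a subsequence with $y_n \to y^\star$ in $C([0,T], H)$ exists by compactness, and continuity of $F$ permits the replacement of the constant $F(\tau^\star, x^\star, z)$ by the integrand $F(s, y_n, z)$ at the cost of an $o(t_n - \tau^\star)$ error. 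Concatenating $x^\star$ on $[0, \tau^\star]$ with $y_n$ on $[\tau^\star, T]$ then yields an element of $\Theta$ strictly exceeding $(\tau^\star, x^\star)$, contradicting maximality.

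The main technical obstacle lies precisely in this last step: converting the single-time infinitesimal inequality, which carries the constant value $F(\tau^\star, x^\star, z)$, into the integral condition with the $s$-dependent integrand $F(s, y_n, z)$. Bridging the gap demands uniform continuity of $F$ along a compact family of trajectories and a diagonal argument coordinating the infinitesimal tolerance $1/n$ with the continuity modulus of $F$; a secondary point is verifying that concatenations of elements of $\mathcal{X}^L$ at intermediate times remain in $\mathcal{X}^L$, which follows directly from the piecewise definition of the dynamics and the $L$-bound on $f^x$.
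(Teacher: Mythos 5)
Your forward direction is fine and matches the paper. The converse, however, has a genuine gap at the maximality step, and it is not the one you flag. The infinitesimal condition \eqref{subsolution_infinitesimal} is a limit statement: for each $n$ it only furnishes $t_n \downarrow \tau^\star$ and $y_n \in \mathcal{X}^L(\tau^\star, x^\star)$ whose difference quotient exceeds $-F(\tau^\star, x^\star, z) - 1/n$. After converting the constant $F(\tau^\star, x^\star, z)$ into the integrand $F(s, y_n, z)$ (which, as you say, costs only $o(t_n - \tau^\star)$), the concatenated path satisfies \eqref{E:Minimax:L:Subsolution} only up to the additive error $\tfrac{1}{n}(t_n - \tau^\star) + o(t_n - \tau^\star) > 0$. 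It is therefore \emph{not} an element of your set $\Theta$, and you cannot remove the error by letting $n \to \infty$ because the extension times $t_n$ collapse back to $\tau^\star$. So no contradiction with maximality is obtained. A secondary defect: membership in $\Theta$ requires \eqref{E:Minimax:L:Subsolution} at \emph{every} $t \in [t_0, \tau]$, whereas your extension controls only the endpoint $t_n$ and says nothing about $t \in (\tau^\star, t_n)$.

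Both defects are why the paper argues contrapositively. Negating the minimax property and invoking \cite[Lemma 3.6]{Bayraktar_Keller_2018} produces a single time $t_1$ and, by compactness and semicontinuity, a uniform slack $\tilde{\delta} > 0$ in the violated inequality; this slack is then distributed linearly in time as the budget $\tfrac{t - t_0}{t_1 - t_0}\tilde{\delta}$, which dominates the $o(\delta) + \varepsilon\delta$ errors produced by the infinitesimal extensions and makes the supremum-of-times argument close. (The pointwise-in-$t$ minimum over trajectories in the paper's definition of $\tilde{t}$ also sidesteps the intermediate-time issue.) To repair your version without contradiction, you would have to replace $\Theta$ by an $\varepsilon$-relaxed family $\Theta_\varepsilon$ (inequality with slack $\varepsilon(t - t_0)$), run the exhaustion for each $\varepsilon > 0$ to reach $\tau^\star = T$, and then pass to the limit $\varepsilon \to 0^+$ using compactness of $\mathcal{X}^L(t_0, x_0)$, upper semicontinuity of $u$, weak convergence of $f^{x_\varepsilon}$, and continuity of $F$ — none of which is in your writeup. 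Incidentally, Zorn's lemma is unnecessary here; the chain structure is just the time axis and a supremum argument suffices.
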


    As $F$ is continuous (see $\mathbf{H}(F)$(i)) and the sets $\mathcal{X}^L(t_0, x_0)$ are compact,
    the proof almost completely repeats that of \cite[Proposition 3.8]{Bayraktar_Keller_2018} (cf. also \cite[Theorem 8.1]{Lukoyanov_2003a} for a very similar result in the finite-dimensional case).
    For the reader's convenience, we provide the proof in \ref{S:proof_P:Minimax:Infinitesimal}.

    Note that there are some similarities of the values used in \eqref{subsolution_infinitesimal} and \eqref{supersolution_infinitesimal} to certain directional derivatives such as \cite[Definition 5.1]{Keller_2024} and \cite[equation (4.32)]{Gomoyunov_Plaksin_2023}.

\subsection{Viscosity solutions}

    We proceed with the definition of viscosity solutions.

    For every $z \in V$, $L \geq 0$, $t_0 \in [0, T)$, and $x_0 \in C([0, T], H)$, define a function $\tilde{\varphi}^{z, L, t_0, x_0} \colon [t_0, T] \times \mathcal{X}^L(t_0,x_0) \to \mathbb{R}$ by
    \begin{equation} \label{E:tilde:varphi}
        \tilde{\varphi}^{z, L, t_0, x_0}(t, x)
        \coloneq \int_{t_0}^t \langle A(s, x(s)), z \rangle \, ds,
        \quad t \in [t_0, T], \, x \in \mathcal{X}^L(t_0, x_0).
    \end{equation}
    In what follows, if $L$, $t_0$, and $x_0$ are fixed, we use the shorthand notation $\tilde{\varphi}^z$ instead of $\tilde{\varphi}^{z, L, t_0, x_0}$.

    Given a number $L \geq 0$, a function $u \colon [0, T] \times C([0, T], H) \to \mathbb{R}$, and a point $(t_0, x_0) \in [0, T) \times C([0,T],H)$, put
    \begin{align*}
        \overline{\mathcal{A}}^L u(t_0, x_0)
        \coloneq \Bigl\{
        & (\varphi, z) \in \mathcal{C}_V^{1, 1} ([t_0, T] \times C([0, T], H)) \times V \colon
        \exists T_0 \in (t_0, T] \\
        & \quad 0 = (\varphi + \tilde{\varphi}^z - u)(t_0, x_0)
        = \sup_{(t, x) \in [t_0, T_0] \times \mathcal{X}^L (t_0, x_0)}
        (\varphi + \tilde{\varphi}^z - u)(t, x) \Bigr\}, \\
        \underline{\mathcal{A}}^L u(t_0, x_0)
        \coloneq \Bigl\{
        & (\varphi, z) \in \mathcal{C}_V^{1, 1} ([t_0, T] \times C([0, T], H)) \times V \colon \exists T_0 \in (t_0, T] \\
        & \quad 0 = (\varphi + \tilde{\varphi}^z - u)(t_0, x_0)
        = \inf_{(t, x) \in [t_0, T_0] \times \mathcal{X}^L(t_0, x_0)}
        (\varphi + \tilde{\varphi}^z - u)(t, x) \Bigr\}.
    \end{align*}
    Note that, given $(\varphi, z) \in \overline{\mathcal{A}}^L\,u(t_0, x_0)$ (or $(\varphi, z) \in \underline{\mathcal{A}}^L\,u(t_0,x_0)$), the function $(t, x) \mapsto (\varphi + \tilde{\varphi}^z)(t, x)$,
    $[t_0, T] \times \mathcal{X}^L(t_0,x_0) \to \mathbb{R}$, serves as a test function with $\varphi$ being its smooth and $\tilde{\varphi}^z$ being its less smooth part.

    \begin{definition} \label{definition_viscosity}
        Let $L \geq 0$ and $u \colon [0, T] \times C([0, T], H) \to \mathbb{R}$.

        (i)
            We call $u$ a viscosity $L$-subsolution of \eqref{TVP} if
            the restrictions of $u$ to the sets $[t^\ast, T] \times \mathcal{X}^{L^\ast}(t^\ast, x^\ast)$, $t^\ast \in [0, T)$, $x^\ast \in C([0, T], H)$, $L^\ast \geq 0$, are
            upper semi-continuous,
            $u$ satisfies the inequality $u(T, x) \leq h(x)$ for all $x \in C([0, T], H)$,
            and, for every $(t_0, x_0) \in [0, T) \times C([0,T],H)$ and every $(\varphi, z) \in \underline{\mathcal{A}}^L u(t_0, x_0)$,
            \begin{equation} \label{definition_viscosity_subsolution}
                \begin{aligned}
                    & \partial_t \varphi(t_0, x_0)
                    + \limsup_{\delta \to 0^+} \sup_{x \in \mathcal{X}^L (t_0, x_0)}
                    \frac{1}{\delta} \int_{t_0}^{t_0 + \delta}
                    \bigl( \partial_t \tilde{\varphi}^z (s, x)
                    - \langle A(s, x(s)), \partial_x \varphi(s, x) \rangle \bigr) \, ds \\
                    & \quad + F(t_0, x_0, \partial_x \varphi(t_0, x_0))
                    \geq 0.
                \end{aligned}
            \end{equation}

        (ii)
            We call $u$ a viscosity $L$-supersolution of \eqref{TVP} if
            the restrictions of $u$ to the sets $[t^\ast, T] \times \mathcal{X}^{L^\ast}(t^\ast, x^\ast)$, $t^\ast \in [0, T)$, $x^\ast \in C([0, T], H)$, $L^\ast \ge 0$, are lower semi-continuous,
            $u$ satisfies the inequality $u(T, x) \geq h(x)$ for all $x \in C([0, T], H)$,
            and, for every $(t_0, x_0) \in [0, T) \times C([0, T], H)$ and every $(\varphi, z) \in \overline{\mathcal{A}}^L u(t_0, x_0)$,
            \begin{equation} \label{definition_viscosity_supersolution}
                \begin{aligned}
                    & \partial_t \varphi(t_0, x_0)
                    + \liminf_{\delta \to 0^+} \inf_{x \in \mathcal{X}^L(t_0, x_0)}
                    \frac{1}{\delta} \int_{t_0}^{t_0 + \delta}
                    \bigl( \partial_t \tilde{\varphi}^z (s, x)
                    - \langle A(s, x(s)), \partial_x \varphi(s, x) \rangle \bigr) \, ds \\
                    & \quad + F(t_0, x_0, \partial_x \varphi(t_0, x_0))
                    \leq 0.
                \end{aligned}
            \end{equation}

        (iii)
            We call $u$ a viscosity $L$-solution of \eqref{TVP} if $u$ is a viscosity $L$-supersolution as well as a viscosity $L$-subsolution of \eqref{TVP}.

        (iv)
            We call $u$ a viscosity subsolution (resp. supersolution, resp. solution) of \eqref{TVP} if $u$ is a viscosity $L$-subsolution (resp. $L$-supersolution, resp. $L$-solution) of \eqref{TVP} for some $L \geq 0$.
    \end{definition}

    In \eqref{definition_viscosity_subsolution} and \eqref{definition_viscosity_supersolution}, with slight abuse of notation, we write $\partial_t \tilde{\varphi}^z (s, x) \coloneq \langle A(s, x(s)), z \rangle$.

    Note also that there is some similarity of the given definition to \cite[Definition 5.1]{Zhou_Touzi_Zhang_2024}.

    The following result shows that the notions of minimax and viscosity $L$-supersolutions and $L$-subsolutions are equivalent (see \cite[Theorem 2]{Lukoyanov_2006} for a counterpart of this result in the finite-dimensional case and also \cite[Theorems 6.9 and 6.10]{Keller_2024} as well as \cite[Theorems 4.16 and 4.17]{Jiang_Keller_2025} for similar results concerning relationships between viscosity solutions and so-called quasi-contingent solutions).

    \begin{theorem} \label{theorem_equivalence}
        Let $\mathbf{H}(A)$, $\mathbf{H}(F)${\rm(i)}, and $\mathbf{H}(h)$ be satisfied.
        Fix a number $L \geq 0$ and a function $u \colon [0, T] \times C([0,T],H) \to \mathbb{R}$.
        Then, the following statements hold:

        {\rm (i)}
            the function $u$ is a minimax $L$-supersolution of \eqref{TVP} if and only if $u$ is a viscosity $L$-supersolution of \eqref{TVP};

        {\rm (ii)}
            the function $u$ is a minimax $L$-subsolution of \eqref{TVP} if and only if $u$ is a viscosity $L$-subsolution of \eqref{TVP}.
    \end{theorem}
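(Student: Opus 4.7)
The plan is to prove only part~(i) (the supersolution case), since (ii) follows by exchanging infima with suprema and reversing inequalities throughout, and since the semicontinuity and terminal conditions are identical in both definitions. In the forward direction I will appeal to the original Definition~\ref{D:minimax_solutions}(ii) rather than its infinitesimal reformulation, whereas in the converse I will invoke Proposition~\ref{P:Minimax:Infinitesimal}(ii).

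For minimax $\Rightarrow$ viscosity, fix $(\varphi, z) \in \overline{\mathcal{A}}^L u(t_0, x_0)$ with associated $T_0 \in (t_0, T]$. The minimax $L$-supersolution property applied with the vector $z^\prime := \partial_x \varphi(t_0, x_0) \in H$ yields a single $x^\ast \in \mathcal{X}^L(t_0, x_0)$ satisfying \eqref{E:Minimax:L:Supersolution} for every $t \in [t_0, T]$, and combining this with the test-function maximum condition $u(t, x^\ast) - u(t_0, x_0) \geq \varphi(t, x^\ast) - \varphi(t_0, x_0) + \tilde\varphi^z(t, x^\ast)$ leads to
\begin{equation*}
\varphi(t, x^\ast) - \varphi(t_0, x_0) + \tilde\varphi^z(t, x^\ast) + \int_{t_0}^t \bigl( (-f^{x^\ast}(s), z^\prime) + F(s, x^\ast, z^\prime) \bigr) \, ds \leq 0.
\end{equation*}
Expanding $\varphi(t, x^\ast) - \varphi(t_0, x_0)$ via the functional chain rule, substituting the evolution equation for $x^\ast$ in $V^\ast$, and using $\langle h, v\rangle = (h, v)$ for $h \in H$, $v \in V$, reduces this to
\begin{equation*}
\int_{t_0}^t \Bigl[ \partial_t \varphi(s, x^\ast) + \bigl( f^{x^\ast}(s), \partial_x \varphi(s, x^\ast) - z^\prime \bigr) + \langle A(s, x^\ast(s)), z - \partial_x \varphi(s, x^\ast) \rangle + F(s, x^\ast, z^\prime) \Bigr] \, ds \leq 0.
\end{equation*}
Setting $t = t_0 + \delta$, dividing by $\delta$, and letting $\delta \to 0^+$, continuity of $\partial_t \varphi$, $\partial_x \varphi$, and $F$ together with the uniform bound on $|f^{x^\ast}|$ over the compact $\mathcal{X}^L(t_0, x_0)$ force the first, second, and fourth summands to contribute $\partial_t \varphi(t_0, x_0)$, $0$, and $F(t_0, x_0, z^\prime)$. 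Since $x^\ast$ is a specific admissible competitor for the infimum over $\mathcal{X}^L(t_0, x_0)$ in the third summand, passing to $\liminf_{\delta \to 0^+}$ delivers \eqref{definition_viscosity_supersolution}.

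For viscosity $\Rightarrow$ minimax, assume the minimax property fails. Proposition~\ref{P:Minimax:Infinitesimal}(ii) then furnishes $(t_0, x_0) \in [0, T) \times C([0, T], H)$, $z_0 \in H$, and $\eta, \delta_0 > 0$ with
\begin{equation*}
u(t, x) - u(t_0, x_0) + \int_{t_0}^t (-f^x(s), z_0) \, ds \geq \bigl( -F(t_0, x_0, z_0) + \eta \bigr) (t - t_0)
\end{equation*}
for every $t \in (t_0, t_0 + \delta_0]$ and $x \in \mathcal{X}^L(t_0, x_0)$. Using density of $V$ in $H$, choose $z_\varepsilon \in V$ with $|z_\varepsilon - z_0|$ small enough to absorb the residual $\int_{t_0}^t (-f^x(s), z_0 - z_\varepsilon) \, ds$ via the uniform $|f^x|$-bound on $\mathcal{X}^L(t_0, x_0)$, set $c := -F(t_0, x_0, z_0) + \eta/4$, and introduce
\begin{equation*}
\varphi(t, x) := \bigl( x(t) - x_0(t_0), z_\varepsilon \bigr) + c(t - t_0) + u(t_0, x_0),
\end{equation*}
which lies in $\mathcal{C}_V^{1, 1}$ with $\partial_t \varphi \equiv c$ and $\partial_x \varphi \equiv z_\varepsilon \in V$. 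The decisive cancellation, which follows from $A(s, x(s)) = f^x(s) - x^\prime(s)$ in $V^\ast$, is
\begin{equation*}
(\varphi + \tilde\varphi^{z_\varepsilon})(t, x) = c(t - t_0) + u(t_0, x_0) + \int_{t_0}^t \bigl( f^x(s), z_\varepsilon \bigr) \, ds,
\end{equation*}
so the reversed minimax estimate yields $(\varphi + \tilde\varphi^{z_\varepsilon} - u)(t, x) \leq -\frac{\eta}{4}(t - t_0) \leq 0$ on $[t_0, t_0 + \delta_0] \times \mathcal{X}^L(t_0, x_0)$, certifying $(\varphi, z_\varepsilon) \in \overline{\mathcal{A}}^L u(t_0, x_0)$. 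Since $\partial_x \varphi \equiv z_\varepsilon$, the integrand in \eqref{definition_viscosity_supersolution} vanishes identically, reducing the viscosity inequality to $c + F(t_0, x_0, z_\varepsilon) \leq 0$; sending $\varepsilon \to 0$ with continuity of $F$ forces $\eta/4 \leq 0$, a contradiction.

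The principal obstacle is the converse direction: producing a $\varphi \in \mathcal{C}_V^{1, 1}$ whose combination with the singular part $\tilde\varphi^{z_\varepsilon}$ neutralizes the unbounded contribution $(x(t) - x_0(t_0), z_\varepsilon)$. Both the identity $\int_{t_0}^t \langle x^\prime(s), z_\varepsilon \rangle \, ds = (x(t) - x_0(t_0), z_\varepsilon)$ and the usage of $A(s, x(s)) = f^x(s) - x^\prime(s)$ in $V^\ast$ require $z_\varepsilon \in V$, while density of $V$ in $H$ is essential for approximating the given $z_0 \in H$ by an admissible viscosity test vector. This is the only point in the proof where the discrepancy between $H$-valued $z$'s (minimax infinitesimal form) and $V$-valued $z$'s (viscosity definition) must be bridged.
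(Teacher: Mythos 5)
Your proposal is correct and follows essentially the same route as the paper's proof: the forward direction combines the integral form of Definition~\ref{D:minimax_solutions}(ii) applied with $z' = \partial_x\varphi(t_0,x_0)$, the test-function inequality, and the functional chain rule, while the converse argues by contradiction from Proposition~\ref{P:Minimax:Infinitesimal}(ii), approximates $z_0 \in H$ by $z_\varepsilon \in V$, and uses the affine test function $\varphi(t,x) = (x(t)-x_0(t_0),z_\varepsilon) + c(t-t_0) + u(t_0,x_0)$ so that $\varphi + \tilde\varphi^{z_\varepsilon}$ reduces to $\int_{t_0}^t (f^x(s),z_\varepsilon)\,ds$ up to affine terms. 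The only cosmetic difference is that the paper absorbs the $F(t_0,x_0,z_0)$-versus-$F(t_0,x_0,z)$ discrepancy into the initial choice of $z \in V$, whereas you carry it to the end and pass to the limit $z_\varepsilon \to z_0$; both are valid.
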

    \begin{proof}
        We prove only part (i), since the proof for (ii) is similar.

        First, we let $u$ be a minimax $L$-supersolution of \eqref{TVP} and show that $u$ is a viscosity $L$-supersolution of \eqref{TVP}.
        To this end, fix $(t_0, x_0) \in [0, T) \times C([0, T], H)$ and $(\varphi, z) \in \overline{\mathcal{A}}^L u(t_0, x_0)$.
        The latter means that there is a number $T_0 \in (t_0, T]$ such that, for every $(t, x) \in [t_0, T_0] \times \mathcal{X}^L(t_0, x_0)$,
        \begin{equation*}
            u(t, x) - u(t_0, x_0)
            \geq (\varphi + \tilde{\varphi}^z)(t, x) - (\varphi + \tilde{\varphi}^z)(t_0, x_0).
        \end{equation*}
        Put $z_0 \coloneq \partial_x \varphi(t_0, x_0)$ and note that $z_0 \in H$.
        As $u$ is a minimax $L$-supersolution, there is an $x_1 \in \mathcal{X}^L(t_0, x_0)$, such
        that, for every $t \in [t_0, T]$,
        \begin{equation*}
            u(t, x_1) - u(t_0, x_0)
            \leq \int_{t_0}^t \bigl( (f^{x_1}(s), z_0) - F(s, x_1, z_0) \bigr) \, ds.
        \end{equation*}
        For every $t \in [t_0, T_0]$, since
        \begin{equation*}
            (\varphi + \tilde{\varphi}^z)(t, x_1)
            - (\varphi + \tilde{\varphi}^z)(t_0, x_0)
            \leq \int_{t_0}^t \bigl( (f^{x_1}(s),z_0) - F(s,x_1,z_0) \bigr) \, ds,
        \end{equation*}
        then, using the functional chain rule and recalling the definitions of $\tilde{\varphi}^z$ and $f^{x_1}$, we obtain
        \begin{align*}
            & \int_{t_0}^t \bigl( \partial_t \varphi(s, x_1)
            + (\partial_x \varphi(s, x_1), f^{x_1}(s))
            - \langle A(s, x_1(s), \partial_x \varphi(s, x_1) \rangle
            + \langle A(s, x_1(s)), z \rangle \bigr) \, ds \\
            & \quad \leq \int_{t_0}^t \bigl( (f^{x_1}(s), z_0) - F(s, x_1, z_0) \bigr) \, ds,
        \end{align*}
        which is equivalent to
        \begin{align*}
            & \int_{t_0}^t \bigl( \partial_t \varphi(s, x_1)
            + \partial_t \tilde{\varphi}^z (s, x_1)
            - \langle A(s, x_1(s)), \partial_x \varphi(s, x_1) \rangle
            + F(s, x_1, \partial_x \varphi(s,x_1)) \bigr) \, ds \\
            & \quad \leq \int_{t_0}^t \bigl( (f^{x_1}(s), z_0 - \partial_x \varphi(s, x_1))
            + F(s, x_1, \partial_x \varphi(s, x_1)) - F(s, x_1, z_0) \bigr) \, ds.
        \end{align*}
        Consequently, thanks to continuity of $F$, $\partial_t \varphi$, and $\partial_x \varphi$, and the choice of $z_0$,
        \begin{align*}
            & \partial_t \varphi(t_0, x_0)
            + \liminf_{\delta \to 0^+} \inf_{x \in \mathcal{X}^L (t_0,x_0)}
            \frac{1}{\delta} \int_{t_0}^{t_0 + \delta}
            \bigl( \partial_t \tilde{\varphi}^z(s, x)
            - \langle A(s, x(s)), \partial_x \varphi(s, x) \rangle \bigr) \, ds \\
            & \qquad + F(t_0, x_0, \partial_x \varphi(t_0, x_0))\\
            & \quad \leq \partial_t \varphi(t_0, x_0)
            + \liminf_{\delta \to 0^+}  \frac{1}{\delta} \int_{t_0}^{t_0 + \delta}
            \bigl( \partial_t \tilde{\varphi}^z(s, x_1)
            - \langle A(s, x_1(s)), \partial_x \varphi(s, x_1) \rangle \bigr) \, ds \\
            & \qquad + F(t_0, x_0, \partial_x \varphi(t_0, x_0)) \\
            & \quad \leq \liminf_{\delta \to 0^+} \frac{1}{\delta} \int_{t_0}^{t_0 + \delta}
            \bigl( \partial_t \varphi(s, x_1) + \partial_t \tilde{\varphi}^z(s, x_1)
            - \langle A(s, x_1(s)), \partial_x \varphi(s, x_1) \rangle \\
            & \qquad + F(s, x_1, \partial_x \varphi(s, x_1)) \bigr) \, ds \\
            & \quad \leq \lim_{\delta \to 0^+} \frac{1}{\delta} \int_{t_0}^{t_0 + \delta}
            \bigl( (f^{x_1}(s), z_0 - \partial_x \varphi(s, x_1))
            + F(s, x_1, \partial_x \varphi(s, x_1)) - F(s, x_1, z_0) \bigr) \, ds \\
            & \quad = 0.
        \end{align*}
        As a result, we conclude that $u$ is a viscosity $L$-supersolution of \eqref{TVP}.

        Now, we let $u$ be a viscosity $L$-supersolution of \eqref{TVP}  and show that
         $u$ is a minimax $L$-supersolution of \eqref{TVP}.
        For the sake of a contradiction, taking Proposition \ref{P:Minimax:Infinitesimal} into account, assume that there is a triple $(t_0, x_0, z_0) \in [0, T) \times C([0,T],H) \times H$ and there are numbers $T_0 \in (t_0, T)$ and $c > 0$ such that,
        for all $(t, x) \in (t_0, T_0] \times \mathcal{X}^L(t_0, x_0)$,
        \begin{equation*}
            u(t, x) - u(t_0, x_0)
            + \int_{t_0}^{t} (- f^x(s), z_0) \, ds
            + F(t_0, x_0, z_0) (t - t_0)
            > 2 c (t - t_0).
        \end{equation*}
        Since $V$ is dense in $H$ and $F$ is continuous, there exists a $z \in V$ such that, for all $(t, x) \in (t_0, T_0] \times \mathcal{X}^L(t_0, x_0)$,
        \begin{equation*}
            u(t, x) - u(t_0, x_0)
            + \int_{t_0}^{t} (- f^x(s), z) \, ds
            + F(t_0, x_0, z) (t - t_0)
            > c (t - t_0)
        \end{equation*}
        and, thus,
        \begin{align*}
            u(t, x) - u(t_0, x_0)
            & > (c - F(t_0, x_0, z)) (t - t_0) + \int_{t_0}^t (f^x (s), z) \, ds \\
            & = (c - F(t_0, x_0, z)) (t - t_0)
            + \int_{t_0}^t \bigl( (x^\prime(s), z) + \langle A(s, x(s)), z \rangle \bigr) \, ds \\
            & = (\varphi + \tilde{\varphi}^z)(t, x) - (\varphi + \tilde{\varphi}^z)(t_0, x_0)
        \end{align*}
        with the function $\varphi \colon [t_0, T] \times C([0, T], H) \to \mathbb{R}$ defined by
        \begin{equation*}
            \varphi(t, x)
            \coloneq u(t_0, x_0) + (t - t_0) (c - F(t_0, x_0, z)) + (x(t) - x_0(t_0), z)
        \end{equation*}
        for all $(t, x) \in [t_0, T] \times C([0, T], H)$ and the function $\tilde{\varphi}^z$ defined in \eqref{E:tilde:varphi}.
        Consequently, we have $(\varphi, z) \in \overline{\mathcal{A}}^L u(t_0, x_0)$ with $\partial_t \varphi(t, x) = c - F(t_0, x_0, z)$ and $\partial_x \varphi(t, x) = z$ for all $(t, x) \in [t_0, T) \times C([0, T], H)$.
        However,
        \begin{align*}
            & \partial_t \varphi(t_0, x_0)
            + \liminf_{\delta \to 0^+} \inf_{x \in \mathcal{X}^L(t_0, x_0)}
            \frac{1}{\delta} \int_{t_0}^{t_0 + \delta}
            \bigl( \partial_t \tilde{\varphi}^z (s, x)
            - \langle A(s, x(s)), \partial_x \varphi(s, x) \rangle \bigr) \, ds \\*
            & \qquad + F(t_0, x_0, \partial_x \varphi(t_0, x_0)) \\
            & \quad = c - F(t_0, x_0, z) + F(t_0, x_0, \partial_x \varphi(t_0, x_0))
            = c
            > 0,
        \end{align*}
        which is a contradiction to $u$ being a viscosity $L$-supersolution of \eqref{TVP}.
    \end{proof}

    From Theorems \ref{theorem_minimax_existence_uniqueness} and \ref{theorem_equivalence}, we derive
    \begin{theorem} \label{theorem_viscosity_existence_uniqueness}
        Let $\mathbf{H}(A)$, $\mathbf{H}(F)$, and $\mathbf{H}(h)$ be satisfied.
        Then, there exists a unique viscosity solution $u$ of \eqref{TVP}.
        Moreover, $u$ is a viscosity $L_0$-solution of \eqref{TVP}, where $L_0 \geq 0$ is the number from $\mathbf{H}(F)${\rm(ii)}.
    \end{theorem}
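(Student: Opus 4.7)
The plan is to deduce Theorem \ref{theorem_viscosity_existence_uniqueness} directly from the combination of Theorem \ref{theorem_minimax_existence_uniqueness} and Theorem \ref{theorem_equivalence}. The key observation is that Theorem \ref{theorem_equivalence} is stated at a fixed level $L \ge 0$ (both for sub- and for supersolutions), so the passage between the minimax and viscosity notions can be carried out level-by-level, with the constant $L$ preserved, and no additional analysis is required beyond bookkeeping.

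For existence, I would start from the function $u$ supplied by Theorem \ref{theorem_minimax_existence_uniqueness}, which is a minimax $L_0$-solution of \eqref{TVP} with $L_0$ the constant from $\mathbf{H}(F)${\rm(ii)}. By the remark immediately following Definition \ref{D:minimax_solutions}, this $u$ is simultaneously a minimax $L_0$-subsolution and a minimax $L_0$-supersolution. Applying Theorem \ref{theorem_equivalence}(i) and (ii) with $L = L_0$, I conclude that $u$ is both a viscosity $L_0$-supersolution and a viscosity $L_0$-subsolution, hence a viscosity $L_0$-solution in the sense of Definition \ref{definition_viscosity}(iii). In particular, $u$ is a viscosity solution in the sense of Definition \ref{definition_viscosity}(iv), which simultaneously delivers existence and the additional claim that the solution is of class $L_0$.

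For uniqueness, let $\tilde u$ be any viscosity solution of \eqref{TVP}. By Definition \ref{definition_viscosity}(iv), there exists some $\tilde L \ge 0$ such that $\tilde u$ is both a viscosity $\tilde L$-subsolution and a viscosity $\tilde L$-supersolution. Applying Theorem \ref{theorem_equivalence}(i) and (ii) in the reverse direction with $L = \tilde L$, $\tilde u$ is a minimax $\tilde L$-solution, and hence a minimax solution. The uniqueness assertion in Theorem \ref{theorem_minimax_existence_uniqueness} then forces $\tilde u = u$.

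There is essentially no analytic obstacle here, since all the work has been absorbed into the two previous theorems; the only point requiring care is that the parameter $L$ is allowed to differ between a candidate viscosity solution and the minimax solution of Theorem \ref{theorem_minimax_existence_uniqueness}, but this causes no difficulty because uniqueness in Theorem \ref{theorem_minimax_existence_uniqueness} is stated for minimax solutions without reference to any particular $L$. Note also that hypothesis $\mathbf{H}(F)${\rm(iii)} is invoked only through Theorem \ref{theorem_minimax_existence_uniqueness} (in its comparison step) and is not used in Theorem \ref{theorem_equivalence}, which explains why the full set of hypotheses $\mathbf{H}(A)$, $\mathbf{H}(F)$, $\mathbf{H}(h)$ is needed here, exactly as in the minimax theorem.
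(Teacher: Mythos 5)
Your argument is correct and is exactly the derivation the paper intends: the paper simply states that the theorem follows from Theorems \ref{theorem_minimax_existence_uniqueness} and \ref{theorem_equivalence}, and your level-by-level transfer (existence at level $L_0$, uniqueness via converting an arbitrary viscosity $\tilde L$-solution back to a minimax solution) is the precise bookkeeping behind that statement.
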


\section{Functional chain rule and co-invariant derivatives}
\label{S:Functional_chain_rule}

    The proof of Theorem \ref{theorem_minimax_existence_uniqueness} is preceded by a discussion of the functional chain rule and co-invariant derivatives.
    As an application, we establish the required smoothness properties of a function used in the proof of Theorem \ref{theorem_minimax_existence_uniqueness}.
    Note that the technique developed in this section is also of independent interest.

    Let $t_\ast \in [0, T)$.
    In general, it is rather difficult to prove that a function $\varphi \colon [t_\ast, T] \times C([0, T], H) \to \mathbb{R}$ belongs to the set $\mathcal{C}_V^{1, 1}([t_\ast, T] \times C([0, T], H))$.
    In \cite[Example 2.18]{Bayraktar_Keller_2018}, this fact is verified for some specific functions $\varphi$ using the integration-by-parts formula \cite[Proposition 2.4]{Bayraktar_Keller_2018}.
    This section describes another approach based on the development of the technique of so-called co-invariant derivatives (see, e.g., \cite{Kim_1999,Lukoyanov_2003a} and also \cite{Gomoyunov_Lukoyanov_Plaksin_2021,Gomoyunov_Lukoyanov_RMS_2024} in the finite-dimensional setting).

    \begin{definition} \label{definition_ci_smooth_A3}
        Let $t_\ast \in [0, T)$ and $\varphi \colon [t_\ast, T] \times C([0, T], H) \to \mathbb{R}$.

        (i)
            Let $(t_0, x_0) \in [t_\ast, T) \times C([0, T], H)$.
            We call $\varphi$ co-invariantly differentiable ($ci$-dif\-fer\-entiable for short) at $(t_0, x_0)$ if there exist $\partial_t^{\, ci} \varphi(t_0, x_0) \in \mathbb{R}$ and $\partial_x^{\, ci} \varphi(t_0, x_0) \in H$ such that, for every $x \in C([0, T], H)$ with $x(t) = x_0(t)$ for all $t \in [0, t_0]$ and $x|_{[t_0, T]} \in C^1([t_0, T], V)$,
            \begin{equation} \label{ci_derivatives_definition_A3}
                \varphi(t, x) - \varphi(t_0, x_0)
                = \partial_t^{\, ci} \varphi(t_0, x_0) (t - t_0)
                + (\partial_x^{\, ci} \varphi(t_0, x_0), x(t) - x_0(t_0)) + o(t - t_0),
            \end{equation}
            where $t \in (t_0, T]$ and the function $\delta \mapsto o(\delta)$, $(0, T - t_0] \to \mathbb{R}$, may depend on $x$ and $o(\delta) / \delta \to 0$ as $\delta \to 0^+$.
            In this case, the values $\partial_t^{\, ci} \varphi(t_0, x_0)$ and $\partial_x^{\, ci} \varphi(t_0, x_0)$ are called co-invariant derivatives ($ci$-derivatives for short) of $\varphi$ at $(t_0, x_0)$.

        (ii)
            We call $\varphi$ co-invariantly smooth ($ci$-smooth for short) if $\varphi$ is continuous, $ci$-differ\-entiable at every point $(t, x) \in [t_\ast, T) \times C([0, T], H)$, and its $ci$-derivatives $\partial_t^{\, ci} \varphi \colon [t_\ast, T) \times C([0, T], H) \to \mathbb{R}$ and $\partial_x^{\, ci} \varphi \colon [t_\ast, T) \times C([0, T], H) \to H$ are continuous.
    \end{definition}

    \begin{remark} \label{remark_ci-derivatives_finite-dimensional}
        In the finite-dimensional setting \cite{Lukoyanov_2003a,Gomoyunov_Lukoyanov_Plaksin_2021,Gomoyunov_Lukoyanov_RMS_2024}, in the definition of $ci$-dif\-fer\-entiability, the class of admissible extensions of $\{x_0(t)\}_{t \in [0, t_0]}$ usually consists of all functions $x$ such that the restriction $x|_{[t_0, T]}$ is Lipschitz continuous.
        However, the definition given above (Definition \ref{definition_ci_smooth_A3}, (i)) assumes that $x|_{[t_0, T]} \in C^1([t_0, T], V)$.
        Note that this assumption implies Lipschitz continuity.
        Indeed, since $V$ is continuously embedded into $H$, there is a constant $c_{V, H} > 0$ such that
        \begin{equation} \label{c_VH}
            |v|
            \leq c_{V, H} \|v\|,
            \quad v \in V.
        \end{equation}
        Then, for any $t$, $s \in [t_0, T]$, we derive
        \begin{equation*}
            |x(t) - x(s)|
            \leq c_{V, H} \|x(t) - x(s)\|
            \leq c_{V, H} \max_{r \in [t_0, T]} \|\dot{x}(r)\|  |t - s|.
        \end{equation*}
        Thus, the class of admissible extensions from Definition \ref{definition_ci_smooth_A3}(i) is narrower than the usually used one.
        In particular, this allows us to transfer the results on $ci$-differentiability of functions from the finite-dimensional case to the infinite-dimensional case under consideration.
    \end{remark}

    \begin{remark} \label{remark_ci-derivatives_equivalent}
        Since the class of admissible extensions of $\{x_0(t)\}_{t \in [0, t_0]}$ in Definition \ref{definition_ci_smooth_A3}(i) consists of functions $x$ such that $x|_{[t_0, T]} \colon [t_0,T] \to V$ is continuously differentiable with $V$-valued derivatives $\dot{x}(t)$, relation \eqref{ci_derivatives_definition_A3} can be equivalently reformulated as follows:
        \begin{align*}
            \frac{d}{dt^+} \varphi(t_0, x_0)
            & \coloneq \lim_{t \to t_0^+} \frac{\varphi(t, x) - \varphi(t_0, x_0)}{t - t_0} \\
            & = \lim_{t \to t_0^+} \frac{\partial_t^{\, ci} \varphi(t_0, x_0) (t - t_0)
            + (\partial_x^{\, ci} \varphi(t_0, x_0), x(t) - x_0(t_0)) + o(t - t_0)}{t - t_0} \\
            & = \partial_t^{\, ci} \varphi(t_0, x_0)
            + (\partial_x^{\, ci} \varphi(t_0, x_0), \dot{x}(t_0)).
        \end{align*}
    \end{remark}

    \begin{remark} \label{remark_ci-derivatives_path-derivatives}
        Let $t_\ast \in [0, T)$, $\varphi \in \mathcal{C}_V^{1, 1}([t_\ast, T] \times C([0, T], H))$, and $(t_0, x_0) \in [t_\ast, T) \times C([0, T], H)$.
        Then, $\varphi$ is $ci$-smooth and its path-derivatives coincide with its $ci$-derivatives on $[t_\ast, T) \times C([0, T], H)$.
        Indeed, note that the only thing we need to verify is the coincidence of the derivatives at every point.
        To this end, consider $x \in C([0, T], H)$ with $x(t) = x_0(t)$ for all $t \in [0, t_0]$ and $x|_{[t_0, T]} \in C^1([t_0, T], V)$.
        Then, we have $x|_{(t_0, T)} \in W_{pq}(t_0, T)$ and, therefore,
        \begin{equation*}
            \varphi(t, x) - \varphi(t_0, x_0)
            = \int_{t_0}^{t} \bigl( \partial_t \varphi(s, x)
            + \langle x^\prime(s), \partial_x \varphi(s, x) \rangle \bigr) \, ds,
            \quad t \in [t_0, T].
        \end{equation*}
        Note that $x^\prime(s) = \dot{x}(s)$ for a.e. $s \in [t_0, T]$.
        Hence,
        \begin{equation*}
            \varphi(t, x) - \varphi(t_0, x_0)
            = \int_{t_0}^{t} \bigl( \partial_t \varphi(s, x)
            + (\dot{x}(s), \partial_x \varphi(s, x)) \bigr) \, ds,
            \quad t \in [t_0, T].
        \end{equation*}
        The function under the integral sign is continuous as a function of $s$.
        Then, we can differentiate with respect to $t$ to obtain
        \begin{equation*}
            \frac{d}{dt^+} \varphi(t_0, x_0)
            = \partial_t \varphi(t_0, x_0)
            + (\partial_x \varphi(t_0, x_0), \dot{x}(t_0)).
        \end{equation*}
        So, $\varphi$ is $ci$-differentiable at the point $(t_0, x_0)$ with the $ci$-derivatives $\partial_t^{\, ci} \varphi(t_0, x_0) = \partial_t \varphi(t_0, x_0)$, $\partial_x^{\, ci} \varphi(t_0, x_0) = \partial_x \varphi(t_0, x_0)$.
        Moreover, we immediately obtain that, under the considered assumptions, the function $\varphi$ is $ci$-smooth.
    \end{remark}

    The next auxiliary result shows that, for a $ci$-smooth function with $ci$-derivatives satisfying the additional boundedness property, the functional chain rule holds provided that $x|_{[t_0, T]} \in C^1([t_0, T], V)$ (for a proof of a close result in the finite-dimensional case, see, e.g., \cite[Proposition 1]{Gomoyunov_Lukoyanov_RMS_2024}).
    \begin{proposition} \label{proposition_ci-smooth_A3}
        Let $t_\ast \in [0, T)$ and let $\varphi \colon [t_\ast, T] \times C([0, T], H) \to \mathbb{R}$ be a $ci$-smooth function.
        Fix $(t_0, x_0) \in [t_\ast, T) \times C([0, T], H)$ and consider $x \in C([0, T], H)$ with $x(t) = x_0(t)$ for all $t \in [0, t_0]$ and $x|_{[t_0, T]} \in C^1([t_0, T], V)$.
        Suppose that there exists $M \geq 0$ such that $|\partial_t^{\, ci} \varphi(t, x)| \leq M$ and $|\partial_x^{\, ci} \varphi(t, x)| \leq M$ for all $t \in [t_0, T)$.
        Then, for every $t \in [t_0, T]$, the functional chain rule holds:
        \begin{equation} \label{proposition_ci-smooth_A3_main}
            \varphi(t, x) - \varphi(t_0, x_0)
            = \int_{t_0}^{t} \bigl( \partial_t^{\, ci} \varphi(s, x)
            + (\dot{x}(s), \partial_x^{\, ci} \varphi(s, x)) \bigr) \, ds.
        \end{equation}
    \end{proposition}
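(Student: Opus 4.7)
My plan is to reduce the identity to a one-dimensional fundamental theorem of calculus for the real-valued function $g \colon [t_0, T] \to \mathbb{R}$, $g(t) \coloneq \varphi(t, x)$, which is continuous by continuity of $\varphi$ and of $x$. First I would observe that for every $\tau \in [t_0, T)$, the path $x$ itself is an admissible extension in the sense of Definition \ref{definition_ci_smooth_A3}(i) applied at the point $(\tau, x)$: the agreement on $[0, \tau]$ is trivial, and $x|_{[\tau, T]} \in C^1([\tau, T], V)$ as a restriction of a map in $C^1([t_0, T], V)$. Specializing \eqref{ci_derivatives_definition_A3} to this extension, dividing by $t - \tau$, and letting $t \to \tau^+$, together with Remark \ref{remark_ci-derivatives_equivalent}, gives
\begin{equation*}
    g'_+(\tau)
    = \partial_t^{\, ci} \varphi(\tau, x) + (\partial_x^{\, ci} \varphi(\tau, x), \dot{x}(\tau)),
    \qquad \tau \in [t_0, T),
\end{equation*}
where we use that $(x(t) - x(\tau))/(t - \tau) \to \dot{x}(\tau)$ in $V$, hence in $H$ by \eqref{c_VH}.

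Next I would verify the regularity of $g'_+$ on $[t_0, T)$. Continuity follows from the $ci$-smoothness of $\varphi$ (which makes $\tau \mapsto \partial_t^{\, ci} \varphi(\tau, x)$ and $\tau \mapsto \partial_x^{\, ci} \varphi(\tau, x)$ continuous into $\mathbb{R}$ and $H$, respectively) combined with continuity of $\dot{x} \colon [t_0, T] \to V \hookrightarrow H$. Boundedness of $g'_+$ on $[t_0, T)$ follows from the hypothesized bounds $|\partial_t^{\, ci} \varphi(\cdot, x)| \leq M$ and $|\partial_x^{\, ci} \varphi(\cdot, x)| \leq M$ combined with boundedness of $\dot{x}$ on the compact interval $[t_0, T]$.

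Finally I would invoke the following standard one-dimensional fact: if $g \in C([t_0, T], \mathbb{R})$ has a right-derivative at every $\tau \in [t_0, T)$ that is both continuous and bounded on $[t_0, T)$, then $g(t) - g(t_0) = \int_{t_0}^t g'_+(s) \, ds$ for every $t \in [t_0, T]$. The usual proof sets $h(t) \coloneq g(t) - g(t_0) - \int_{t_0}^t g'_+(s) \, ds$, notes that $h$ is continuous on $[t_0, T]$ with $h'_+ \equiv 0$ on $[t_0, T)$ (continuity of $g'_+$ makes the integral right-differentiable with the expected value), and applies the Dini-type mean value theorem to conclude $h \equiv 0$ on $[t_0, T)$; passage to $t = T$ uses continuity of $g$ and of the integral, which in turn relies on integrability of $g'_+$. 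Applied to our $g$, this yields \eqref{proposition_ci-smooth_A3_main}.

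The only obstacle worth flagging is the handling of the right endpoint $T$: since $ci$-derivatives are only defined on $[t_\ast, T)$, there is an a priori risk that the integrand in \eqref{proposition_ci-smooth_A3_main} is not integrable near $T$. This is precisely what the boundedness hypothesis rules out, and it is the reason that hypothesis appears in the statement; everything else is a bookkeeping exercise reducing an abstract chain rule to a classical scalar one.
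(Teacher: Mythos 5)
Your proposal is correct and follows essentially the same route as the paper: reduce to the scalar function $t \mapsto \varphi(t, x)$, identify its right-derivative via the definition of $ci$-differentiability (Remark \ref{remark_ci-derivatives_equivalent}), upgrade the continuous right-derivative to a genuine derivative by a Dini-type theorem on compact subintervals of $[t_0, T)$, and use the boundedness hypothesis together with continuity of $\varphi$ to pass to the endpoint $t = T$. The only cosmetic difference is that the paper packages the Dini step as ``$\mu$ is continuously differentiable on $[t_0, \vartheta]$'' and then applies the fundamental theorem of calculus, whereas you fold both into a single one-dimensional lemma; the content is identical.
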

    \begin{proof}
        Fix $\vartheta \in [t_0, T)$ and consider the function $\mu(t) \coloneq \varphi(t, x)$, $t \in [t_0, \vartheta]$.
        The function $\mu$ is continuous by continuity of $\varphi$.
        For every $t \in [t_0, \vartheta]$, since $\varphi$ is $ci$-differentiable at $(t, x)$, we have (see Remark \ref{remark_ci-derivatives_equivalent})
        \begin{equation*}
            \dot{\mu}^+(t)
            = \partial_t^{\, ci} \varphi(t, x) + (\partial_x^{\, ci} \varphi(t, x), \dot{x}(t)),
        \end{equation*}
        where $\dot{\mu}^+(t)$ is the right-hand derivative of $\mu$ at $t$.
        Note that the function $\dot{\mu}^+$ is continuous on $[t_0, \vartheta]$ owing to continuity of $\partial_t^{\, ci} \varphi$ and $\partial_x^{\, ci} \varphi$.
        Hence, it follows from Dini's theorem \cite[Chapter 4, Theorem 1.3]{Bruckner_1978} that $\mu$ is continuously differentiable.
        As a result, for every $t \in [t_0, \vartheta]$,
        \begin{equation*}
            \mu(t) - \mu(t_0)
            = \int_{t_0}^{t} \dot{\mu}(s) \, ds
            = \int_{t_0}^{t} \dot{\mu}^+(s) \, ds
            = \int_{t_0}^{t} \bigl( \partial_t^{\, ci} \varphi(s, x) + (\partial_x^{\, ci} \varphi(s, x), \dot{x}(s)) \bigr) \, ds.
        \end{equation*}
        Thus, formula \eqref{proposition_ci-smooth_A3_main} holds for all $t \in [t_0, T)$ and it remains to verify it for $t = T$.
        Taking the constant $c_{V, H}$ from \eqref{c_VH} and recalling the choice of $M$, we derive
        \begin{align*}
            & |\partial_t^{\, ci} \varphi(t, x) + (\partial_x^{\, ci} \varphi(t, x), \dot{x}(t))|
            \leq |\partial_t^{\, ci} \varphi(t, x)| + |\partial_x^{\, ci} \varphi(t, x)| |\dot{x}(t)| \\
            & \quad \leq M + M c_{V, H} \|\dot{x}(t)\|
            \leq M + M c_{V, H} \max_{s \in [t_0, T]} \|\dot{x}(s)\|
        \end{align*}
        for all $t \in [t_0, T)$.
        Therefore,
        \begin{align*}
            & \biggl| \int_{t_0}^{T - 1 / n} \bigl( \partial_t^{\, ci} \varphi(s, x) + (\partial_x^{\, ci} \varphi(s, x), \dot{x}(s)) \bigr) \, ds
            - \int_{t_0}^{T} \bigl( \partial_t^{\, ci} \varphi(s, x) + (\partial_x^{\, ci} \varphi(s, x), \dot{x}(s)) \bigr) \, ds \biggr| \\
            & \quad = \biggl| \int_{T - 1 / n}^T \bigl( \partial_t^{\, ci} \varphi(s, x) + (\partial_x^{\, ci} \varphi(s, x), \dot{x}(s)) \bigr) \, ds \biggr| \\
            & \quad \leq \int_{T - 1 / n}^T \bigl| \partial_t^{\, ci} \varphi(s, x) + (\partial_x^{\, ci} \varphi(s, x), \dot{x}(s)) \bigr| \, ds \\
            & \quad \leq \bigl( M + M c_{V, H} \max_{s \in [t_0, T]} \|\dot{x}(s)\| \bigr) / n \to 0
        \end{align*}
        as $n \to \infty$.
        Consequently, using continuity of $\varphi$ and the fact that formula \eqref{proposition_ci-smooth_A3_main} holds for $t = T - 1 / n$, we derive
        \begin{align*}
            \varphi(T, x) - \varphi(t_0, x_0)
            & = \lim_{n \to \infty} (\varphi(T - 1 / n, x) - \varphi(t_0, x_0)) \\
            & = \lim_{n \to \infty} \int_{t_0}^{T - 1 / n} \bigl( \partial_t^{\, ci} \varphi(s, x) + (\partial_x^{\, ci} \varphi(s, x), \dot{x}(s)) \bigr) \, ds \\
            & = \int_{t_0}^{T} \bigl( \partial_t^{\, ci} \varphi(s, x) + (\partial_x^{\, ci} \varphi(s, x), \dot{x}(s)) \bigr) \, ds.
        \end{align*}
        The proof is complete.
    \end{proof}

    The next proposition is the main result of this section.
    It provides a sufficient condition that a function $\varphi$ belongs to the set $\mathcal{C}^{1,1}_V([0, T] \times C([0, T], H))$ and, in particular, a way of calculating its path derivatives (see Remark \ref{remark_ci-derivatives_path-derivatives}).
    The proof is by approximation argument (see, e.g., \cite[Proposition 2.15]{Bayraktar_Keller_2018} and \cite[Theorem 5.7]{Nisio_2015} for similar results in the non-path-dependent case).

    \begin{proposition} \label{proposition_ci-smooth_W_A3}
        Let $t_\ast \in [0, T)$ and let $\varphi \colon [t_\ast, T] \times C([0, T], H) \to \mathbb{R}$ be a $ci$-smooth function that satisfies the following conditions.

        {\rm (i)}
            There exist continuous functions $\partial_t \varphi \colon [t_\ast, T] \times C([0, T], H) \to \mathbb{R}$ and $\partial_x \varphi \colon [t_\ast, T] \times C([0, T], H) \to H$ such that
            \begin{equation*}
                \partial_t \varphi(t, x)
                = \partial_t^{\, ci} \varphi(t, x),
                \quad \partial_x \varphi(t, x)
                = \partial_x^{\, ci} \varphi(t, x),
                \quad (t, x) \in [t_\ast, T) \times C([0, T], H).
            \end{equation*}

        {\rm (ii)}
            For every $x \in C([0, T], H)$ with $x(t) \in V$ for a.e. $t \in [t_\ast, T]$, it holds that $\partial_x \varphi(t, x) \in V$ for a.e. $t \in [t_\ast, T]$.

        {\rm (iii)}
            Let $t_0 \in [t_\ast, T)$ and $x \in C([0, T], H)$ with $x\vert_{(t_0, T)} \in W_{p q}(t_0, T)$.
            Let $(x_n)_n$ be a sequence in $C([0, T], H)$ such that $x_n\vert_{[t_0, T]} \in C^1([t_0, T], V)$ for every $n \in \mathbb{N}$ and
            \begin{equation*}
                \|x\vert_{[t_0, T]} - x_n\vert_{[t_0, T]}\|_{W_{p q}(t_0, T)}
                + \|x - x_n\|_\infty
                \to 0
                \quad \text{as } n \to \infty.
            \end{equation*}
            Then, $\| \partial_x \varphi(\cdot, x_n) - \partial_x \varphi(\cdot, x) \|_{L^p(t_0, T; V)} \to 0$ as $n \to \infty$ and there is a number $L > 0$ such that
            \begin{equation*}
                \| \partial_x \varphi(\cdot, x_n)\|_{L^p(t_0, T; V)}
                \leq L,
                \quad n \in \mathbb{N}.
            \end{equation*}

        Then, $\varphi \in \mathcal{C}^{1, 1}_V([0, T] \times C([0, T], H))$ with the path-derivatives from item {\rm (i)}.
    \end{proposition}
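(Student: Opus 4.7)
The continuity of $\varphi$ and the existence of continuous candidates $\partial_t \varphi$, $\partial_x \varphi$ satisfying the $V$-valuedness requirement are built into hypotheses (i) and (ii); what remains is to verify the functional chain rule for an arbitrary $t_0 \in [t_\ast, T)$, $x_0 \in C([0, T], H)$, and $x \in C([0, T], H)$ with $x|_{[0, t_0]} = x_0|_{[0, t_0]}$ and $x|_{(t_0, T)} \in W_{pq}(t_0, T)$. My plan is to approximate such an $x$ by a sequence of $C^1$-extensions to which Proposition \ref{proposition_ci-smooth_A3} directly applies, and then to pass to the limit using the Gelfand-triple duality.

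First I construct $(x_n)_n \subset C([0, T], H)$ with $x_n|_{[0, t_0]} = x_0|_{[0, t_0]}$, $x_n|_{[t_0, T]} \in C^1([t_0, T], V)$, and
\[
    \|x|_{[t_0, T]} - x_n|_{[t_0, T]}\|_{W_{pq}(t_0, T)} + \|x - x_n\|_\infty \to 0.
\]
This is obtained by mollifying a suitable extension of $x|_{[t_0, T]}$ to get a $W_{pq}$-approximation by $C^1$ $V$-valued functions, then correcting the value at $t_0$ by a smooth perturbation that is small in $W_{pq}$ so that the glued path is continuous, and finally using the continuous embedding $W_{pq}(t_0, T) \hookrightarrow C([t_0, T], H)$ from \cite[Proposition 23.23]{ZeidlerIIA} to control the sup-norm error.

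For each $n$, Proposition \ref{proposition_ci-smooth_A3} applies with starting point $(t_0, x_0)$ and continuation $x_n$: its boundedness hypothesis is met because, by (i), the $ci$-derivatives $\partial_t^{\, ci} \varphi(\cdot, x_n)$ and $\partial_x^{\, ci} \varphi(\cdot, x_n)$ coincide with the globally continuous $\partial_t \varphi(\cdot, x_n)$ and $\partial_x \varphi(\cdot, x_n)$ on $[t_0, T)$, hence are bounded on the compact set $[t_0, T] \times \{x_n\}$. This yields, for every $t \in [t_0, T]$,
\[
    \varphi(t, x_n) - \varphi(t_0, x_0) = \int_{t_0}^{t} \bigl( \partial_t \varphi(s, x_n) + (\dot{x}_n(s), \partial_x \varphi(s, x_n)) \bigr) \, ds.
\]
I then pass to the limit. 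The left-hand side converges to $\varphi(t, x) - \varphi(t_0, x_0)$ by continuity of $\varphi$ and $x_n \to x$ in $C([0, T], H)$. For the $\partial_t$-term, continuity of $\partial_t \varphi$ combined with compactness of $\{x_n\}_n \cup \{x\}$ in $C([0, T], H)$ gives pointwise convergence together with a uniform bound, so dominated convergence applies. For the $\partial_x$-term I use that $\dot{x}_n(s), \partial_x \varphi(s, x_n) \in V$ a.e.\ to rewrite the integrand as the $V^\ast$-$V$ pairing $\langle \dot{x}_n(s), \partial_x \varphi(s, x_n) \rangle$, and split
\[
    \langle \dot{x}_n, \partial_x \varphi(\cdot, x_n) \rangle - \langle x^\prime, \partial_x \varphi(\cdot, x) \rangle = \langle \dot{x}_n, \partial_x \varphi(\cdot, x_n) - \partial_x \varphi(\cdot, x) \rangle + \langle \dot{x}_n - x^\prime, \partial_x \varphi(\cdot, x) \rangle.
\]
Hölder's inequality in the $L^p(t_0, T; V)/L^q(t_0, T; V^\ast)$-duality then forces both integrals to vanish: the first via the $L^q(V^\ast)$-boundedness of $(\dot{x}_n)_n$ (inherited from $W_{pq}$-convergence) together with hypothesis (iii), and the second via $\dot{x}_n \to x^\prime$ in $L^q(t_0, T; V^\ast)$ together with $\partial_x \varphi(\cdot, x) \in L^p(t_0, T; V)$ (which is automatic from (iii)).

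I expect the main obstacle to be the approximation step: producing a single $C^1([t_0, T], V)$-sequence that simultaneously matches the initial value $x_0(t_0)$, converges in the $W_{pq}$-norm, and glues to $x_0|_{[0, t_0]}$ to form a path continuous in $C([0, T], H)$. Once this construction is in place, the limit-passage amounts to careful bookkeeping between three different function spaces ($C([0, T], H)$, $L^p(t_0, T; V)$, and $L^q(t_0, T; V^\ast)$), but introduces no essentially new idea.
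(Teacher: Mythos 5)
Your overall strategy is exactly the paper's: approximate $x|_{[t_0,T]}$ by $C^1([t_0,T],V)$-functions in the $W_{pq}$-norm, apply Proposition \ref{proposition_ci-smooth_A3} to each approximant (with the boundedness hypothesis supplied by continuity of $\partial_t\varphi$, $\partial_x\varphi$ on a compact set), and pass to the limit via uniform convergence for the $\partial_t$-term and the $L^p(t_0,T;V)/L^q(t_0,T;V^\ast)$ H\"older duality plus hypothesis (iii) for the $\partial_x$-term. That part of your argument is sound (your splitting of the duality pairing is the mirror image of the paper's, but both work, and $\partial_x\varphi(\cdot,x)\in L^p(t_0,T;V)$ does follow from (iii)).

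However, the gluing step as you describe it cannot be carried out. You require simultaneously $x_n|_{[0,t_0]} = x_0|_{[0,t_0]}$ and $x_n|_{[t_0,T]}\in C^1([t_0,T],V)$; continuity at $t_0$ then forces $x_n(t_0)=x_0(t_0)\in V$. But $x_0(t_0)$ is merely an element of $H$, and $V$ is a proper dense subspace of $H$ in general, so no amount of ``correcting the value at $t_0$ by a smooth perturbation'' can make a $V$-valued $C^1$ curve emanate from $x_0(t_0)$ when $x_0(t_0)\notin V$. Moreover, you need the exact match on $[0,t_0]$ precisely because you want to invoke Proposition \ref{proposition_ci-smooth_A3} ``with starting point $(t_0,x_0)$ and continuation $x_n$'', and that proposition requires $x_n(t)=x_0(t)$ on $[0,t_0]$. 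The paper's resolution is to give up the exact match: take $y_n\in C^1([t_0,T],V)$ with $\|y-y_n\|_{W_{pq}(t_0,T)}+\|y-y_n\|_{C([t_0,T],H)}\le 1/n$, extend $y_n$ backwards by any $x_n\in C([0,T],H)$ with $\|(x_n-x_0)(\cdot\wedge t_0)\|_\infty\le 1/n$ (possible since $|x_0(t_0)-y_n(t_0)|\le 1/n$), apply the chain rule \emph{starting at} $(t_0,x_n)$ to get $\varphi(t,x_n)-\varphi(t_0,x_n)=\int_{t_0}^t(\dots)\,ds$, and then use continuity of $\varphi$ with respect to $\mathbf{d}_\infty$ to send $\varphi(t_0,x_n)\to\varphi(t_0,x_0)$. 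With this one modification the rest of your limit passage goes through unchanged.
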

    \begin{proof}
        Note that the only fact we need to prove is the following.
        Suppose that $t_0 \in [t_\ast, T)$, $x_0 \in C([0, T], H)$, and $x \in C([0, T], H)$ with $x(t) = x_0(t)$ for all $t \in [0, t_0]$ and $x \vert_{(t_0, T)} \in W_{p q}(t_0, T)$ are given.
        Then, for every $t \in [t_0, T]$,
        \begin{equation*}
            \varphi(t, x) - \varphi(t_0, x_0)
            = \int_{t_0}^{t} \bigl( \partial_t \varphi(s, x)
            + \langle x^\prime(s), \partial_x \varphi(s, x) \rangle \bigr) \, ds.
        \end{equation*}

        Let $n \in \mathbb{N}$ be fixed.
        Since $y \coloneq x \vert_{(t_0, T)} \in W_{pq}(t_0, T)$, $C^1([t_0, T], V)$ is dense in $W_{p q}(t_0, T)$ (see, e.g., \cite[Proposition 23.23 (iii)]{ZeidlerIIA}), and $W_{p q}(t_0, T)$ is continuously embedded in $C([t_0, T], H)$ (see, e.g., \cite[Proposition 23.23 (ii)]{ZeidlerIIA}), there exists an $y_n \in C^1([t_0, T], V)$ such that
        \begin{equation} \label{E:proposition_ci-smooth_W_A3}
            \|y - y_n\|_{W_{p q}(t_0, T)}
            + \|y - y_n\|_{C([t_0, T], H)}
            \leq 1 / n.
        \end{equation}
        Now, since $|x_0(t_0) - y_n(t_0) |\leq 1 / n$, there is a function $x_n \in C([0, T], H)$ such that $\|(x_n - x_0)(\cdot \wedge t_0)\|_\infty \leq 1 / n$ and $x_n(t) = y_n(t)$ for every $t \in [t_0, T]$.

        The function $\varphi$ is $ci$-smooth and, in view of assumption (i), there exists $M_n \geq 0$ such that
        \begin{equation*}
            |\varphi_t^{\, ci}(t, x_n)|
            = |\varphi_t(t, x_n)|
            \leq M_n,
            \quad |\varphi^{\, ci}_x(t, x_n)|
            = |\varphi_x(t, x_n)|
            \leq M_n,
            \quad t \in [t_\ast, T).
        \end{equation*}
        Hence, we obtain by Proposition \ref{proposition_ci-smooth_A3} that, for every $t \in [t_0, T]$,
        \begin{equation*}
            \varphi(t, x_n) - \varphi(t_0, x_n)
            = \int_{t_0}^{t} \bigl( \partial_t \varphi(s, x_n)
            + (\dot{x}_n(s), \partial_x \varphi(s, x_n)) \bigr) \, ds.
        \end{equation*}

        Fix $t \in [t_0, T]$.
        Since the functions $\varphi$ and $\partial_t \varphi$ are continuous and $\|x_n - x\|_\infty \to 0$ as $n \to \infty$, we have
        \begin{gather*}
            |\varphi(t, x_n) - \varphi(t, x)|
            \to 0, \\
            \biggl| \int_{t_0}^{t} \bigl( \partial_t \varphi(s, x_n) - \partial_t \varphi(s, x) \bigr) \, ds \biggr|
            \leq (t - t_0) \max_{s \in [t_0, T]} |\partial_t \varphi(s, x_n) - \partial_t \varphi(s, x)|
            \to 0
        \end{gather*}
        as $n \to \infty$.
        Further, using the H\"{o}lder inequality in form of \cite[Proposition 23.6]{ZeidlerIIA}, we get
        \begin{align*}
            & \biggl| \int_{t_0}^{t}
            \bigl( (\dot{y}_n(s), \partial_x \varphi(s, x_n))
            - \langle y^\prime(s), \partial_x \varphi(s, x) \rangle\bigr) \, ds \biggr| \\
            & \quad \leq \int_{t_0}^{t} \bigl| \langle \dot{y}_n(s) - y^\prime(s), \partial_x \varphi(s, x_n) \rangle \bigr| \, ds
            + \int_{t_0}^{t} \bigl| \langle y^\prime(s), \partial_x \varphi(s, x_n) - \partial_x \varphi(s, x) \rangle \bigr| \, ds \\
            & \quad \leq \| \dot{y}_n - y^\prime \|_{L^q(t_0, T; V^\ast)}
            \| \partial_x \varphi(\cdot,x_n)\|_{L^p(t_0, T; V)} \\*
            & \qquad + \| y^\prime \|_{L^q(t_0, T; V^\ast)}
            \| \partial_x \varphi(\cdot, x_n) - \partial_x \varphi(\cdot, x) \|_{L^p(t_0, T; V)} \\
            & \quad \leq \| y_n - y \|_{W_{pq}(t_0, T)}
            \| \partial_x \varphi(\cdot,x_n)\|_{L^p(t_0, T; V)} \\*
            & \qquad + \| y \|_{W_{pq}(t_0, T)}
            \| \partial_x \varphi(\cdot, x_n) - \partial_x \varphi(\cdot, x) \|_{L^p(t_0, T; V)}
            \to 0
        \end{align*}
        as $n \to \infty$ thanks to \eqref{E:proposition_ci-smooth_W_A3} and assumption (iii).
        Then, we have
        \begin{align*}
            \varphi(t, x) - \varphi(t_0, x_0)
            & = \lim_{n \to \infty} \bigl( \varphi(t, x_n) - \varphi(t_0, x_n) \bigr) \\
            & = \lim_{n \to \infty} \int_{t_0}^{t} \bigl( \partial_t \varphi(s, x_n)
            + (\dot{x}_n(s), \partial_x \varphi(s, x_n)) \bigr) \, ds \\
            & = \int_{t_0}^{t} \bigl( \partial_t \varphi(s, x)
            + \langle x^\prime(s), \partial_x \varphi(s, x) \rangle \bigr) \, ds.
        \end{align*}
        The proof is complete.
    \end{proof}

    Let us apply Proposition \ref{proposition_ci-smooth_A3} in order to prove that a particular function belongs to $\mathcal{C}^{1, 1}_V([0, T] \times C([0, T], H))$.

    For every $(t, x) \in [0, T] \times C([0, T], H)$, denote
    \begin{equation} \label{Upsilon}
        \Upsilon(t, x)
        \coloneq
        \begin{cases}
            \displaystyle
            \frac{\bigl( \|x(\cdot \wedge t)\|_\infty^2 - |x(t)|^2 \bigr)^2}{\|x(\cdot \wedge t)\|_\infty^2} + 2 |x(t)|^2,
            & \text{if } \|x(\cdot \wedge t)\|_\infty > 0, \\
            0, & \text{if } \|x(\cdot \wedge t)\|_\infty = 0,
        \end{cases}
    \end{equation}
    and
    \begin{equation} \label{Psi_derivatives_A}
        \begin{aligned}
            \partial_t \Upsilon(t, x)
            & \coloneq 0, \\
            \partial_x \Upsilon(t, x)
            & \coloneq
            \begin{cases}
                \displaystyle
                - \frac{4 \bigl( \|x(\cdot \wedge t)\|_\infty^2 - |x(t)|^2 \bigr)}{\|x(\cdot \wedge t)\|_\infty^2} x(t) + 4 x(t)
                & \text{if } \|x(\cdot \wedge t)\|_\infty > 0, \\
                0 & \text{if } \|x(\cdot \wedge t)\|_\infty = 0.
            \end{cases}
        \end{aligned}
    \end{equation}
    Note that, in the case where $\|x(\cdot \wedge t)\|_\infty > 0$,
    \begin{equation*}
        \partial_x \Upsilon(t, x)
        = 4 \frac{\|x(\cdot \wedge t)\|_\infty^2 - \|x(\cdot \wedge t)\|_\infty^2 + |x(t)|^2}{\|x(\cdot \wedge t)\|_\infty^2} x(t)
        = 4 \frac{|x(t)|^2}{\|x(\cdot \wedge t)\|_\infty^2} x(t).
    \end{equation*}

    \begin{remark}
        The function $\Upsilon$ was introduced in \cite{Zhou_2020}, where it was also established that $\Upsilon$ is differentiable in the sense of Dupire \cite{Dupire_2019} with horizontal derivative $\partial_t \Upsilon$ and  vertical derivative $\partial_x \Upsilon$.
    \end{remark}

    \begin{proposition} \label{proposition_psi_c11V}
        The function $\Upsilon$ from \eqref{Upsilon} belongs to $\mathcal{C}^{1, 1}_V([0, T] \times C([0, T], H))$ and its path-derivatives are given by \eqref{Psi_derivatives_A}.
    \end{proposition}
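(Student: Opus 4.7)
The plan is to apply Proposition \ref{proposition_ci-smooth_W_A3} with $t_\ast = 0$ and the candidate derivatives from \eqref{Psi_derivatives_A}. Three things must be verified: that $\Upsilon$ is $ci$-smooth with $ci$-derivatives coinciding with \eqref{Psi_derivatives_A} on $[0, T) \times C([0, T], H)$ (condition (i)), that $\partial_x \Upsilon(t, x) \in V$ whenever $x(t) \in V$ (condition (ii)), and the $L^p(t_0, T; V)$-stability estimate (condition (iii)).

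The heart of the argument is the $ci$-differentiability at an arbitrary $(t_0, x_0) \in [0, T) \times C([0, T], H)$. I would first record the algebraic simplification
\begin{equation*}
    \Upsilon(t, x) = \|x(\cdot \wedge t)\|_\infty^2 + |x(t)|^4/\|x(\cdot \wedge t)\|_\infty^2
\end{equation*}
valid whenever $\|x(\cdot \wedge t)\|_\infty > 0$, which dramatically shortens all subsequent expansions. Given an admissible extension $x$ of $x_0$ with $x|_{[t_0, T]} \in C^1([t_0, T], V)$, so that $|x(t) - x_0(t_0)| = O(t - t_0)$ by \eqref{c_VH}, I would distinguish three cases. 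If $\|x_0(\cdot \wedge t_0)\|_\infty > |x_0(t_0)|$, then for $t$ near $t_0$ the supremum is frozen at $\|x_0(\cdot \wedge t_0)\|_\infty$ and only the second summand above varies, so a direct Taylor development of $|x(t)|^4$ yields the claimed increment. If $\|x_0(\cdot \wedge t_0)\|_\infty = |x_0(t_0)| > 0$, the delicate case, the supremum itself may grow with $t$; expanding both summands carefully shows that the $O(t - t_0)$ change in $\|x(\cdot \wedge t)\|_\infty^2$ cancels exactly with the $O(t - t_0)$ correction produced by dividing $|x(t)|^4$ by $\|x(\cdot \wedge t)\|_\infty^2$, leaving the clean first-order contribution $4 (x_0(t_0), x(t) - x_0(t_0)) + o(t - t_0)$ that matches \eqref{Psi_derivatives_A}. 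Finally, if $\|x_0(\cdot \wedge t_0)\|_\infty = 0$, the pinch bound $0 \leq \Upsilon(t, x) \leq 2 \|x(\cdot \wedge t)\|_\infty^2 = O((t - t_0)^2)$ is $o(t - t_0)$ and matches the vanishing candidate derivatives at $(t_0, x_0)$.

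Continuity of $\Upsilon$, $\partial_t \Upsilon$, and $\partial_x \Upsilon$ on $[0, T] \times C([0, T], H)$ is clear away from $\{\|x(\cdot \wedge t)\|_\infty = 0\}$ from continuity of $(t, x) \mapsto x(t)$ and $(t, x) \mapsto \|x(\cdot \wedge t)\|_\infty$; at the singular points it follows from the pinch bounds $0 \leq \Upsilon(t, x) \leq 2 \|x(\cdot \wedge t)\|_\infty^2$ and $|\partial_x \Upsilon(t, x)| \leq 4 |x(t)|$. Condition (ii) is immediate from the identity $\partial_x \Upsilon(t, x) = 4 (|x(t)|^2/\|x(\cdot \wedge t)\|_\infty^2) x(t)$, which exhibits $\partial_x \Upsilon(t, x)$ as a scalar multiple of $x(t)$ with coefficient in $[0, 4]$.

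For condition (iii), I would decompose
\begin{equation*}
    \partial_x \Upsilon(t, x_n) - \partial_x \Upsilon(t, x) = c_n(t) (x_n(t) - x(t)) + (c_n(t) - c(t)) x(t)
\end{equation*}
with $c_n(t), c(t) \in [0, 4]$. The first term contributes at most $4 \|x_n - x\|_{L^p(t_0, T; V)} \to 0$ by the $W_{pq}$-assumption; for the second, set $t^\ast \coloneq \sup \{t \in [t_0, T] \colon \|x(\cdot \wedge t)\|_\infty = 0\}$. On $[t_0, t^\ast]$ the limit path $x$ vanishes in $H$ and hence in $V$-norm at a.e.\ $t$ where $x(t) \in V$; on $[t^\ast + \delta, T]$ the coefficients $c_n(t)$ converge to $c(t)$ uniformly thanks to a positive lower bound on $\|x(\cdot \wedge t)\|_\infty$ combined with the uniform convergence $\|x_n - x\|_\infty \to 0$; on $[t^\ast, t^\ast + \delta]$ the contribution is dominated by $8^p \int_{t^\ast}^{t^\ast + \delta} \|x(t)\|_V^p \, dt$ uniformly in $n$, which vanishes as $\delta \to 0^+$. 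The uniform bound $\|\partial_x \Upsilon(\cdot, x_n)\|_{L^p(t_0, T; V)} \leq 4 \|x_n\|_{L^p(t_0, T; V)}$ is immediate. The main obstacle is the precise cancellation in the middle subcase of the $ci$-differentiability computation, where two competing $O(t - t_0)$ quantities must be matched exactly by using the $C^1$-regularity of the admissible extension together with the algebraic identity for $\Upsilon$.
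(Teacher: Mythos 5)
Your proposal is correct and follows the same overall architecture as the paper's proof: both reduce the claim to the three hypotheses of Proposition \ref{proposition_ci-smooth_W_A3}, both get condition (ii) from the explicit formula $\partial_x \Upsilon(t,x) = 4\bigl(|x(t)|^2/\|x(\cdot\wedge t)\|_\infty^2\bigr)x(t)$, and your verification of condition (iii) --- the decomposition $c_n(t)(x_n(t)-x(t)) + (c_n(t)-c(t))x(t)$ with $c_n, c \in [0,4]$, the splitting of $[t_0,T]$ at the last zero $t^\ast$ of the running supremum of $x$, the small buffer interval handled by absolute continuity of $\int \|x(t)\|^p\,dt$, and the uniform lower bound on $\|x(\cdot\wedge t)\|_\infty$ beyond it --- is essentially identical to the paper's argument. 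The one place where you genuinely diverge is condition (i): the paper does not prove $ci$-smoothness of $\Upsilon$ at all, but imports it from the finite-dimensional computation in \cite[Appendix B]{Gomoyunov_Lukoyanov_Plaksin_2021} via the observation (Remark \ref{remark_ci-derivatives_finite-dimensional}) that the admissible extensions in Definition \ref{definition_ci_smooth_A3} form a narrower class than the Lipschitz extensions used there. You instead give a direct, self-contained derivation based on the identity $\Upsilon(t,x) = \|x(\cdot\wedge t)\|_\infty^2 + |x(t)|^4/\|x(\cdot\wedge t)\|_\infty^2$ and a three-case analysis; I checked the delicate case $\|x_0(\cdot\wedge t_0)\|_\infty = |x_0(t_0)| > 0$, and the claimed cancellation of the two $O(t-t_0)$ terms (the increment $m(t)$ of the squared running supremum against the correction $-|x(t)|^4 m(t)/|x_0(t_0)|^4 = -m(t) + o(t-t_0)$ from expanding the denominator) does go through, yielding exactly $4(x_0(t_0), x(t)-x_0(t_0)) + o(t-t_0)$. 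Your route buys a proof that does not lean on an external reference, at the cost of carrying out the Taylor expansions explicitly; the paper's route is shorter but leaves the key differentiability computation to the cited finite-dimensional work.
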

    \begin{proof}
        Repeating the proof given in the finite-dimensional setting \cite[Appendix B]{Gomoyunov_Lukoyanov_Plaksin_2021} (in this connection, see Remark \ref{remark_ci-derivatives_finite-dimensional}), we obtain that the function $\Upsilon \colon [0, T] \times C([0, T], H) \to \mathbb{R}$ is $ci$-smooth, the function $\partial_x \Upsilon \colon [0, T] \times C([0, T], H) \to H$ is continuous, and $\partial_t \Upsilon (t, x) = \partial_t^{\, ci} \Upsilon (t, x)$, $\partial_x \Upsilon (t, x) = \partial_x^{\, ci} \Upsilon (t, x)$ for all $(t, x) \in [0, T) \times C([0, T], H)$.
        Hence, assumption (i) of Proposition \ref{proposition_ci-smooth_W_A3} is fulfilled.
        Owing to the explicit formula \eqref{Psi_derivatives_A} for $\partial_x \Upsilon$, assumption (ii) of Proposition \ref{proposition_ci-smooth_W_A3} is also fulfilled.

        Let us verify assumption (iii) of Proposition \ref{proposition_ci-smooth_W_A3}.
        Fix $t_0 \in [0, T)$ and $x \in C([0, T], H)$ with $x\vert_{[t_0, T]} \in W_{pq}(t_0, T)$.
        Consider a sequence $(x_n)_n$ in $C([0, T], H)$ such that $x_n\vert_{[t_0, T]} \in C^1([t_0, T], V)$ for every $n \in \mathbb{N}$ and
        \begin{equation*}
            \|x\vert_{(t_0, T)} - x_n\vert_{(t_0, T)}\|_{W_{p q}(t_0, T)}
            + \|x - x_n\|_\infty
            \to 0
            \quad \text{as } n \to \infty.
        \end{equation*}
        Since $\|x\vert_{(t_0, T)} - x_n\vert_{(t_0, T)}\|_{W_{p q}(t_0, T)} \to 0$, we have $\|x\vert_{(t_0, T)} - x_n\vert_{(t_0, T)}\|_{L^p(t_0, T; V)} \to 0$.
        Hence, there exists a number $L > 0$ such that $\|x_n\vert_{(t_0, T)}\|_{L^p(t_0, T; V)} \leq L$ for all $n \in \mathbb{N}$.
        For every $n \in \mathbb{N}$ and every $t \in [t_0, T]$, we have $\partial_x \Upsilon(t, x_n) \in V$ and
        \begin{equation*}
            \|\partial_x \Upsilon(t, x_n)\|
            \leq 4 \|x_n(t)\|.
        \end{equation*}
        Therefore,
        \begin{equation*}
            \| \partial_x \Upsilon(\cdot, x_n)\|_{L^p(t_0, T; V)}
            \leq 4 \| x_n|_{[t_0, T]}\|_{L^p(t_0, T; V)}
            \leq 4 L,
            \quad n \in \mathbb{N}.
        \end{equation*}
        So, it remains to prove that $\| \partial_x \varphi(\cdot, x_n) - \partial_x \varphi(\cdot, x) \|_{L^p(t_0, T; V)} \to 0$ as $n \to \infty$.

        First, suppose that $\|x(\cdot \wedge t)\|_\infty = 0$ for some $t \in [t_0, T]$.
        Then, we can take $\vartheta \in [t_0, T]$ such that $\|x(\cdot \wedge \vartheta)\|_\infty = 0$ and $\|x(\cdot \wedge t)\|_\infty > 0$ for all $t \in (\vartheta, T]$.
        Since
        \begin{equation*}
            \|\partial_x \Upsilon(t, x_n) - \partial_x \Upsilon(t, x) \|
            = \|\partial_x \Upsilon(t, x_n)\|
            \leq 4 \|x_n(t)\|
            \leq 4 \|x_n(t) - x(t)\|,
            \quad t \in [t_0, \vartheta],
        \end{equation*}
        we obtain
        \begin{equation*}
            \| \partial_x \varphi(\cdot, x_n) - \partial_x \varphi(\cdot, x) \|_{L^p(t_0, \vartheta; V)}
            \leq 4 \|x_n|_{(t_0, \vartheta)} - x|_{(t_0, \vartheta)} \|_{L^p(t_0, \vartheta; V)}
            \to 0
            \quad \text{as } n \to \infty.
        \end{equation*}
        Consequently, if $\vartheta = T$, we obtain the required convergence.
        Consider the case where $\vartheta < T$.
        For every $t \in (\vartheta, T]$ such that $x(t) \in V$ and every $n \in \mathbb{N}$, we derive
        \begin{align*}
            \|\partial_x \Upsilon(t, x_n) - \partial_x \Upsilon(t, x) \|
            & = \biggl\| 4 \frac{|x_n(t)|^2}{\|x_n(\cdot \wedge t)\|_\infty^2} x_n(t)
            - 4 \frac{|x(t)|^2}{\|x(\cdot \wedge t)\|_\infty^2} x(t) \biggr\| \\
            & = \biggl\| 4 \frac{|x_n(t)|^2}{\|x_n(\cdot \wedge t)\|_\infty^2} x_n(t)
            - 4 \frac{|x_n(t)|^2}{\|x_n(\cdot \wedge t)\|_\infty^2} x(t) \\
            & \quad + 4 \frac{|x_n(t)|^2}{\|x_n(\cdot \wedge t)\|_\infty^2} x(t)
            - 4 \frac{|x(t)|^2}{\|x(\cdot \wedge t)\|_\infty^2} x(t) \biggr\| \\
            & \leq 4 \frac{|x_n(t)|^2}{\|x_n(\cdot \wedge t)\|_\infty^2} \|x_n(t) - x(t)\| \\
            & \quad + 4 \biggl| \frac{|x_n(t)|^2}{\|x_n(\cdot \wedge t)\|_\infty^2}
            - \frac{|x(t)|^2}{\|x(\cdot \wedge t)\|_\infty^2} \biggr| \|x(t)\| \\
            & \leq 4 \|x_n(t) - x(t)\|
            + 4 \biggl| \frac{|x_n(t)|^2}{\|x_n(\cdot \wedge t)\|_\infty^2}
            - \frac{|x(t)|^2}{\|x(\cdot \wedge t)\|_\infty^2} \biggr| \|x(t)\|
        \end{align*}
        if $\|x_n(\cdot \wedge t)\|_\infty > 0$ and, if $\|x_n(\cdot \wedge t)\|_\infty = 0$,
        \begin{equation*}
            \|\partial_x \Upsilon(t, x_n) - \partial_x \Upsilon(t, x) \|
            = \|\partial_x \Upsilon(t, x)\|
            \leq 4 \|x(t)\|
            \leq 4 \|x_n(t) - x(t)\|.
        \end{equation*}
        Fix $\varepsilon > 0$ and, by the absolute continuity of the Lebesgue integral, choose $\vartheta_\ast \in (\vartheta, T)$ such that $\|x|_{(\vartheta, \vartheta_\ast)}\|_{L^p(\vartheta, \vartheta_\ast; V)} \leq \varepsilon$.
        Then, for every $t \in (\vartheta, \vartheta_\ast]$ such that $x(t) \in V$ and every $n \in \mathbb{N}$, we derive
        \begin{equation*}
            \|\partial_x \Upsilon(t, x_n) - \partial_x \Upsilon(t, x) \|
            \leq 4 \|x_n(t) - x(t)\| + 8 \|x(t)\|,
        \end{equation*}
        which yields
        \begin{align*}
            & \|\partial_x \Upsilon(\cdot, x_n) - \partial_x \Upsilon(\cdot, x) \|_{L^p(\vartheta, \vartheta_\ast; V)} \\
            & \quad \leq 4 \|x_n|_{(\vartheta, \vartheta_\ast)} - x|_{(\vartheta, \vartheta_\ast)}\|_{L^p(\vartheta, \vartheta_\ast; V)}
            + 8 \|x|_{(\vartheta, \vartheta_\ast)}\|_{L^p(\vartheta, \vartheta_\ast; V)} \\
            & \quad \leq 4 \|x_n|_{(\vartheta, \vartheta_\ast)} - x|_{(\vartheta, \vartheta_\ast)}\|_{L^p(\vartheta, \vartheta_\ast; V)}
            + 8 \varepsilon.
        \end{align*}
        Hence, there exists $N_1 \in \mathbb{N}$ such that, for every $n \geq N_1$,
        \begin{equation*}
            \|\partial_x \Upsilon(\cdot, x_n) - \partial_x \Upsilon(\cdot, x) \|_{L^p(\vartheta, \vartheta_\ast; V)}
            \leq 9 \varepsilon.
        \end{equation*}
        Further, consider $\alpha \coloneq \|x(\cdot \wedge \vartheta_\ast)\|_\infty$ and note that $\alpha > 0$ by the choice of $\vartheta$.
        Due to the convergence $\|x - x_n\|_\infty \to 0$ as $n \to \infty$, we can assume that $\|x_n(\cdot \wedge \vartheta_\ast)\|_\infty > \alpha / 2$ for every $n \in \mathbb{N}$.
        Therefore, for every $t \in [\vartheta_\ast, T]$ and every $n \in \mathbb{N}$,
        \begin{align*}
            &  \biggl| \frac{|x_n(t)|^2}{\|x_n(\cdot \wedge t)\|_\infty^2}
            - \frac{|x(t)|^2}{\|x(\cdot \wedge t)\|_\infty^2} \biggr|
            \leq \frac{\bigl| |x_n(t)|^2 \|x(\cdot \wedge t)\|_\infty^2 - |x(t)|^2 \|x_n(\cdot \wedge t)\|_\infty^2 \bigr|}
            {\|x_n(\cdot \wedge t)\|_\infty^2 \|x(\cdot \wedge t)\|_\infty^2} \\
            & \quad \leq \frac{4}{\alpha^4}
            \bigl| |x_n(t)|^2 \|x(\cdot \wedge t)\|_\infty^2 - |x(t)|^2 \|x_n(\cdot \wedge t)\|_\infty^2 \bigr| \\
            & \quad = \frac{4}{\alpha^4}
            \bigl| (|x_n(t)|^2 - |x(t)|^2) \|x(\cdot \wedge t)\|_\infty^2
            + |x(t)|^2 (\|x(\cdot \wedge t)\|_\infty^2 - \|x_n(\cdot \wedge t)\|_\infty^2)\bigr|  \\
            & \quad \leq \frac{4 \|x\|_\infty^2}{\alpha^4} (\|x\|_\infty + \|x_n\|_\infty)
            \bigl( |x_n(t) - x(t)|
            + \bigl| \|x(\cdot \wedge t)\|_\infty - \|x_n(\cdot \wedge t)\|_\infty \bigr|
            \bigr) \\
            & \quad \leq \frac{8 \|x\|_\infty^2}{\alpha^4} (\|x\|_\infty + \|x_n\|_\infty)
            \|x - x_n\|_\infty.
        \end{align*}
        Hence, we get the convergence
        \begin{align*}
            & \|\partial_x \Upsilon(\cdot, x_n) - \partial_x \Upsilon(\cdot, x) \|_{L^p(\vartheta_\ast, T; V)} \\
            & \quad \leq 4 \|x_n - x\|_{L^p(\vartheta_\ast, T; V)}
            + 4 \biggl( \int_{\vartheta_\ast}^{T} \biggl| \frac{|x_n(t)|^2}{\|x_n(\cdot \wedge t)\|_\infty^2}
            - \frac{|x(t)|^2}{\|x(\cdot \wedge t)\|_\infty^2} \biggr|^p \|x(t)\|^p \, dt \biggr)^{1 / p} \\
            & \quad \leq 4 \|x_n - x\|_{L^p(\vartheta_\ast, T; V)}
            + \frac{32 \|x\|_\infty^2}{\alpha^4} (\|x\|_\infty + \|x_n\|_\infty)
            \|x - x_n\|_\infty \|x\|_{L^p(\vartheta_\ast, T; V)}
            \to 0
        \end{align*}
        as $n \to \infty$.
        Therefore, there exists $N_2 \in \mathbb{N}$ such that, for every $n \geq N_2$,
        \begin{equation*}
            \|\partial_x \Upsilon(\cdot, x_n) - \partial_x \Upsilon(\cdot, x) \|_{L^p(\vartheta_\ast, T; V)}
            \leq \varepsilon.
        \end{equation*}
        As a result, for every $n \in \mathbb{N}$ with $n \geq N_1$ and $n \geq N_2$,
        \begin{align*}
            & \|\partial_x \Upsilon(\cdot, x_n) - \partial_x \Upsilon(\cdot, x) \|_{L^p(\vartheta, T; V)} \\
            & \quad \leq \|( \partial_x \Upsilon(\cdot, x_n) - \partial_x \Upsilon(\cdot, x))
            \mathbf{1}_{(\vartheta, \vartheta_\ast)}(\cdot) \|_{L^p(\vartheta, T; V)} \\
            & \qquad + \|(\partial_x \Upsilon(\cdot, x_n) - \partial_x \Upsilon(\cdot, x))
            \mathbf{1}_{(\vartheta_\ast, T)}(\cdot) \|_{L^p(\vartheta, T; V)} \\
            & \quad = \|\partial_x \Upsilon(\cdot, x_n) - \partial_x \Upsilon(\cdot, x) \|_{L^p(\vartheta, \vartheta_\ast; V)}
            + \|\partial_x \Upsilon(\cdot, x_n) - \partial_x \Upsilon(\cdot, x)\|_{L^p(\vartheta_\ast, T; V)} \\
            & \quad \leq 10 \varepsilon,
        \end{align*}
        which proves that $\|\partial_x \Upsilon(\cdot, x_n) - \partial_x \Upsilon(\cdot, x) \|_{L^p(\vartheta, T; V)} \to 0$ as $n \to \infty$.
        So, we have the required convergence $\|\partial_x \Upsilon(\cdot, x_n) - \partial_x \Upsilon(\cdot, x) \|_{L^p(t_0, T; V)} \to 0$ as $n \to \infty$.

        Now, let $\|x(\cdot \wedge t)\|_\infty > 0$ for all $t \in [t_0, T]$.
        Arguing similarly as above, we can assume that there exists $\alpha > 0$ such that $\|x(\cdot \wedge t)\|_\infty \geq \alpha$ and $\|x_n(\cdot \wedge t)\|_\infty \geq \alpha / 2$ for every $t \in [t_0, T]$ and every $n \in \mathbb{N}$.
        Then, for every $t \in [t_0, T]$ such that $x(t) \in V$ and every $n \in \mathbb{N}$, we derive
        \begin{align*}
            & \|\partial_x \Upsilon(t, x_n) - \partial_x \Upsilon(t, x) \| \\
            & \quad \leq 4 \|x_n(t) - x(t)\|
            + 4 \biggl| \frac{|x_n(t)|^2}{\|x_n(\cdot \wedge t)\|_\infty^2}
            - \frac{|x(t)|^2}{\|x(\cdot \wedge t)\|_\infty^2} \biggr| \|x(t)\| \\
            & \quad \leq 4 \|x_n(t) - x(t)\|
            + \frac{32 \|x\|_\infty^2}{\alpha^4} (\|x\|_\infty + \|x_n\|_\infty)
            \|x - x_n\|_\infty \|x(t)\|.
        \end{align*}
        Hence, we get the required convergence
        \begin{align*}
            & \|\partial_x \Upsilon(\cdot, x_n) - \partial_x \Upsilon(\cdot, x) \|_{L^p(t_0, T; V)} \\
            & \quad \leq 4 \|x_n - x\|_{L^p(t_0, T; V)}
            + \frac{32 \|x\|_\infty^2}{\alpha^4} (\|x\|_\infty + \|x_n\|_\infty)
            \|x - x_n\|_\infty \|x\|_{L^p(t_0, T; V)}
            \to 0
        \end{align*}
        as $n \to \infty$.
        The proof is complete.
    \end{proof}

    We also need the following strengthening of Proposition \ref{proposition_psi_c11V}.
    \begin{proposition} \label{proposition_nuEpsilon_c11V}
        Let $\hat{\nu} \colon [0, T] \times [0, \infty) \to \mathbb{R}$ be a continuously differentiable function.
        Then, the function
        \begin{equation*}
            \nu(t, x)
            \coloneq \hat{\nu}(t, \Upsilon(t, x)),
            \quad (t, x) \in [0, T] \times C([0, T], H),
        \end{equation*}
        belongs to $\mathcal{C}^{1, 1}_V([0, T] \times C([0, T], H))$ and its path-derivatives are given by
        \begin{equation} \label{path-derivatives_nu}
            \partial_t \nu(t, x)
            = \frac{\partial\hat{\nu}}{\partial t}(t, \Upsilon(t, x)),
            \quad \partial_x \nu(t, x)
            = \frac{\partial\hat{\nu}}{\partial r}(t, \Upsilon(t, x)) \partial_x \Upsilon(t, x)
        \end{equation}
        for all $(t, x) \in [0, T] \times C([0, T], H)$.
    \end{proposition}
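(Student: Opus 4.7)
The plan is to verify the three assumptions of Proposition \ref{proposition_ci-smooth_W_A3} for $\nu$ with the candidate path derivatives given by \eqref{path-derivatives_nu}. Proposition \ref{proposition_psi_c11V} already supplies all the relevant information about $\Upsilon$: it is $ci$-smooth, its $ci$-derivatives coincide with its path derivatives $\partial_t \Upsilon$, $\partial_x \Upsilon$ on $[0,T)\times C([0,T],H)$, and the latter are continuous (and moreover satisfy assumption (iii) of Proposition \ref{proposition_ci-smooth_W_A3}).

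First I would establish the $ci$-smoothness of $\nu$ and identify its $ci$-derivatives. Fix $(t_0, x_0) \in [0, T) \times C([0, T], H)$ and an admissible extension $x$ with $x|_{[t_0,T]} \in C^1([t_0, T], V)$. Applying the first-order Taylor expansion of the $C^1$ function $\hat{\nu}$ at the point $(t_0, \Upsilon(t_0, x_0))$ and then substituting the $ci$-expansion \eqref{ci_derivatives_definition_A3} of $\Upsilon$ (noting $\partial_t^{\, ci} \Upsilon(t_0, x_0) = 0$ from \eqref{Psi_derivatives_A}), together with the bound $|\Upsilon(t,x) - \Upsilon(t_0,x_0)| = O(t-t_0)$ that again comes from $\Upsilon$ being $ci$-differentiable along $x$, one obtains
\begin{equation*}
    \nu(t,x) - \nu(t_0,x_0)
    = \tfrac{\partial \hat{\nu}}{\partial t}(t_0, \Upsilon(t_0,x_0))(t-t_0)
    + \bigl(\tfrac{\partial \hat{\nu}}{\partial r}(t_0,\Upsilon(t_0,x_0))\,\partial_x^{\,ci}\Upsilon(t_0,x_0), x(t) - x_0(t_0)\bigr) + o(t-t_0),
\end{equation*}
which identifies $\partial_t^{\,ci}\nu$ and $\partial_x^{\,ci}\nu$ as the right-hand sides of \eqref{path-derivatives_nu}. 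Continuity of these $ci$-derivatives on $[0,T)\times C([0,T],H)$ follows from continuity of $\hat{\nu}$, its partial derivatives, $\Upsilon$, and $\partial_x \Upsilon$. This simultaneously verifies assumption (i) of Proposition \ref{proposition_ci-smooth_W_A3}, since the same formulas extend continuously to $t = T$ thanks to continuity of $\Upsilon$ and $\partial_x \Upsilon$ on all of $[0, T] \times C([0, T], H)$. Assumption (ii) is immediate: whenever $x(t) \in V$, the vector $\partial_x \Upsilon(t,x)$ is a scalar multiple of $x(t)$ (by the formula in \eqref{Psi_derivatives_A}), hence lies in $V$, and multiplication by the scalar $\frac{\partial \hat{\nu}}{\partial r}(t,\Upsilon(t,x))$ preserves this.

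The main work is assumption (iii). Given $t_0 \in [0, T)$ and sequences $x, (x_n)_n$ as in the hypothesis, set $a_n(t) \coloneq \frac{\partial \hat{\nu}}{\partial r}(t, \Upsilon(t, x_n))$ and $a(t) \coloneq \frac{\partial \hat{\nu}}{\partial r}(t, \Upsilon(t, x))$, so that $\partial_x \nu(\cdot, x_n) = a_n \,\partial_x \Upsilon(\cdot, x_n)$ and $\partial_x \nu(\cdot, x) = a \,\partial_x \Upsilon(\cdot, x)$. The set $K \coloneq \{x\} \cup \{x_n : n \in \mathbb{N}\}$ is compact in $C([0,T],H)$, hence $\Upsilon$ is uniformly continuous on $[t_0,T] \times K$, which together with $\|x - x_n\|_\infty \to 0$ yields $\sup_{t \in [t_0,T]} |\Upsilon(t, x_n) - \Upsilon(t, x)| \to 0$. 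Uniform continuity of $\frac{\partial \hat{\nu}}{\partial r}$ on the resulting bounded range then gives $\|a_n - a\|_{C([t_0,T])} \to 0$ and uniform boundedness $\sup_n \|a_n\|_{C([t_0,T])} < \infty$. Combining with the convergence and boundedness of $\partial_x \Upsilon(\cdot, x_n)$ in $L^p(t_0, T; V)$ from Proposition \ref{proposition_psi_c11V} and its proof, the elementary estimate
\begin{equation*}
    \|a_n \,\partial_x \Upsilon(\cdot, x_n) - a \,\partial_x \Upsilon(\cdot, x)\|_{L^p(t_0, T; V)}
    \leq \|a_n - a\|_{C([t_0,T])} \|\partial_x \Upsilon(\cdot, x_n)\|_{L^p(t_0, T; V)}
    + \|a\|_{C([t_0,T])} \|\partial_x \Upsilon(\cdot, x_n) - \partial_x \Upsilon(\cdot, x)\|_{L^p(t_0, T; V)}
\end{equation*}
proves both the required convergence and the uniform bound for $\partial_x \nu(\cdot, x_n)$ in $L^p(t_0, T; V)$.

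The step I expect to be most delicate is the uniform-in-$t$ convergence of $\Upsilon(\cdot, x_n)$ to $\Upsilon(\cdot, x)$ on $[t_0, T]$ (as opposed to mere pointwise convergence), which is what allows the scalar factor $a_n$ to be separated from the $V$-valued factor $\partial_x \Upsilon(\cdot, x_n)$ in the product rule; once this is handled via the compactness argument above, everything else reduces to bookkeeping.
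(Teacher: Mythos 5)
Your proposal is correct and follows essentially the same route as the paper: reduce everything to Proposition \ref{proposition_ci-smooth_W_A3}, identify the $ci$-derivatives of $\nu$ by composing the chain rule for $\hat{\nu}$ with the $ci$-expansion of $\Upsilon$, and verify assumption (iii) via the product-rule estimate separating the scalar factor $\frac{\partial\hat{\nu}}{\partial r}(\cdot,\Upsilon(\cdot,x_n))$ (which converges uniformly and is uniformly bounded) from the $V$-valued factor $\partial_x\Upsilon(\cdot,x_n)$ (whose $L^p$-convergence and boundedness come from the proof of Proposition \ref{proposition_psi_c11V}). The only cosmetic difference is that the paper derives $ci$-smoothness from the integral representation of $t\mapsto\hat{\nu}(t,\Upsilon(t,x))$ for $C^1$ extensions rather than a direct Taylor expansion, and it states the uniform convergence of $\Upsilon(\cdot,x_n)$ without the explicit compactness argument you supply.
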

    \begin{proof}
        Since $\hat{\nu}$ is continuously differentiable, $\Upsilon \in \mathcal{C}^{1, 1}_V([0, T] \times C([0, T], H))$ by Proposition \ref{proposition_psi_c11V}, and $\partial_t \Upsilon(t, x) = 0$ for all $(t, x) \in [0, T] \times C([0, T], H)$, and noting that $(t_0, x_0) \in [0, T] \times C([0, T], H)$ with $x \coloneq x_0 \vert_{[t_0, T]}\in C^1([t_0, T], V)$ implies continuous differentiability of $t \mapsto \mu(t) \coloneq \Upsilon(t, x)$, $[t_0, T] \to \mathbb{R}$, (cf. the proof of Proposition \ref{proposition_ci-smooth_A3}) and thus, for every $t \in [t_0, T]$,
        \begin{equation*}
            \nu(t, x)
            = \hat{\nu}(t, \mu(t))
            = \int_{t_0}^t \biggl( \frac{\partial\hat{\nu}}{\partial t}(s, \mu(s))
            + \frac{\partial\hat{\nu}}{\partial r} (s, \mu(s)) \dot{\mu}(s) \biggr) \, ds
            + \nu(t_0, x_0),
        \end{equation*}
        we conclude (see also Remarks \ref{remark_ci-derivatives_equivalent} and \ref{remark_ci-derivatives_path-derivatives}) that $\nu$ is $ci$-smooth, its $ci$-derivatives are given by
        \begin{equation*}
            \partial_t^{\, ci} \nu(t, x)
            = \frac{\partial\hat{\nu}}{\partial t}(t, \Upsilon(t, x)),
            \quad \partial_x^{\, ci} \nu(t, x)
            = \frac{\partial\hat{\nu}}{\partial r}(t, \Upsilon(t, x)) \partial_x \Upsilon(t, x)
        \end{equation*}
        for all $(t, x) \in [0, T) \times C([0, T], H)$, the functions $\partial_t \nu$ and $\partial_x \nu$ from \eqref{path-derivatives_nu} are continuous, and, for every $x \in C([0, T], H)$ with $x(t) \in V$ for a.e. $t \in [0, T]$, it holds that $\partial_x \nu(t, x) \in V$ for a.e. $t \in [0, T]$.
        So, in order to apply Proposition \ref{proposition_ci-smooth_W_A3}, it remains to verify assumption (iii).
        Fix $t_0 \in [0, T)$ and $x \in C([0, T], H)$ with $x\vert_{(t_0, T)} \in W_{p q}(t_0, T)$ and consider a sequence $(x_n)_n$ in $C([0, T], H)$ such that $x_n\vert_{[t_0, T]} \in C^1([t_0, T], V)$ for every $n \in \mathbb{N}$ and
        \begin{equation*}
            \|x\vert_{(t_0, T)} - x_n\vert_{(t_0, T)}\|_{W_{p q}(t_0, T)}
            + \|x - x_n\|_\infty
            \to 0
            \quad \text{as } n \to \infty.
        \end{equation*}
        In the proof of Proposition \ref{proposition_ci-smooth_W_A3}, it is established that $\Upsilon$ satisfies assumption (iii), which implies that $\| \partial_x \Upsilon (\cdot, x_n) - \partial_x \Upsilon(\cdot, x) \|_{L^p(t_0, T; V)} \to 0$ as $n \to \infty$ and there is a number $L > 0$ such that
        \begin{equation*}
            \|\partial_x \Upsilon(\cdot, x_n)\|_{L^p(t_0, T; V)}
            \leq L,
            \quad n \in \mathbb{N}.
        \end{equation*}
        Since $\|x - x_n\|_\infty \to 0$ as $n \to \infty$, we have
        \begin{equation*}
            \max_{s \in [0, T]} \biggl| \frac{\partial\hat{\nu}}{\partial r}(s, \Upsilon(s, x_n))
            - \frac{\partial\hat{\nu}}{\partial r}(s, \Upsilon(s, x)) \biggr|
            \to 0
            \quad \text{as } n \to \infty
        \end{equation*}
        and there exists $M > 0$ such that
        \begin{equation*}
            \max_{s \in [0, T]} \biggl| \frac{\partial\hat{\nu}}{\partial r}(s, \Upsilon(s, x_n)) \biggr|
            \leq M,
            \quad n \in \mathbb{N}.
        \end{equation*}
        Hence,
        \begin{equation*}
            \|\partial_x \nu(\cdot, x_n)\|_{L^p(t_0, T; V)}
            \leq M \|\partial_x \Upsilon(\cdot, x_n)\|_{L^p(t_0, T; V)}
            \leq M L,
            \quad n \in \mathbb{N},
        \end{equation*}
        and, for every $t \in [0, T]$ such that $x(t) \in V$ and every $n \in \mathbb{N}$,
        \begin{align*}
            \| \partial_x \nu (t, x_n) - \partial_x \nu(t, x) \|
            & \leq \biggl\| \frac{\partial\hat{\nu}}{\partial r}(t, \Upsilon(t, x_n)) \partial_x \Upsilon(t, x_n)
            - \frac{\partial\hat{\nu}}{\partial r}(t, \Upsilon(t, x)) \partial_x \Upsilon(t, x_n) \biggr\| \\
            & \quad + \biggl\| \frac{\partial\hat{\nu}}{\partial r}(t, \Upsilon(t, x)) \partial_x \Upsilon(t, x_n)
            - \frac{\partial\hat{\nu}}{\partial r}(t, \Upsilon(t, x)) \partial_x \Upsilon(t, x) \biggr\| \\
            & \leq \biggl| \frac{\partial\hat{\nu}}{\partial r}(t, \Upsilon(t, x_n))
            - \frac{\partial\hat{\nu}}{\partial r}(t, \Upsilon(t, x)) \biggr|
            \|\partial_x \Upsilon(t, x_n)\| \\
            & \quad + \biggl| \frac{\partial\hat{\nu}}{\partial r}(t, \Upsilon(t, x)) \biggr|
            \| \partial_x \Upsilon(t, x_n) - \partial_x \Upsilon(t, x) \|,
        \end{align*}
        which gives
        \begin{align*}
            & \| \partial_x \nu (\cdot, x_n) - \partial_x \nu(\cdot, x) \|_{L^p(t_0, T; V)} \\
            & \quad \leq L \max_{s \in [0, T]} \biggl| \frac{\partial\hat{\nu}}{\partial r}(s, \Upsilon(s, x_n))
            - \frac{\partial\hat{\nu}}{\partial r}(s, \Upsilon(s, x)) \biggr| \\*
            & \qquad + M \| \partial_x \Upsilon(\cdot, x_n) - \partial_x \Upsilon(\cdot, x) \|_{L^p(t_0, T; V)}
            \to 0
        \end{align*}
        as $n \to \infty$.
        This concludes the proof.
    \end{proof}

\section{Proof of Theorem \ref{theorem_minimax_existence_uniqueness}}
\label{S:proof}

    In the auxiliary statements below, we assume that the hypotheses $\mathbf{H}(A)$, $\mathbf{H}(F)$, and $\mathbf{H}(h)$ are satisfied.

    We start with the following comparison result.

    \begin{lemma} \label{L:DoubledComparison}
        Fix a number $L \geq 0$ and a function $w \colon [0, T] \times C([0, T], H)\times C([0, T], H) \to \mathbb{R}$.
        Suppose that the restrictions of $w$ to the sets $[t_0, T] \times \mathcal{X}^L(t_0, x_0) \times
        \mathcal{X}^L(t_0, x_0)$, $(t_0, x_0)\in [0, T) \times C([0, T], H)$, are upper semi-continuous,
        $w$ satisfies the inequality $w(T, x, x) \leq 0$ for all $x \in C([0, T], H)$, and  $w$ possesses the following property:
        for every point $(t_0, x_0, y_0) \in [0, T) \times C([0, T], H)\times C([0, T], H)$ and every function $\varphi \in \mathcal{C}^{1, 1}_V([t_0, T] \times C([0, T], H) \times C([0, T], H))$ such that, for some $T_0 \in (t_0, T]$,
        \begin{equation} \label{lemma_condition}
            0
            = (\varphi - w)(t_0, x_0, y_0)
            = \inf_{(t, x, y) \in [t_0, T_0] \times \mathcal{X}^L(t_0, x_0) \times \mathcal{X}^L(t_0, y_0)} (\varphi - w)(t, x, y),
        \end{equation}
        there exists a pair $(x, y) \in \mathcal{X}^L(t_0, x_0) \times \mathcal{X}^L(t_0, y_0)$ such that
        \begin{align*}
            & \partial_t \varphi(t_0, x_0, y_0) \\
            & \quad + \limsup_{\delta \to 0^+} \frac{1}{\delta} \int_{t_0}^{t_0 + \delta}
            \bigl( - \langle A(t, x(t)), \partial_x \varphi(t, x, y) \rangle
            - \langle A(t, y(t)), \partial_y \varphi(t, x, y) \rangle \bigr) \, dt \\
            & \quad + F(t_0, x_0, \partial_x \varphi(t_0, x_0, y_0))
            - F(t_0, y_0, - \partial_y \varphi(t_0, x_0, y_0))
            \geq 0.
        \end{align*}
        Then, the inequality $w(t, x, x) \leq 0$ holds for all $(t, x) \in [0, T] \times C([0, T], H)$.
    \end{lemma}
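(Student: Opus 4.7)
The plan is to argue by contradiction using the classical variable-doubling technique with the Zhou function $\Upsilon$ from \eqref{Upsilon} as the central penalty, in the spirit of \cite{Zhou_2020,Gomoyunov_Lukoyanov_Plaksin_2021}. Suppose for contradiction that $w(t_0, x_0, x_0) > \sigma > 0$ for some $(t_0, x_0) \in [0, T) \times C([0, T], H)$ and some $\sigma > 0$. Fix $\lambda \in (0, \sigma / (T - t_0))$ and consider
\begin{equation*}
    \Psi_\varepsilon(t, x, y) \coloneq w(t, x, y) - \tfrac{1}{\varepsilon} \Upsilon(t, x - y) - \lambda(T - t)
\end{equation*}
on the compact set $K \coloneq [t_0, T] \times \mathcal{X}^L(t_0, x_0)^2$. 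By the hypothesis, $w|_K$ is upper semi-continuous, and by Proposition \ref{proposition_psi_c11V} the function $\Upsilon$ is continuous; hence $\Psi_\varepsilon$ attains its maximum $M_\varepsilon$ at some $(\bar t_\varepsilon, \bar x_\varepsilon, \bar y_\varepsilon) \in K$. Since $\Psi_\varepsilon(t_0, x_0, x_0) = w(t_0, x_0, x_0) - \lambda(T - t_0) > 0$, one has $M_\varepsilon > 0$.

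A standard extraction exploiting the coercivity $\Upsilon(t, \xi) \geq \|\xi(\cdot \wedge t)\|_\infty^2$ yields $\Upsilon(\bar t_\varepsilon, \bar x_\varepsilon - \bar y_\varepsilon) / \varepsilon \to 0$ and $\|\bar x_\varepsilon(\cdot \wedge \bar t_\varepsilon) - \bar y_\varepsilon(\cdot \wedge \bar t_\varepsilon)\|_\infty \to 0$ as $\varepsilon \to 0^+$; combined with $M_\varepsilon > 0$ and the diagonal boundary condition $w(T, \cdot, \cdot) \leq 0$ (together with an appropriate modulus-of-continuity control at $t = T$ that is to be encoded in the construction of $w$ in Section \ref{S:proof}), this forces $\bar t_\varepsilon < T$ for small $\varepsilon > 0$. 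Now set
\begin{equation*}
    \varphi(t, x, y) \coloneq \tfrac{1}{\varepsilon} \Upsilon(t, x - y) + \lambda(T - t) + M_\varepsilon.
\end{equation*}
The two-variable analogue of Proposition \ref{proposition_psi_c11V} (which follows from the single-variable case, since $(x, y) \mapsto x - y$ is linear and continuous and the $L^p$-convergence arguments transfer directly) gives $\varphi \in \mathcal{C}^{1,1}_V([\bar t_\varepsilon, T] \times C([0, T], H)^2)$. By construction $0 = (\varphi - w)(\bar t_\varepsilon, \bar x_\varepsilon, \bar y_\varepsilon) = \inf (\varphi - w)$ over $[\bar t_\varepsilon, T] \times \mathcal{X}^L(\bar t_\varepsilon, \bar x_\varepsilon) \times \mathcal{X}^L(\bar t_\varepsilon, \bar y_\varepsilon) \subset K$. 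The lemma's hypothesis then supplies $x \in \mathcal{X}^L(\bar t_\varepsilon, \bar x_\varepsilon)$ and $y \in \mathcal{X}^L(\bar t_\varepsilon, \bar y_\varepsilon)$ for which the displayed inequality holds.

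Compute $\partial_x \varphi = \varepsilon^{-1} \partial_x \Upsilon(t, x - y) = -\partial_y \varphi$, and note from \eqref{Psi_derivatives_A} that $\partial_x \Upsilon(s, x - y)$ is a nonnegative scalar multiple of $x(s) - y(s)$. Monotonicity $\mathbf{H}(A)(ii)$ therefore gives
\begin{equation*}
    -\langle A(s, x(s)), \partial_x \varphi \rangle - \langle A(s, y(s)), \partial_y \varphi \rangle = -\langle A(s, x(s)) - A(s, y(s)), \partial_x \varphi \rangle \leq 0.
\end{equation*}
Dropping this nonpositive integrand and using $\partial_t \varphi = -\lambda$, the hypothesis reduces to
\begin{equation*}
    F(\bar t_\varepsilon, \bar x_\varepsilon, z_\varepsilon) - F(\bar t_\varepsilon, \bar y_\varepsilon, z_\varepsilon) \geq \lambda, \qquad z_\varepsilon \coloneq \varepsilon^{-1} \partial_x \Upsilon(\bar t_\varepsilon, \bar x_\varepsilon - \bar y_\varepsilon).
\end{equation*}
Writing $z_\varepsilon = \tilde\varepsilon^{-1}(\bar x_\varepsilon(\bar t_\varepsilon) - \bar y_\varepsilon(\bar t_\varepsilon))$ with the positive scalar $\tilde\varepsilon$ read off \eqref{Psi_derivatives_A}, applying $\mathbf{H}(F)(iii)$, and using $|\xi(t)| \leq \|\xi(\cdot \wedge t)\|_\infty$ together with the elementary bound $\Upsilon(t, \xi) \geq 2|\xi(t)|^2$, one estimates the argument of the modulus by $2\Upsilon(\bar t_\varepsilon, \bar x_\varepsilon - \bar y_\varepsilon)/\varepsilon + \|\bar x_\varepsilon(\cdot \wedge \bar t_\varepsilon) - \bar y_\varepsilon(\cdot \wedge \bar t_\varepsilon)\|_\infty$, which tends to $0$ by the preceding paragraph. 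This contradicts the lower bound $\lambda > 0$.

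The main technical obstacle is the smoothness of the penalty on the \emph{doubled} path space: the function $(t, x, y) \mapsto \Upsilon(t, x - y)$ must lie in $\mathcal{C}^{1,1}_V([t_0, T] \times C([0, T], H)^2)$ for the lemma's hypothesis to be applicable. This is precisely why the $ci$-differentiability framework of Section \ref{S:Functional_chain_rule} was developed; the necessary two-variable statement should follow by composing the single-variable Proposition \ref{proposition_psi_c11V} with the linear, continuous substitution $(x, y) \mapsto x - y$. A secondary subtlety is ensuring $\bar t_\varepsilon < T$, which requires auxiliary boundary information on $w$ beyond the bare diagonal condition $w(T, x, x) \leq 0$ (e.g.\ an $h$-modulus bound of the form $w(T, x, y) \leq m_h(\|x - y\|_\infty)$ inherited from the construction of $w$ via the five-step scheme).
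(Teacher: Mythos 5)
Your proposal follows essentially the same route as the paper: contradiction, variable doubling with the penalty $\varepsilon^{-1}\Upsilon(t,x-y)$ plus a linear-in-$t$ term, the two-variable $\mathcal{C}^{1,1}_V$ regularity of the penalty via Proposition \ref{proposition_psi_c11V}, monotonicity of $A$ to discard the operator term, and $\mathbf{H}(F)$(iii) with the $\varkappa$-equivalence \eqref{varkappa} to reach the contradiction. One correction: the auxiliary boundary information you claim is needed to force $\bar t_\varepsilon < T$ (an $h$-modulus bound $w(T,x,y)\le m_h(\|x-y\|_\infty)$) is not required, and assuming it would weaken the lemma as stated. The standard limit-point argument suffices: along a convergent subsequence $k_\varepsilon \to \hat k = (\hat t,\hat x,\hat y)$ one has $\Psi(\hat k)=0$, hence $\hat x(s)=\hat y(s)$ for all $s\le\hat t$; if $\hat t=T$ this means $\hat x=\hat y$ as elements of $C([0,T],H)$, so upper semi-continuity gives $0 < M_0/2 \le \limsup_\varepsilon M_\varepsilon \le w(T,\hat x,\hat x) \le 0$, a contradiction using only the bare diagonal condition. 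With that repair your argument matches the paper's proof.
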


    The set $\mathcal{C}^{1, 1}_V([t_0, T] \times C([0, T], H) \times C([0, T], H))$ in Lemma \ref{L:DoubledComparison} is defined similarly to $\mathcal{C}^{1, 1}_V([t_0, T] \times C([0, T], H))$.
    See also \cite[Definition 2.16]{Bayraktar_Keller_2018}.
    Note that the regularity and the key property of the function $w$ required in Lemma \ref{L:DoubledComparison} can be considered as a definition of a viscosity subsolution of some doubled equation \cite[Definition 4.1]{Bayraktar_Keller_2018}.
    This definition differs from Definition \ref{definition_viscosity}(i).

    \begin{proof}[Proof of Lemma \ref{L:DoubledComparison}]
        The proof follows the same lines as \cite[Theorem 4.2]{Bayraktar_Keller_2018} but we use here a different penalty functional $\Psi$ (in this connection, see also the proof of \cite[Theorem 5.3]{Bandini_Keller_2025}).

        Arguing by contradiction, assume that there is a point $(t_0, x_0) \in [0, T) \times C([0,T],H)$
        such that $M_0 \coloneq w(t_0, x_0, x_0) > 0$.
        For every $t \in [t_0, T]$ and every $x$, $y \in C([0, T], H)$, define\footnote{Note that the second term in the definition of $\Psi$ is $2 |x(t) - y(t)|^2$ instead of the usual $|x(t) - y(t)|^2$ (see, e.g., \cite{Zhou_2020,Gomoyunov_Lukoyanov_Plaksin_2021,Bandini_Keller_2025}).
        This is done to ensure that $\theta$ from \eqref{Psi_derivatives} and \eqref{theta} is non-negative, which is important when we use the monotonicity of $A$ and condition \eqref{condition_F} in \eqref{proof_Theorem_1}.}
        \begin{equation*}
            \Psi(t, x, y)
            \coloneq
            \Upsilon(t, x - y)
            = \frac{\bigl( \|x(\cdot \wedge t) - y(\cdot \wedge t)\|_\infty^2 - |x(t) - y(t)|^2 \bigr)^2}
            {\|x(\cdot \wedge t) - y(\cdot \wedge t)\|_\infty^2} + 2 |x(t) - y(t)|^2
        \end{equation*}
        if $\|x(\cdot \wedge t) - y(\cdot \wedge t)\|_\infty > 0$ and $\Psi(t, x, y) \coloneq \Upsilon(t, x - y) = 0$ otherwise, where $\Upsilon$ is given by \eqref{Upsilon}.
        Since $\Upsilon \in \mathcal{C}^{1, 1}_V([0, T] \times C([0, T], H))$ by Proposition \ref{proposition_psi_c11V}, we have $\Psi \in \mathcal{C}^{1, 1}_V([t_0, T] \times C([0, T], H) \times C([0, T], H))$ and, for every $t \in [t_0, T)$ and every $x$, $y \in C([0, T], H)$,
        \begin{equation} \label{Psi_derivatives}
            \partial_t \Psi(t, x, y)
            = 0,
            \quad \partial_x \Psi(t, x, y)
            = \theta(t, x, y) (x(t) - y(t)),
        \end{equation}
        where
        \begin{equation} \label{theta}
            \theta(t, x, y)
            \coloneqq \frac{4 |x(t) - y(t)|^2}
            {\|x(\cdot \wedge t) - y(\cdot \wedge t)\|_\infty^2}
        \end{equation}
        if $\|x(\cdot \wedge t) - y(\cdot \wedge t)\|_\infty > 0$ and $\theta(t, x, y) \coloneq 0$ otherwise.
        Note also that, by \cite[Lemma 2.8]{Zhou_2020},
        \begin{equation} \label{varkappa}
            \varkappa \|x(\cdot \wedge t) - y(\cdot \wedge t)\|_\infty^2
            \leq \Psi(t, x, y)
            \leq 3 \|x(\cdot \wedge t) - y(\cdot \wedge t)\|_\infty^2,
            \quad \varkappa \coloneq \frac{3 - \sqrt{5}}{2},
        \end{equation}
        for all $t \in [t_0, T]$ and $x$, $y \in C([0, T], H)$.

        For every $\varepsilon > 0$, define a map
        $\Phi_\varepsilon \colon [t_0, T] \times \mathcal{X}^L(t_0, x_0) \times \mathcal{X}^L(t_0, x_0) \to \mathbb{R}$ by
        \begin{equation*}
            \Phi_\varepsilon(t, x, y)
            \coloneq w(t, x, y) - \frac{M_0 (T - t)}{2 (T - t_0)}
            - \frac{1}{\varepsilon} \Psi(t, x, y),
            \quad t \in [t_0, T], \ x, y \in \mathcal{X}^L(t_0,x_0).
        \end{equation*}
        Fix a point $k_\varepsilon \coloneq (t_\varepsilon, x_\varepsilon, y_\varepsilon)$ at which the upper semi-continuous map $\Phi_\varepsilon$ attains a maximum (recall that the domain of $\Phi_\varepsilon$ is compact).
        Note that
        \begin{equation} \label{M_varepsilon}
            M_\varepsilon
            \coloneq \Phi_\varepsilon(k_\varepsilon)
            \geq \Phi_\varepsilon(t_0, x_0, x_0)
            = M_0 / 2
            > 0.
        \end{equation}
        It can be shown that (cf. \cite[Proposition 3.7]{Crandall_Ishii_Lions_1992} and (4.3) from  \cite{Bayraktar_Keller_2018})
        \begin{equation*}
            \lim_{\varepsilon \to 0^+} \frac{1}{\varepsilon} \Psi(k_\varepsilon)
            = 0,
            \quad \Psi(\hat{k})
            = 0,
            \quad \lim_{\varepsilon \to 0^+} M_\varepsilon
            = w(t, \hat{x}, \hat{y}) - \frac{M_0 (T - \hat{t})}{2 (T - t_0)}
        \end{equation*}
        for every limiting point $\hat{k} \coloneq (\hat{t}, \hat{x}, \hat{y})$ of $k_\varepsilon$ as $\varepsilon \to 0^+$.
        Note that $\Psi(\hat{k}) = 0$ implies that $\hat{x}(t) = \hat{y}(t)$ for all $t \in [0, \hat{t}]$.
        Then, using \eqref{M_varepsilon} and the inequality $w(T, \hat{x}, \hat{x}) \leq 0$, we derive $\hat{t} < T$ and, hence, there exists $\varepsilon_\ast > 0$ such that $t_\varepsilon < T$ for all $\varepsilon \in (0, \varepsilon_\ast]$.

        For every $\varepsilon \in (0, \varepsilon_\ast]$, define a map $\varphi_\varepsilon \colon [t_\varepsilon, T] \times C([0, T], H) \times C([0, T], H) \to \mathbb{R}$ by
        \begin{equation*}
            \varphi_\varepsilon(t, x, y)
            \coloneq \frac{M_0 (T - t)}{2 (T - t_0)} + \frac{1}{\varepsilon} \Psi(t, x, y)
            + M_\varepsilon,
            \quad t \in [t_\varepsilon, T], \ x, y \in C([0, T], H).
        \end{equation*}
        Note that $\varphi_\varepsilon \in \mathcal{C}^{1, 1}_V([t_\varepsilon, T] \times C([0, T], H) \times C([0, T], H))$ with
        \begin{equation*}
            \partial_t \varphi_\varepsilon(t, x, y)
            = - \frac{M_0}{2 (T - t_0)},
            \quad \partial_x \varphi_\varepsilon(t, x, y)
            = - \partial_y \varphi_\varepsilon(t, x, y)
            = \frac{\theta(t, x, y)}{\varepsilon} (x(t) - y(t))
        \end{equation*}
        for all $t \in [t_\varepsilon, T)$ and all $x$, $y \in C([0, T], H)$ and $\varphi_\varepsilon$ satisfies condition \eqref{lemma_condition}, where we substitute $\varphi \coloneq \varphi_\varepsilon$, $(t_0, x_0, y_0) \coloneq k_\varepsilon$ and $T_0 \coloneq T$.
        Then, there exists a pair $(x, y) \in \mathcal{X}^L(t_0, x_0) \times \mathcal{X}^L(t_0, y_0)$ such that (compare with (4.4) from \cite{Bayraktar_Keller_2018}):
        \begin{align}
            0
            & \leq - \frac{M_0}{2 (T - t_0)} + \limsup_{\delta \downarrow 0} \frac{1}{\delta}
            \int_{t_0}^{t_0 + \delta}  \frac{- \theta(t, x, y)}{\varepsilon}
            \Bigl\langle A(t, x(t)) - A(t, y(t)), x(t) - y(t) \Bigr\rangle \, dt \nonumber \\
            & \quad + F \Bigl( t_\varepsilon, x_\varepsilon,
            \frac{\theta(t_\varepsilon, x_\varepsilon, y_\varepsilon)}{\varepsilon} \bigl( x_\varepsilon(t_\varepsilon) - y_\varepsilon(t_\varepsilon) \bigr) \Bigr)
            - F \Bigl( t_\varepsilon, y_\varepsilon,
            \frac{\theta(t_\varepsilon, x_\varepsilon, y_\varepsilon)}{\varepsilon} \bigl( x_\varepsilon(t_\varepsilon) - y_\varepsilon(t_\varepsilon) \bigr) \Bigr) \nonumber \\
            & \leq - \frac{M_0}{2 (T - t_0)} + m_{L, t_0, x_0} \biggl(
            \frac{\theta(t_\varepsilon, x_\varepsilon, y_\varepsilon)}{\varepsilon}
            |x_\varepsilon(t_\varepsilon) - y_\varepsilon(t_\varepsilon)|
            \|x_\varepsilon(\cdot \wedge t_\varepsilon) - y_\varepsilon(\cdot \wedge t_\varepsilon)\|_\infty \nonumber \\
            & \quad + \|x_\varepsilon(\cdot \wedge t_\varepsilon) - y_\varepsilon(\cdot \wedge t_\varepsilon)\|_\infty \biggr) \nonumber \\
            & \leq - \frac{M_0}{2 (T - t_0)} + m_{L, t_0, x_0} \biggl(
            \frac{4 \Psi(k_\varepsilon)}{\varepsilon \varkappa}
            + \sqrt{\frac{\Psi(k_\varepsilon)}{\varkappa}} \biggr). \label{proof_Theorem_1}
        \end{align}
        Here, the second inequality is obtained from condition $\mathbf{H}(F)$(iii)\footnote{Note that the condition $\mathbf{H}(F)$(iii) can be weakened.
        Namely, instead of \eqref{condition_F}, we can require
        \begin{equation*}
            \begin{aligned}
                & F \bigl( t, x_1, \varepsilon^{- 1} (x_1(t) - x_2(t)) \bigr)
                - F \bigl( t, x_2, \varepsilon^{- 1} (x_1(t) - x_2(t)) \bigr) \\
                & \quad \leq m_{L, t_0, x_0} \Bigl( \varepsilon^{- 1}
                \|x_1(\cdot \wedge t) - x_2(\cdot \wedge t)\|_\infty^2
                + \|x_1(\cdot \wedge t) - x_2(\cdot \wedge t)\|_\infty \Bigr).
            \end{aligned}
        \end{equation*}
        However, we do not have a specific example of a function $F$ satisfying this weakened condition and not condition $\mathbf{H}(F)$(iii).}.
        Letting $\varepsilon \to 0^+$, we come to a contradiction.
        The proof is complete.
    \end{proof}

    Thanks to the results in \cite{Bayraktar_Keller_2018} and Lemma \ref{L:DoubledComparison}, we  immediately have the following uniqueness result.

    \begin{lemma} \label{L:MinimaxSolution:GlobalUniqueness}
        There exists at most one minimax solution of \eqref{TVP}.
    \end{lemma}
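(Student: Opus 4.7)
The plan is to reduce the question to the doubled comparison Lemma \ref{L:DoubledComparison}. Let $u_1$ and $u_2$ be two minimax solutions of \eqref{TVP}. By taking the larger of the corresponding levels, we may assume that both $u_1$ and $u_2$ are minimax $L$-solutions for a common number $L \geq 0$. Define
\begin{equation*}
    w(t, x, y)
    \coloneq u_1(t, x) - u_2(t, y),
    \quad (t, x, y) \in [0, T] \times C([0, T], H) \times C([0, T], H).
\end{equation*}
The required upper semi-continuity of the restrictions of $w$ to the sets $[t_0, T] \times \mathcal{X}^L(t_0, x_0) \times \mathcal{X}^L(t_0, y_0)$ follows from the continuity of the corresponding restrictions of $u_1$ and $u_2$, and the boundary condition becomes $w(T, x, x) = h(x) - h(x) = 0$. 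Once the viscosity-subsolution-type property required in Lemma \ref{L:DoubledComparison} is verified for $w$, that lemma yields $w(t, x, x) \leq 0$, i.e.\ $u_1 \leq u_2$ everywhere, and the symmetric argument with the roles of $u_1$ and $u_2$ interchanged gives the reverse inequality.

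To verify the key property, fix $(t_0, x_0, y_0) \in [0, T) \times C([0, T], H) \times C([0, T], H)$ and a test function $\varphi \in \mathcal{C}_V^{1, 1}([t_0, T] \times C([0, T], H) \times C([0, T], H))$ realizing \eqref{lemma_condition} on a strip $[t_0, T_0] \times \mathcal{X}^L(t_0, x_0) \times \mathcal{X}^L(t_0, y_0)$. Set $z \coloneq \partial_x \varphi(t_0, x_0, y_0)$ and $\tilde{z} \coloneq -\partial_y \varphi(t_0, x_0, y_0)$, both in $H$. I would then apply the minimax $L$-subsolution property of $u_1$ at $(t_0, x_0, z)$ to obtain $x \in \mathcal{X}^L(t_0, x_0)$ satisfying \eqref{E:Minimax:L:Subsolution}, and the minimax $L$-supersolution property of $u_2$ at $(t_0, y_0, \tilde{z})$ to obtain $y \in \mathcal{X}^L(t_0, y_0)$ satisfying \eqref{E:Minimax:L:Supersolution}. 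Subtracting these two integral bounds and combining with the lower bound $\varphi(t, x, y) - \varphi(t_0, x_0, y_0) \geq w(t, x, y) - w(t_0, x_0, y_0)$ obtained from \eqref{lemma_condition}, I would next apply the functional chain rule for $\varphi$ along the pair $(x, y)$ and substitute the identities $x^\prime(s) = f^x(s) - A(s, x(s))$ and $y^\prime(s) = f^y(s) - A(s, y(s))$, dividing by $\delta \coloneq t - t_0$ and passing to $\limsup_{\delta \to 0^+}$.

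The main obstacle is controlling, in the limit, the two crossed terms
\begin{equation*}
    \frac{1}{\delta} \int_{t_0}^{t_0 + \delta} \bigl(f^x(s), \partial_x \varphi(s, x, y) - z\bigr) \, ds,
    \quad \frac{1}{\delta} \int_{t_0}^{t_0 + \delta} \bigl(f^y(s), \partial_y \varphi(s, x, y) + \tilde{z}\bigr) \, ds,
\end{equation*}
which must vanish as $\delta \to 0^+$. The pointwise bound $|f^x(s)| \leq L(1 + \|x(\cdot \wedge s)\|_\infty)$ from the definition of $\mathcal{X}^L(t_0, x_0)$ gives an $L^\infty$ estimate on $f^x$ (and likewise $f^y$) along the constructed selections, while the continuity of $\partial_x \varphi$ and $\partial_y \varphi$ as $H$-valued maps combined with the convergence $\mathbf{d}_\infty((s, x), (t_0, x_0)) \to 0$ and $\mathbf{d}_\infty((s, y), (t_0, y_0)) \to 0$ forces $\partial_x \varphi(s, x, y) \to z$ and $\partial_y \varphi(s, x, y) \to -\tilde{z}$ in $H$ as $s \to t_0^+$; a straightforward $L^\infty$-$L^1$ estimate then shows both crossed terms vanish. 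The remaining contributions converge to $\partial_t \varphi(t_0, x_0, y_0)$ and to $F(t_0, x_0, \partial_x \varphi(t_0, x_0, y_0)) - F(t_0, y_0, -\partial_y \varphi(t_0, x_0, y_0))$ by continuity of $\partial_t \varphi$ and of $F$, yielding exactly the inequality demanded by Lemma \ref{L:DoubledComparison}.
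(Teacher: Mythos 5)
Your proposal is correct and follows essentially the same route as the paper: the paper's one-line proof invokes Lemma \ref{L:DoubledComparison} together with the ``doubling theorem'' (Lemma 4.3 of Bayraktar--Keller 2018), and your verification of the key hypothesis of Lemma \ref{L:DoubledComparison} for $w(t,x,y) = u_1(t,x) - u_2(t,y)$ --- choosing $x$ via the subsolution property of $u_1$ at $z = \partial_x\varphi(t_0,x_0,y_0)$, $y$ via the supersolution property of $u_2$ at $-\partial_y\varphi(t_0,x_0,y_0)$, and killing the crossed $f^x$, $f^y$ terms by continuity of the path derivatives --- is precisely a self-contained proof of that doubling theorem.
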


    Now we turn to existence of minimax solutions on $C([0,T], H)$.
    We need the following definition for (local) minimax solution.
    \begin{definition} \label{D:LocalMinimaxSolution:Updated}
        Let $L \geq 0$, $t^\ast \in [0, T)$, and $x^\ast \in C([0, T], H)$.
        We call a function $u \colon [t^\ast, T] \times \mathcal{X}^L(t^\ast, x^\ast) \to \mathbb{R}$ a minimax $L$-solution of \eqref{TVP} on $[t^\ast, T] \times \mathcal{X}^L(t^\ast, x^\ast)$ if
        $u$ is continuous,
        the equality $u(T, x) = h(x)$ holds for all $x \in \mathcal{X}^L(t^\ast, x^\ast)$,
        and, for every $(t_0, x_0, z) \in [t^\ast, T) \times \mathcal{X}^L(t^\ast, x^\ast) \times H$, there exists an $x \in \mathcal{X}^L(t_0, x_0)$ such that, for every $t \in [t_0, T]$, equality \eqref{E:Minimax:L:Solution} holds.
    \end{definition}

    \begin{remark} \label{R:restrictionMinimax}
        Let $L \geq 0$, $t^\ast \in [0, T)$, $x^\ast \in C([0, T], H)$, and $(t_0, x_0) \in \mathcal{X}^L(t^\ast, x^\ast)$.
        Then the restriction of a minimax $L$-solution of \eqref{TVP} on $[t^\ast, T] \times \mathcal{X}^L(t^\ast, x^\ast)$ to $[t_0, T] \times \mathcal{X}^L(t_0, x_0)$ is also a minimax $L$-solution of \eqref{TVP} on $[t_0, T] \times \mathcal{X}^L(t_0, x_0)$
        (cf. \cite[Remark 3.4]{Bayraktar_Keller_2018}).
    \end{remark}

    Now, we fix $L \geq L_0$, $t^\ast \in [0, T)$, and $x^\ast \in C([0, T], H)$.
    Replacing $[0, T] \times \Omega^L$, where $\Omega^L \coloneq \mathcal{X}^L(0, x_\ast)$, in \cite[Section 5]{Bayraktar_Keller_2018} and in \cite[Proposition 2.12]{Bayraktar_Keller_2018} by $[t^\ast, T] \times \mathcal{X}^L(t^\ast, x^\ast)$, we obtain a local well-posedness result (cf. \cite[Theorem 5.7]{Bayraktar_Keller_2018}).

    \begin{lemma} \label{L:localExistence}
        Let $L \geq L_0$, $t^\ast \in [0, T)$, and $x^\ast \in C([0, T], H)$ be fixed.
        Then, there exists a unique minimax $L$-solution $u^L_{t^\ast, x^\ast}$ of \eqref{TVP} on $[t^\ast, T] \times \mathcal{X}^L(t^\ast, x^\ast)$.
    \end{lemma}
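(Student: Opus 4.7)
The plan is to follow the five-step scheme used for \cite[Theorem 5.7]{Bayraktar_Keller_2018} essentially verbatim, with the starting locus $[0, T] \times \Omega^L = [0, T] \times \mathcal{X}^L(0, x_\ast)$ replaced everywhere by $[t^\ast, T] \times \mathcal{X}^L(t^\ast, x^\ast)$. The structural ingredients that the original argument actually uses are: (a) the non-emptiness and compactness of $\mathcal{X}^L(t^\ast, x^\ast)$ in $C([0, T], H)$, supplied by \cite[Proposition 2.10]{Bayraktar_Keller_2018}; (b) the stability result for minimax sub/supersolutions (the counterpart of \cite[Proposition 4.2]{Lukoyanov_2003a}, given as \cite[Proposition 2.12]{Bayraktar_Keller_2018}), which is a local statement about compact sets of trajectories and transfers without change; and (c) the doubled comparison principle, which in our setting is exactly Lemma \ref{L:DoubledComparison}. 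The key point is that neither $x^\ast$ being constant nor $t^\ast = 0$ is used anywhere in the original scheme.

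For uniqueness, suppose $u_1$ and $u_2$ are two minimax $L$-solutions of \eqref{TVP} on $[t^\ast, T] \times \mathcal{X}^L(t^\ast, x^\ast)$. Apply Lemma \ref{L:DoubledComparison} to $w(t, x, y) \coloneq u_1(t, x) - u_2(t, y)$, restricted to pairs $(x, y) \in \mathcal{X}^L(t^\ast, x^\ast) \times \mathcal{X}^L(t^\ast, x^\ast)$. The hypothesis of Lemma \ref{L:DoubledComparison} on $w$ is verified by combining the minimax $L$-subsolution property of $u_1$ (applied with $z = \partial_x \varphi(t_0, x_0, y_0)$) and the minimax $L$-supersolution property of $u_2$ (applied with $z = -\partial_y \varphi(t_0, x_0, y_0)$), exactly as in \cite[Section 4]{Bayraktar_Keller_2018}. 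Swapping the roles of $u_1$ and $u_2$ yields $u_1 = u_2$.

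For existence, use Perron's method: set
\begin{equation*}
    \overline{u}(t, x) \coloneq \sup\bigl\{v(t, x) : v \text{ is a minimax } L\text{-subsolution on } [t^\ast, T] \times \mathcal{X}^L(t^\ast, x^\ast)\bigr\},
\end{equation*}
which is non-empty and bounded because $h$ is continuous on the compact set $\mathcal{X}^L(t^\ast, x^\ast)$ and $F$ is bounded on the associated compact data. By the stability ingredient (b), the upper semi-continuous envelope of $\overline{u}$ is a minimax $L$-subsolution and the lower semi-continuous envelope of $\overline{u}$ is a minimax $L$-supersolution on the local domain. Applying Lemma \ref{L:DoubledComparison} to the difference of these two envelopes forces them to coincide; the terminal condition $u(T, \cdot) = h$ on $\mathcal{X}^L(t^\ast, x^\ast)$ is obtained from the sub/supersolution terminal inequalities. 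This yields continuity of $\overline{u}$ and makes it the desired minimax $L$-solution on $[t^\ast, T] \times \mathcal{X}^L(t^\ast, x^\ast)$.

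The main obstacle is bookkeeping rather than new ideas: one must inspect every step of the original proof of \cite[Theorem 5.7]{Bayraktar_Keller_2018} and confirm that each appeal to $\Omega^L$ uses only compactness of the underlying set of trajectories and the stability result, never the fact that the initial time is $0$ or that $x_\ast$ is constant. Once this bookkeeping is done, the entire five-step scheme transfers directly, and combining the resulting local uniqueness with Lemma \ref{L:DoubledComparison} produces the unique $u^L_{t^\ast, x^\ast}$ claimed in the lemma.
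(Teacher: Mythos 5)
Your proposal is correct and follows essentially the same route as the paper, whose own justification of this lemma is precisely the observation that the five-step scheme of \cite[Section 5]{Bayraktar_Keller_2018} (Perron's method with semicontinuous envelopes, the stability result \cite[Proposition 2.12]{Bayraktar_Keller_2018}, and the doubled comparison, now supplied by Lemma \ref{L:DoubledComparison}) transfers verbatim once $[0,T]\times\Omega^L$ is replaced by $[t^\ast,T]\times\mathcal{X}^L(t^\ast,x^\ast)$, since only compactness of the trajectory set is ever used. Your additional sketch of the uniqueness and Perron steps matches the cited argument, so nothing further is needed.
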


    From Lemma \ref{L:localExistence}, taking Definition \ref{D:LocalMinimaxSolution:Updated} into account, we derive the following result.
    \begin{corollary}\label{Cor:localExistence}
        Let $t^\ast \in [0, T)$, $x^\ast \in C([0 , T], H)$, and $L_2 \geq L_1 \geq L_0$.
        Then, we have $u^{L_2}_{t^\ast, x^\ast}(t, x) = u^{L_1}_{t^\ast, x^\ast}(t, x)$ for all $(t, x) \in [t^\ast, T] \times \mathcal{X}^{L_1}(t^\ast, x^\ast)$.
    \end{corollary}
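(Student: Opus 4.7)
My plan is to reduce the corollary to the uniqueness assertion in Lemma \ref{L:localExistence}. I would set $\tilde{u}$ to be the restriction of $u^{L_2}_{t^\ast, x^\ast}$ to $[t^\ast, T] \times \mathcal{X}^{L_1}(t^\ast, x^\ast)$ and show that $\tilde{u}$ satisfies Definition \ref{D:LocalMinimaxSolution:Updated} with $L$ replaced by $L_1$. Once this is established, the uniqueness statement of Lemma \ref{L:localExistence} (applied with $L = L_1$) forces $\tilde{u} = u^{L_1}_{t^\ast, x^\ast}$, which is precisely the identity claimed.

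The first two requirements of Definition \ref{D:LocalMinimaxSolution:Updated} are straightforward. The defining bound $|f^x(t)| \leq L_1(1 + \|x(\cdot \wedge t)\|_\infty)$ is stronger than the corresponding $L_2$-bound, so $\mathcal{X}^{L_1}(s, y) \subseteq \mathcal{X}^{L_2}(s, y)$ for every $(s, y)$; in particular, $\mathcal{X}^{L_1}(t^\ast, x^\ast) \subseteq \mathcal{X}^{L_2}(t^\ast, x^\ast)$, so continuity of $\tilde{u}$ is inherited from $u^{L_2}_{t^\ast, x^\ast}$, and the identity $\tilde{u}(T, x) = h(x)$ for $x \in \mathcal{X}^{L_1}(t^\ast, x^\ast)$ is immediate.

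The crux is the minimax condition. Given $(t_0, x_0, z) \in [t^\ast, T) \times \mathcal{X}^{L_1}(t^\ast, x^\ast) \times H$, the $L_2$-solution property of $u^{L_2}_{t^\ast, x^\ast}$ produces some $x \in \mathcal{X}^{L_2}(t_0, x_0)$ along which equality \eqref{E:Minimax:L:Solution} holds with $u = u^{L_2}_{t^\ast, x^\ast}$; one checks that such an $x$ actually lies in $\mathcal{X}^{L_2}(t^\ast, x^\ast)$, so the value $\tilde{u}(t, x) = u^{L_2}_{t^\ast, x^\ast}(t, x)$ is defined along the trajectory whenever it remains in the narrower class. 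To close the argument, however, one must upgrade this trajectory to an element of $\mathcal{X}^{L_1}(t_0, x_0)$, and this is the main obstacle. I would address it by inspecting the proof of Lemma \ref{L:localExistence} (patterned on \cite[Theorem 5.7]{Bayraktar_Keller_2018}) and exploiting that hypothesis $\mathbf{H}(F)$(ii) allows the witnessing trajectories to be chosen with the sharper bound $|f^x(t)| \leq L_0(1 + \|x(\cdot \wedge t)\|_\infty)$. Since $L_0 \leq L_1$, such a trajectory automatically lies in $\mathcal{X}^{L_0}(t_0, x_0) \subseteq \mathcal{X}^{L_1}(t_0, x_0)$, which completes the verification of Definition \ref{D:LocalMinimaxSolution:Updated} and, combined with uniqueness in Lemma \ref{L:localExistence}, yields the claim.
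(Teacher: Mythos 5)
Your proposal is correct and matches the paper's (essentially unwritten) derivation: the corollary is presented as a direct consequence of Lemma \ref{L:localExistence} and Definition \ref{D:LocalMinimaxSolution:Updated}, i.e., one shows that the restriction of $u^{L_2}_{t^\ast, x^\ast}$ to $[t^\ast, T] \times \mathcal{X}^{L_1}(t^\ast, x^\ast)$ is itself a minimax $L_1$-solution there and then invokes uniqueness. You correctly isolate the only substantive point -- upgrading the witnessing trajectories from $\mathcal{X}^{L_2}(t_0, x_0)$ to $\mathcal{X}^{L_1}(t_0, x_0)$ -- and your resolution, namely that the construction behind Lemma \ref{L:localExistence} produces, via $\mathbf{H}(F)$(ii), trajectories already lying in $\mathcal{X}^{L_0}(t_0, x_0) \subseteq \mathcal{X}^{L_1}(t_0, x_0)$, is precisely the fact the authors rely on implicitly (compare the ``minimax $L_0$-solution'' clause of Theorem \ref{theorem_minimax_existence_uniqueness}).
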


    Finally, we are able to establish a global well-posedness result, which is Theorem \ref{theorem_minimax_existence_uniqueness}.

    \begin{proof}[Proof of Theorem \ref{theorem_minimax_existence_uniqueness}]
        Define $u \colon [0, T] \times C([0, T], H) \to \mathbb{R}$ by
        \begin{equation*}
            u(t^\ast,x^\ast)
            \coloneq u^{L_0}_{t^\ast,x^\ast}(t^\ast,x^\ast)
        \end{equation*}
        if $t^\ast < T$ and $u(T, x^\ast) \coloneq h(x^\ast)$ for all $x^\ast \in C([0, T], H)$, where $u^{L_0}_{t^\ast, x^\ast}$ is the unique minimax $L_0$-solution of \eqref{TVP} on $[t^\ast, T] \times \mathcal{X}^{L_0}(t^\ast, x^\ast)$ from Lemma \ref{L:localExistence}.
        Note that, by Corollary \ref{Cor:localExistence},
        \begin{equation} \label{E:globalExistence}
            u(t^\ast, x^\ast)
            = u^L_{t^\ast, x^\ast}(t^\ast, x^\ast),
            \quad (t^\ast, x^\ast, L) \in [0, T) \times C([0, T], H) \times [L_0, \infty).
        \end{equation}

        Due to Lemma \ref{L:MinimaxSolution:GlobalUniqueness}, we only need to  show that $u$ is a minimax $L_0$-solution of \eqref{TVP}.
        To this end, the following observation is crucial:
        for every $t^\ast \in [0, T)$, $x^\ast \in C([0, T], H)$, and $L \geq L_0$, we have $u(t, x) = u^L_{t^\ast, x^\ast}(t, x)$ for all $(t, x) \in [t^\ast, T) \times \mathcal{X}^L(t^\ast, x^\ast)$.
        To see this, fix $t^\ast\in [0,T)$, $x^\ast\in C([0,T],H)$, $L\ge L_0$, $(t,x)\in [t^\ast,T)\times\mathcal{X}^L(t^\ast,x^\ast)$, and note that, by Lemma \ref{L:localExistence}, $u^L_{t, x}$ is the unique minimax $L$-solution of \eqref{TVP} on $[t, T] \times \mathcal{X}^L(t, x)$.
        Taking additionally into account that $u^L_{t^\ast, x^\ast}$ is, again by Lemma \ref{L:localExistence}, the unique minimax $L$-solution of \eqref{TVP} on $[t^\ast, T] \times \mathcal{X}^L(t^\ast, x^\ast)$ and that $[t, T] \times \mathcal{X}^L(t, x) \subset [t^\ast, T] \times \mathcal{X}^L(t^\ast, x^\ast)$, we deduce that $u^L_{t, x} (\tilde{t}, \tilde{x}) = u^L_{t^\ast, x^\ast}(\tilde{t}, \tilde{x})$ for every $(\tilde{t}, \tilde{x}) \in [t, T] \times \mathcal{X}^L(t,x)$ (see Remark \ref{R:restrictionMinimax}).
        In particular, $u^L_{t, x}(t, x) = u^L_{t^\ast, x^\ast}(t, x)$ and thus, together with \eqref{E:globalExistence}, we have $u(t, x) = u^L_{t^\ast, x^\ast}(t, x)$.
        Hence, the mentioned observation has been established.

        Let us verify that $u$ satisfies (a) regularity, (b) terminal, and (c) interior conditions required in Definition \ref{D:minimax_solutions}(iii).

        (a)
        Fix $t^\ast \in [0, T)$, $x^\ast \in C([0, T], H)$, and $L \geq L_0$.
        Note that $u(T, x) = h(x) = u^L_{t^\ast, x^\ast}(T, x)$ for all $x \in \mathcal{X}^L(t^\ast, x^\ast)$ and, from the observation above, $u(t, x) = u^L_{t^\ast, x^\ast}(t, x)$ for all $(t, x) \in [t^\ast, T) \times \mathcal{X}^L(t^\ast, x^\ast)$.
        Hence, recalling that $u^L_{t^\ast, x^\ast}$ is continuous since it is a minimax $L$-solution on $[t^\ast, T] \times \mathcal{X}^L(t^\ast, x^\ast)$ (see Definition \ref{D:LocalMinimaxSolution:Updated}), we obtain that the restriction of $u$ on $[t^\ast, T] \times \mathcal{X}^L(t^\ast, x^\ast)$ is continuous.

        (b)
        The terminal condition follows immediately from the definition of $u$.

        (c)
        Fix $(t_0, x_0) \in [0, T) \times C([0, T], H)$ and $z \in H$.
        Since $u^{L_0}_{t_0, x_0}$ is a minimax $L_0$-so\-lu\-tion of \eqref{TVP} on $[t_0, T] \times \mathcal{X}^{L_0}(t_0, x_0)$, there exists an $x \in \mathcal{X}^{L_0}(t_0, x_0)$ such that, for all $t\in [t_0,T]$,
        \begin{align*}
            u(t_0, x_0)
            & = u^{L_0}_{t_0, x_0}(t_0, x_0)
            = \int_{t_0}^t \bigl( (- f^x(s), z) + F(s, x, z) \bigr) \, ds + u^{L_0}_{t_0, x_0}(t, x) \\
            & = \int_{t_0}^t \bigl( (- f^x(s), z) + F(s, x, z) \bigr) \,ds + u(t, x),
        \end{align*}
        where the last equality comes from the mentioned observation above.

        Consequently, $u$ is a minimax $L_0$-solution of \eqref{TVP}.
        This concludes the proof.
  \end{proof}

\section{Stability}

    In line with the standard viscosity solution theory (see, e.g., \cite[Section 8]{Crandall_Primer}),
    we establish here counterparts of the Barles--Perthame ``method of half-relaxed limits'' to our setting, which was also done in an infinite-dimensional (but non-path-dependent) setting in \cite{Kelome_Swiech_06}, where $A$ is linear, monotone, and coercive.
    Note that coerciveness of the operator $A$ is crucial in our work, because thanks to this property
    the spaces $\mathcal{X}^L(t, x)$ are compact, and it is also crucial in \cite{Kelome_Swiech_06},
    as, e.g., even in the case $A \equiv 0$, the method of half-relaxed limits may not work in infinite dimensions (see \cite[Example 5.4.3]{Fabbri_Gozzi_Swiech_17}, which is due to \cite{Swiech_02}).
    We use a minimax solution approach to prove our stability results in a similar way as
    in \cite{Subbotin_1993} and in the path-dependent finite-dimensional setting in \cite{Lukoyanov_2001}.
    While this topic was also addressed in \cite{Bayraktar_Keller_2018}, the stability result therein was valid only for minimax solutions on the locally compact space $\Omega$ (see the discussion
    just before Theorem \ref{theorem_minimax_existence_uniqueness}) whereas our result (see Corollary \ref{C:Stability} below) is valid for viscosity solutions on the whole path space $C([0, T], H)$.

    For $n \in \mathbb{N}$, consider functions $F_n \colon [0, T] \times C([0, T], H) \times H \to \mathbb{R}$, $h_n \colon C([0, T], H) \to \mathbb{R}$ and the terminal-value problem for the path-dependent Hamilton--Jacobi equation
    \begin{subequations} \label{TVPn}
    \begin{equation} \label{HJn}
        \begin{gathered}
        \partial_t u(t, x) - \langle A(t, x(t)), \partial_x u(t, x) \rangle + F_n(t, x, \partial_x u(t, x))
        = 0, \\
        (t, x) \in [0, T) \times C([0, T], H),
        \end{gathered}
    \end{equation}
    under the right-end boundary condition
    \begin{equation} \label{boundary_conditionn}
        u(T, x)
        = h_n(x),
        \quad  x \in C([0, T], H).
    \end{equation}
    \end{subequations}

    Given $L \ge 0$, $(t^\ast, x^\ast) \in [0, T) \times C([0, T], H)$, define neighborhoods
    \begin{align*}
        O^L_{t^\ast, x^\ast; \delta}(x_0)
        & \coloneq \bigl\{ x \in \mathcal{X}^L(t^\ast, x^\ast)
        \colon \|x - x_0\|_\infty \le \delta \bigr\}, \\
        O^L_{t^\ast, x^\ast; \delta}(t_0, x_0)
        & \coloneq \bigl\{ (t, x) \in [t^\ast, T] \times \mathcal{X}^L(t^\ast, x^\ast)
        \colon \mathbf{d}_\infty((t, x), (t_0, x_0)) \le \delta \bigr\}, \\
        O^L_{t^\ast, x^\ast; \delta}(t_0, x_0, z_0)
        & \coloneq O^L_{t^\ast, x^\ast; \delta}(t_0, x_0)
        \times \bigl\{ z \in H \colon |z - z_0| \le \delta \bigr\}
    \end{align*}
    for each $\delta > 0$ and $(t_0, x_0, z_0) \in [t^\ast, T] \times \mathcal{X}^L(t^\ast, x^\ast) \times H$;
    functions $F^{L, +}_{t^\ast, x^\ast}$, $F^{L, -}_{t^\ast, x^\ast} \colon [t^\ast, T] \times \mathcal{X}^L(t^\ast, x^\ast) \times H \to \mathbb{R} \cup\{-\infty, \infty\}$, which can be considered as upper and lower half-relaxed limits of the sequence $(F_n)_n$, by
    \begin{align}
        F^{L, +}_{t^\ast, x^\ast}(t_0, x_0, z_0)
        & \coloneq \limsup_{
            \substack{n \to \infty, \\
            (t, x, z) \to (t_0, x_0, z_0) \\
            \text{in } [t^\ast, T] \times \mathcal{X}^L(t^\ast, x^\ast) \times H}}
        F_n(t, x, z) \nonumber \\*
        & \coloneq \lim_{n \to \infty} \sup_{
            \substack{(t, x, z) \in O^L_{t^\ast, x^\ast; 1/m}(t_0, x_0, z_0), \\
            m \ge n}}
        F_m(t, x, z), \label{E:FnL+} \\
        F^{L, -}_{t^\ast, x^\ast}(t_0, x_0, z_0)
        & \coloneq \liminf_{
            \substack{n \to \infty,\\
            (t, x, z) \to (t_0, x_0, z_0) \\
            \text{in } [t^\ast, T] \times \mathcal{X}^L(t^\ast, x^\ast) \times H}}
        F_n(t,x,z) \nonumber \\*
        & \coloneq \lim_{n \to \infty} \inf_{
            \substack{(t, x, z) \in O^L_{t^\ast, x^\ast; 1/m}(t_0, x_0, z_0), \\
            m \ge n}}
        F_m(t,x,z), \nonumber
    \end{align}
    where $(t_0, x_0, z_0) \in [t^\ast, T] \times \mathcal{X}^L(t^\ast, x^\ast) \times H$;
    and functions $h^{L, +}_{t^\ast, x^\ast}$, $h^{L, -}_{t^\ast, x^\ast} \colon \mathcal{X}^L(t^\ast, x^\ast) \to \mathbb{R} \cup \{-\infty, \infty\}$ by
    \begin{align}
        h^{L, +}_{t^\ast, x^\ast}(x_0)
        & \coloneq \limsup_{
            \substack{n \to \infty,\\
            x \to x_0
            \text{ in} \mathcal{X}^L(t^\ast, x^\ast)}}
        h_n(x)
        \coloneq \lim_{n \to \infty} \sup_{
            \substack{x \in O^L_{t^\ast, x^\ast; 1/m}(x_0), \\
            m \ge n}}
        h_m(x), \label{E:hnL+} \\
        h^{L, -}_{t^\ast, x^\ast}(x_0)
        & \coloneq \liminf_{
            \substack{n \to \infty, \\
            x \to x_0
            \text{ in} \mathcal{X}^L(t^\ast, x^\ast)}}
        h_n(x)
        \coloneq \lim_{n \to \infty} \inf_{
            \substack{x \in O^L_{t^\ast, x^\ast; 1/m}(x_0), \\
            m \ge n}}
        h_m(x). \nonumber
    \end{align}

    Similarly, given functions $u_n \colon [0, T] \times C([0, T], H) \to \mathbb{R}$, $n \in \mathbb{N}$, define functions $u^{L, +}_{t^\ast, x^\ast}$, $u^{L, -}_{t^\ast, x^\ast} \colon [t^\ast, T] \times \mathcal{X}^L(t^\ast, x^\ast) \to \mathbb{R} \cup \{- \infty, \infty\}$, $L \ge 0$, $t^\ast \in [0, T)$, $x^\ast \in C([0, T], H)$, by
    \begin{align}
        u^{L, +}_{t^\ast, x^\ast}(t_0, x_0)
        & \coloneq \limsup_{
            \substack{n \to \infty, \\
            (t, x) \to (t_0, x_0) \\
            \text{in } [t^\ast, T] \times \mathcal{X}^L(t^\ast, x^\ast)}}
        u_n(t,x)
        \coloneq \lim_{n \to \infty} \sup_{
            \substack{(t, x) \in O^L_{t^\ast, x^\ast; 1/m}(t_0, x_0), \\
            m \ge n}}
        u_m(t, x), \label{E:unL+} \\
        u^{L, -}_{t^\ast, x^\ast}(t_0, x_0)
        & \coloneq \liminf_{
            \substack{n \to \infty, \\
            (t, x) \to (t_0, x_0) \\
            \text{in } [t^\ast, T] \times \mathcal{X}^L(t^\ast, x^\ast)}}
        u_n(t, x)
        \coloneq \lim_{n \to \infty} \inf_{
            \substack{(t, x) \in O^L_{t^\ast, x^\ast; 1/m}(t_0, x_0), \\
            m \ge n}}
        u_m(t, x), \nonumber
    \end{align}
    where $(t_0, x_0) \in [t^\ast, T] \times \mathcal{X}^L(t^\ast, x^\ast)$;
    and functions $u^{L, +}$, $u^{L, -} \colon [0, T] \times C([0, T], H) \to \mathbb{R} \cup \{- \infty, \infty\}$ by
    \begin{equation} \label{E:uL+-:stab}
        \begin{aligned}
            u^{L, +}(t^\ast,x^\ast)
            & \coloneq \begin{cases}
                u^{L, +}_{t^\ast, x^\ast}(t^\ast, x^\ast), & \text{if } t^\ast < T, \\
                h(x^\ast), & \text{if } t^\ast = T,
                \end{cases} \\
            u^{L, -}(t^\ast, x^\ast)
            & \coloneq \begin{cases}
                u^{L, -}_{t^\ast, x^\ast}(t^\ast, x^\ast), & \text{if } t^\ast < T, \\
                h(x^\ast), & \text{if } t^\ast = T,
                \end{cases}
        \end{aligned}
    \end{equation}
    where $(t^\ast, x^\ast) \in [0, T] \times C([0, T], H)$.

    We  will need the following local notions of minimax and viscosity sub- and supersolutions (cf. Definition \ref{D:LocalMinimaxSolution:Updated}) as well as viscosity solutions of the terminal-value problem \eqref{TVP}.

    \begin{definition}\label{D:LocalSemiSolution}
        Let $L \ge 0$, $t^\ast \in [0, T)$, $x^\ast \in C([0, T], H)$, $u \colon [t^\ast, T] \times \mathcal{X}^L(t^\ast, x^\ast) \to \mathbb{R}$.

        (i)
            The function $u$ is a minimax $L$-subsolution (resp. supersolution) of \eqref{TVP}         on $[t^\ast, T] \times \mathcal{X}^L(t^\ast, x^\ast)$ if $u$ is upper (resp. lower) semi-continuous, $u(T, x) \le$ (resp. $\ge$) $h(x)$ for each $x \in \mathcal{X}^L(t^\ast, x^\ast)$, and, for every $(t_0, x_0, z) \in [t^\ast, T) \times \mathcal{X}^L(t^\ast, x^\ast) \times H$, there is an $x \in \mathcal{X}^L(t_0, x_0)$ such that, for each $t \in [t_0, T]$, inequality \eqref{E:Minimax:L:Subsolution} (resp. \eqref{E:Minimax:L:Supersolution}) holds.

        (ii)
            The function $u$ is a viscosity $L$-subsolution (resp. supersolution) of \eqref{TVP} on $[t^\ast, T] \times \mathcal{X}^L(t^\ast, x^\ast)$ if $u$ is upper (resp. lower) semi-continuous, $u(T, x) \le$ (resp. $\ge$) $h(x)$ for each $x \in \mathcal{X}^L(t^\ast, x^\ast)$, and, for every $(t_0, x_0) \in [t^\ast, T) \times \mathcal{X}^L(t^\ast, x^\ast)$ and every $(\varphi, z) \in \underline{\mathcal{A}}^L u(t_0, x_0)$ (resp. $\overline{\mathcal{A}}^L u(t_0, x_0)$), inequality \eqref{definition_viscosity_subsolution} (resp. \eqref{definition_viscosity_supersolution}) holds.

        (iii)
            The function $u$ is a viscosity $L$-solution  of \eqref{TVP} on $[t^\ast, T] \times \mathcal{X}^L(t^\ast, x^\ast)$ if $u$ is a viscosity $L$-sub- and $L$-supersolution of \eqref{TVP} on $[t^\ast, T] \times \mathcal{X}^L(t^\ast, x^\ast)$.
    \end{definition}

    \begin{remark}\label{remark_equivalence_local}
        Note that the equivalence result between viscosity sub/super- and minimax sub/supersolutions stated in Theorem \ref{theorem_equivalence} and the comparison principle in form of Lemma \ref{L:DoubledComparison} are also valid for the local notions in Definition \ref{D:LocalSemiSolution} with obvious adjustments.
    \end{remark}

    \begin{theorem} \label{T:Stability}
        Fix $L \ge 0$.
        Suppose that the following holds:

        {\rm (i)}
            For each $n \in \mathbb{N}$, the function $u_n$ is a viscosity $L$-subsolution {\rm(}resp. $L$-supersolution{\rm)} of \eqref{TVPn} and the Hamiltonian $F_n$ is continuous.
            Moreover, $(u_n)_n$ is locally uniformly bounded in the following sense:
            \begin{align}\label{E:T:HalfRelaxedLimit:UniformBound}
                \sup_{n \in \mathbb{N}, \, (t, x) \in \mathcal{X}^L (t^\ast, x^\ast)} |u_n(t, x)|
                < \infty,
                \quad (t^\ast, x^\ast) \in [0, T) \times C([0, T], H).
            \end{align}

        {\rm (ii)}
            The function $F$ satisfies hypothesis $\mathbf{H}(F)${\rm(i)} and $F \ge F^{L, +}_{t^\ast, x^\ast}$ {\rm(}resp. $F \le F^{L, -}_{t^\ast, x^\ast}${\rm)} on each set $[t^\ast, T] \times \mathcal{X}^L (t^\ast, x^\ast)$, $(t^\ast, x^\ast) \in [0, T) \times C([0, T], H)$.

        {\rm (iii)}
            The function $h$ satisfies hypothesis $\mathbf{H}(h)$ and $h \ge h^{L, +}_{t^\ast, x^\ast}$
            {\rm(}resp. $h \le h^{L,-}_{t^\ast, x^\ast}${\rm)} on each set $\mathcal{X}^L (t^\ast, x^\ast)$, $(t^\ast, x^\ast) \in [0, T) \times C([0, T], H)$.

        Then, the functions $u^{L, +}_{t^\ast, x^\ast}$ {\rm(}resp. $u^{L, -}_{t^\ast, x^\ast}${\rm)}, $(t^\ast, x^\ast) \in [0, T) \times C([0, T], H)$, are viscosity $L$-subsolutions
        {\rm(}resp. $L$-supersolutions{\rm)} of \eqref{TVP} on $[t^\ast, T] \times \mathcal{X}^L(t^\ast, x^\ast)$.
    \end{theorem}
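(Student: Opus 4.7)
The plan is to use the equivalence between viscosity and minimax sub/supersolutions on local sets (Theorem \ref{theorem_equivalence} together with Remark \ref{remark_equivalence_local}): it suffices to prove that $u^{L, +}_{t^\ast, x^\ast}$ is a minimax $L$-subsolution of \eqref{TVP} on $[t^\ast, T] \times \mathcal{X}^L(t^\ast, x^\ast)$ in the sense of Definition \ref{D:LocalSemiSolution}(i) for every $(t^\ast, x^\ast) \in [0, T) \times C([0, T], H)$; the supersolution case is symmetric. Finiteness of $u^{L, +}_{t^\ast, x^\ast}$ follows from \eqref{E:T:HalfRelaxedLimit:UniformBound}, and upper semi-continuity is the standard feature of upper half-relaxed limits. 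For the terminal inequality $u^{L, +}_{t^\ast, x^\ast}(T, x) \leq h(x)$ on $\mathcal{X}^L(t^\ast, x^\ast)$, I would apply the minimax $L$-subsolution property of each $u_n$ for \eqref{TVPn} at nearby points $(t, x') \to (T, x)$ with $z = 0$: this produces $\tilde{x}^n \in \mathcal{X}^L(t, x')$ with $u_n(t, x') \leq \int_t^T F_n(s, \tilde{x}^n, 0) \, ds + h_n(\tilde{x}^n)$, and one passes to the limit using compactness of $\mathcal{X}^L(t^\ast, x^\ast)$, continuity of $h$, and the bound $h^{L, +}_{t^\ast, x^\ast} \leq h$.

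The core step is the minimax $L$-subsolution inequality at an interior point. Fix $(t_0, x_0, z) \in [t^\ast, T) \times \mathcal{X}^L(t^\ast, x^\ast) \times H$. By the definition of $u^{L, +}_{t^\ast, x^\ast}(t_0, x_0)$ in \eqref{E:unL+}, choose $n_k \to \infty$ and $(t_0^k, x_0^k) \to (t_0, x_0)$ in $[t^\ast, T) \times \mathcal{X}^L(t^\ast, x^\ast)$ with $u_{n_k}(t_0^k, x_0^k) \to u^{L, +}_{t^\ast, x^\ast}(t_0, x_0)$. By Theorem \ref{theorem_equivalence} applied to \eqref{TVPn}, each $u_{n_k}$ is a minimax $L$-subsolution of \eqref{TVPn}, which yields $x^k \in \mathcal{X}^L(t_0^k, x_0^k) \subset \mathcal{X}^L(t^\ast, x^\ast)$ such that, for every $t \in [t_0^k, T]$,
\begin{equation*}
    u_{n_k}(t_0^k, x_0^k)
    \leq \int_{t_0^k}^{t} \bigl( (- f^{x^k}(s), z) + F_{n_k}(s, x^k, z) \bigr) \, ds
    + u_{n_k}(t, x^k).
\end{equation*}
Compactness of $\mathcal{X}^L(t^\ast, x^\ast)$ in $C([0, T], H)$ provides a subsequence with $x^k \to x$ uniformly for some $x \in \mathcal{X}^L(t^\ast, x^\ast)$; combining the bound $|f^{x^k}(s)| \leq L(1 + \|x^k\|_\infty)$ with weak $L^2(0, T; H)$-compactness and the closedness argument underlying \cite[Proposition 2.10]{Bayraktar_Keller_2018} yields $x \in \mathcal{X}^L(t_0, x_0)$ with $f^{x^k} \rightharpoonup f^x$ weakly in $L^2(0, T; H)$. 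Fix $t \in (t_0, T]$ and take $\limsup_{k \to \infty}$: the left-hand side tends to $u^{L, +}_{t^\ast, x^\ast}(t_0, x_0)$; the $f^{x^k}$-integral converges to $\int_{t_0}^t (- f^x(s), z) \, ds$ by weak convergence; hypothesis (ii) gives $\limsup_k F_{n_k}(s, x^k, z) \leq F^{L, +}_{t^\ast, x^\ast}(s, x, z) \leq F(s, x, z)$ pointwise in $s$, so reverse Fatou (justified by a uniform upper bound on $F_{n_k}(s, x^k, z)$ that comes from the finiteness of $F^{L, +}_{t^\ast, x^\ast}$ and continuity of $F$ on the relevant compact set) produces $\limsup_k \int_{t_0^k}^t F_{n_k}(s, x^k, z) \, ds \leq \int_{t_0}^t F(s, x, z) \, ds$; and $\limsup_k u_{n_k}(t, x^k) \leq u^{L, +}_{t^\ast, x^\ast}(t, x)$ follows directly from \eqref{E:unL+}. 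Assembling the four bounds delivers the minimax $L$-subsolution inequality for $u^{L, +}_{t^\ast, x^\ast}$.

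The main obstacle is the identification $x \in \mathcal{X}^L(t_0, x_0)$ with $f^{x^k} \rightharpoonup f^x$: one must reconcile the uniform limit $x$ of $(x^k)$ with the weak $L^2(0, T; H)$-limit $f^\ast$ of $(f^{x^k})$ through the nonlinear evolution $x^{k\prime} + A(\cdot, x^k) = f^{x^k}$, which requires a Minty--Browder passage-to-the-limit exploiting monotonicity, hemicontinuity, and coercivity of $A$ on the Gelfand triple --- essentially the closedness content of \cite[Proposition 2.10]{Bayraktar_Keller_2018} invoked along the two-index sequence $(n_k, x^k)$. A secondary technical point is securing the uniform upper bound on $F_{n_k}(s, x^k, z)$ along the compact trajectory family $\{x^k\}_k$, which underwrites the reverse Fatou step; this is derived from the half-relaxed limit definition \eqref{E:FnL+} together with continuity of $F$ on compacts.
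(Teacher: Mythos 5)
Your proposal follows essentially the same route as the paper's proof: reduce to showing the minimax $L$-subsolution property on $[t^\ast,T]\times\mathcal{X}^L(t^\ast,x^\ast)$ via the equivalence theorem, verify the terminal condition by taking $z=0$ in the subsolution property of the $u_{n_k}$, and pass to the limit in the interior inequality using compactness of $\mathcal{X}^L(t^\ast,x^\ast)$, weak $L^2$-convergence of the forcings $f^{x^k}$ (the closedness content of \cite[Proposition 2.10]{Bayraktar_Keller_2018} and \cite[Lemma A.2]{Bayraktar_Keller_2018}), and the half-relaxed-limit bounds on $F_{n_k}$, $h_{n_k}$, $u_{n_k}$. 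The only minor difference is that you extract one subsequence so that a single limit path $x$ serves all $t\in(t_0,T]$ simultaneously, whereas the paper fixes $t$ first and then invokes \cite[Lemma 3.6]{Bayraktar_Keller_2018} to upgrade the pointwise-in-$t$ inequality to the full minimax property; both routes are valid.
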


    \begin{corollary}\label{C:Stability}
        Fix $L \ge 0$.
        Suppose that the following holds:

        {\rm (i)}
            For each $n \in \mathbb{N}$, the function $u_n$ is a viscosity $L$-solution of \eqref{TVPn}
            and the Hamiltonian $F_n$ is continuous.
            Furthermore, \eqref{E:T:HalfRelaxedLimit:UniformBound} holds.

        {\rm (ii)}
            The functions $F$ and $h$ satisfy hypotheses $\mathbf{H}(F)$ and $\mathbf{H}(h)$.
            Moreover, let $F = F^{L, +}_{t^\ast, x^\ast} = F^{L, -}_{t^\ast, x^\ast}$ on $[t^\ast, T] \times \mathcal{X}^L (t^\ast, x^\ast)$ and $h = h^{L, +}_{t^\ast, x^\ast} = h^{L, -}_{t^\ast, x^\ast}$ on $\mathcal{X}^L(t^\ast, x^\ast)$ for each $(t^\ast, x^\ast) \in [0, T) \times C([0, T], H)$.

        Then, the functions $u^{L, +}$ and $u^{L, -}$ defined by \eqref{E:uL+-:stab} coincide and, for every $(t^\ast, x^\ast) \in [0, T) \times C([0, T], H)$, the restriction of $u \coloneq u^{L, +} = u^{L, -}$ to $[t^\ast, T] \times \mathcal{X}^L(t^\ast, x^\ast)$ is the unique viscosity $L$-solution of \eqref{TVP} on $[t^\ast, T] \times \mathcal{X}^L (t^\ast, x^\ast)$.\footnote{The function $u$ is basically a viscosity $L$-solution in the sense of Definition \ref{definition_viscosity} but with slightly less regularity.}
    \end{corollary}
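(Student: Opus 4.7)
The strategy is to apply Theorem \ref{T:Stability} in both its sub- and its supersolution forms and then to sandwich $u^{L,+}$ and $u^{L,-}$ using the comparison principle. Since each $u_n$ is a viscosity $L$-solution of \eqref{TVPn}, it is simultaneously a viscosity $L$-sub- and $L$-supersolution, and the equalities $F = F^{L,+}_{t^\ast, x^\ast} = F^{L,-}_{t^\ast, x^\ast}$ and $h = h^{L,+}_{t^\ast, x^\ast} = h^{L,-}_{t^\ast, x^\ast}$ supply the inequalities required in parts (ii)--(iii) of Theorem \ref{T:Stability} in both directions. Hence, for each $(t^\ast, x^\ast) \in [0, T) \times C([0, T], H)$, the function $u^{L,+}_{t^\ast, x^\ast}$ is a viscosity $L$-subsolution and $u^{L,-}_{t^\ast, x^\ast}$ is a viscosity $L$-supersolution of \eqref{TVP} on $[t^\ast, T] \times \mathcal{X}^L(t^\ast, x^\ast)$.

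Next I derive $u^{L,+}_{t^\ast, x^\ast} \leq u^{L,-}_{t^\ast, x^\ast}$ on this set via the local comparison principle. By Theorem \ref{theorem_equivalence} in the local form discussed in Remark \ref{remark_equivalence_local}, the two functions are also minimax sub-/supersolutions locally, and Lemma \ref{L:DoubledComparison} applied to the doubled function $w(t, x, y) \coloneq u^{L,+}_{t^\ast, x^\ast}(t, x) - u^{L,-}_{t^\ast, x^\ast}(t, y)$ (whose required doubled-subsolution property is obtained by combining the infinitesimal characterizations from Proposition \ref{P:Minimax:Infinitesimal} of the sub- and super- parts, as in the standard derivation of comparison from doubled comparison) yields the desired inequality $w(t, x, x) \leq 0$. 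The reverse inequality $u^{L,-}_{t^\ast, x^\ast} \leq u^{L,+}_{t^\ast, x^\ast}$ is immediate from $\liminf \leq \limsup$ in the definitions \eqref{E:unL+}, so the two half-relaxed limits coincide, and the common function $v_{t^\ast, x^\ast}$ is a viscosity $L$-solution of \eqref{TVP} on $[t^\ast, T] \times \mathcal{X}^L(t^\ast, x^\ast)$.

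Evaluating at $(t^\ast, x^\ast)$ and using \eqref{E:uL+-:stab} gives $u^{L,+} = u^{L,-}$ on $[0, T] \times C([0, T], H)$; call this common function $u$. To identify the restriction of $u$ to $[t^\ast, T] \times \mathcal{X}^L(t^\ast, x^\ast)$ with $v_{t^\ast, x^\ast}$, fix $(t, x)$ in this set and note that $[t, T] \times \mathcal{X}^L(t, x) \subset [t^\ast, T] \times \mathcal{X}^L(t^\ast, x^\ast)$, which follows from the definition of $\mathcal{X}^L$ by piecing together the restriction of $x$ to $(t^\ast, t)$ with any trajectory from $\mathcal{X}^L(t, x)$ and the corresponding forcing terms. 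Consequently, the neighborhoods used in defining $u^{L,\pm}_{t, x}(t, x)$ are contained in those used for $u^{L,\pm}_{t^\ast, x^\ast}(t, x)$, which produces the sandwich
\begin{equation*}
u^{L,-}_{t^\ast, x^\ast}(t, x)
\leq u^{L,-}_{t, x}(t, x)
\leq u^{L,+}_{t, x}(t, x)
\leq u^{L,+}_{t^\ast, x^\ast}(t, x).
\end{equation*}
The outer terms coincide by the preceding paragraph, hence $u(t, x) = u^{L,+}_{t, x}(t, x) = v_{t^\ast, x^\ast}(t, x)$. Uniqueness of this viscosity $L$-solution on $[t^\ast, T] \times \mathcal{X}^L(t^\ast, x^\ast)$ follows from one more application of the local comparison principle.

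The main obstacle is the careful transfer of the doubled comparison argument of Lemma \ref{L:DoubledComparison} to the local setting: one must verify the terminal agreement $v_{t^\ast, x^\ast}(T, \cdot) = h$, which is obtained by squeezing $h = h^{L,+}_{t^\ast, x^\ast} \leq u^{L,+}_{t^\ast, x^\ast}(T, \cdot) \leq h$ together with the symmetric bound for $u^{L,-}_{t^\ast, x^\ast}$, before initiating the doubling, and one must check that the penalty-function machinery of Lemma \ref{L:DoubledComparison}, which crucially relies on the compactness of the sets $\mathcal{X}^L(t^\ast, x^\ast)$ and thus of the associated product domains, goes through verbatim on the restricted domain.
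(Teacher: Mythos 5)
Your proposal is correct and follows essentially the same route as the paper: Theorem \ref{T:Stability} in both directions, the trivial inequality $u^{L,-}_{t^\ast,x^\ast}\le u^{L,+}_{t^\ast,x^\ast}$, the reverse inequality via the equivalence theorem and the doubled comparison of Lemma \ref{L:DoubledComparison} (the doubled-subsolution property of $w(t,x,y)=u^{L,+}_{t^\ast,x^\ast}(t,x)-u^{L,-}_{t^\ast,x^\ast}(t,y)$ that you sketch via Proposition \ref{P:Minimax:Infinitesimal} is precisely the ``doubling theorem'' the paper cites from Bayraktar--Keller), and then consistency of the local solutions across base points. Your sandwich $u^{L,-}_{t^\ast,x^\ast}(t,x)\le u^{L,-}_{t,x}(t,x)\le u^{L,+}_{t,x}(t,x)\le u^{L,+}_{t^\ast,x^\ast}(t,x)$ is a slightly more direct way to get the identification $u(t,x)=u^{L,+}_{t^\ast,x^\ast}(t,x)$ than the paper's appeal to local uniqueness plus the restriction property, but it is the same idea in substance.
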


    \begin{proof}[Proof of Theorem \ref{T:Stability}]
        Fix $t^\ast \in [0, T)$ and $x^\ast \in C([0, T], H)$.
        We consider only the case of subsolutions.
        First, note that then each function $u_n$, $n \in \mathbb{N}$, is a minimax $L$-subsolution of \eqref{TVPn} thanks to Theorem \ref{theorem_equivalence}.
        We will show that $u^{L, +}_{t^\ast, x^\ast}$ is a minimax $L$-subsolution of \eqref{TVP} on $[t^\ast, T] \times \mathcal{X}^L(t^\ast, x^\ast)$, which will immediately imply that it is also a viscosity $L$-subsolution of \eqref{TVP} on $[t^\ast, T] \times \mathcal{X}^L(t^\ast, x^\ast)$ by Theorem \ref{theorem_equivalence} (see Remark \ref{remark_equivalence_local}).

        We start with the boundary condition.
        Let $x_0 \in \mathcal{X}^L(t^\ast, x^\ast)$.
        By \eqref{E:unL+}, there are a sequence $((t_k, x_k))_k$ in $[t^\ast, T] \times \mathcal{X}^L(t^\ast, x^\ast)$ and a subsequence $(u_{n_k})_k$ of $(u_n)_n$ such that
        \begin{equation*}
            (T, x_0)
            = \lim_{k \to \infty} (t_k, x_k),
            \quad u^{L, +}_{t^\ast,x^\ast}(T, x_0) = \lim_{k \to \infty} u_{n_k}(t_k, x_k).
        \end{equation*}
        Since the functions $u_{n_k}$, $k \in \mathbb{N}$, are minimax $L$ -subsolutions of \eqref{TVPn} with $n$ replaced by $n_k$, there is an $\tilde{x}_k \in \mathcal{X}^L(t_k, x_k)$ such that
        \begin{equation*}
            u_{n_k}(t_k, x_k)
            \le \int_{t_k}^T F_{n_k}(s, \tilde{x}_k, 0) \, ds + u_{n_k}(T, \tilde{x}_k)
            \le \int_{t_k}^T F_{n_k}(s, \tilde{x}_k, 0) \, ds + h_{n_k}(\tilde{x}_k).
        \end{equation*}
        By the proof of \cite[Proposition 2.12]{Bayraktar_Keller_2018}, we can assume that $(\tilde{x}_k)_k$ converges in $\mathcal{X}^L(t^\ast, x^\ast)$ to $x_0$.
        Moreover, we can assume that $\|\tilde{x}_k - x_0\|_\infty \le 1 / n_k$ for each $k \in \mathbb{N}$.
        Hence, by \eqref{E:FnL+}, \eqref{E:hnL+}, and also \eqref{E:T:HalfRelaxedLimit:UniformBound}, we have
        \begin{align*}
            - \infty
            & < u^{L, +}_{t^\ast, x^\ast} (T, x_0)
            \le \limsup_{k \to \infty}
            \biggl( \int_{t_k}^T F_{n_k}(s, \tilde{x}_k, 0) \, ds
            + h_{n_k}(\tilde{x}_k) \biggr) \\
            & \le \int_{t^\ast}^T \Bigl(
            \limsup_{k \to \infty} \mathbf{1}_{[t_k, T]}(s)  F^{L, +}_{t^\ast, x^\ast}(s, x_0,0) \Bigr) \, ds
            + h^{L, +}_{t^\ast, x^\ast}(x_0)
            \le h(x_0).
        \end{align*}

        Next, we deal with the interior condition.
        Let $(t_0, x_0, z_0) \in [t^\ast, T) \times \mathcal{X}^L(t^\ast, x^\ast) \times H$ and $t \in (t_0, T]$.
        By \eqref{E:unL+}, there are a sequence $((t_k, x_k))_k$ in  $[t^\ast, t] \times \mathcal{X}^L(t^\ast, x^\ast)$ and a subsequence $(u_{n_k})_k$ of $(u_n)_n$ such that
        \begin{equation*}
            (t_0, x_0)
            = \lim_{k \to\infty} (t_k, x_k),
            \quad u^{L, +}_{t^\ast, x^\ast}(t_0, x_0)
            = \lim_{k \to \infty} u_{n_k}(t_k, x_k).
        \end{equation*}
        As each $u_{n_k}$ is a minimax $L$-subsolution of \eqref{TVPn} with $n$ replaced by $n_k$, there is an $\tilde{x}_k \in \mathcal{X}^L(t_k, x_k)$ such that
        \begin{equation*}
            u_{n_k}(t_k, x_k)
            \le \int^t_{t_k} \bigl( (- f^{\tilde{x}_k}(s), z_0)
            + F_{n_k}(s, \tilde{x}_k, z_0) \bigr) \, ds
            + u_{n_k}(t, \tilde{x}_k).
        \end{equation*}
        By \cite[Proposition 2.12]{Bayraktar_Keller_2018}, we can assume that $(\tilde{x}_k)_k$ converges in $\mathcal{X}^L(t^\ast, x^\ast)$ to some $\tilde{x}_0 \in \mathcal{X}^L(t_0, x_0)$.
        Note that then also $(f^{\tilde{x}_k})_k$ converges weakly to $f^{\tilde{x}_0}$
        in $L^2(t^\ast,T;H)$ (see \cite[Lemma A.2]{Bayraktar_Keller_2018}).
        Furthermore, we can assume that $\|\tilde{x}_k - \tilde{x}_0\|_\infty \le 1 / n_k$ for each $k \in \mathbb{N}$.
        Thus, together with \eqref{E:FnL+}, \eqref{E:unL+}, \eqref{E:T:HalfRelaxedLimit:UniformBound}, and \cite[Proposition 21.23 (j)]{ZeidlerIIA}, we obtain
        \begin{align*}
            - \infty
            < u^{L, +}_{t^\ast, x^\ast}(t_0, x_0)
            & \le \limsup_{k \to \infty}
            \biggl( \int^t_{t_k} \bigl( (- f^{\tilde{x}_k}(s), z_0)
            + F_{n_k}(s, \tilde{x}_k, z_0) \bigr) \, ds
            + u_{n_k}(t, \tilde{x}_k) \biggr) \\
            & \le \limsup_{k \to \infty}
            \int_{t^\ast}^t (- f^{\tilde{x}_k}(s), \mathbf{1}_{[t_k,t]}(s) z_0) \, ds \\
            & \quad + \int_{t^\ast}^t \Bigl( \limsup_{k \to \infty}
            \mathbf{1}_{[t_k, t]}(s) F^{L, +}_{t^\ast, x^\ast}(s, \tilde{x}_0, z_0) \Bigr) \, ds
            + u^{L, +}_{t^\ast, x^\ast}(t, \tilde{x}_0) \\
            & \le \int_{t_0}^t ( - f^{\tilde{x}_0}(s), z_0) \, ds
            + \int_{t_0}^t F(s, \tilde{x}_0, z_0) \, ds
            + u^{L, +}_{t^\ast, x^\ast}(t, \tilde{x}_0).
        \end{align*}
        Applying \cite[Lemma 3.6]{Bayraktar_Keller_2018}, we can conclude that the minimax $L$-subsolution interior property is satisfied.

        Note that $u^{L, +}_{t^\ast, x^\ast}$ is upper semi-continuous and $\mathbb{R}$-valued by construction and by \eqref{E:T:HalfRelaxedLimit:UniformBound}.
        Consequently, $u^{L, +}_{t^\ast, x^\ast}$ is a minimax $L$-subsolution of \eqref{TVP} on $[t^\ast, T] \times \mathcal{X}^L(t^\ast, x^\ast)$.
    \end{proof}

    \begin{proof}[Proof of Corollary \ref{C:Stability}]
        Let $(t^\ast, x^\ast) \in [0, T) \times C([0, T], H)$.
        By construction,
        \begin{align}\label{E:C:Stability:1}
            u^{L, -}_{t^\ast, x^\ast}
            \leq u^{L, +}_{t^\ast, x^\ast}.
        \end{align}
        By Theorem \ref{T:Stability}, $u^{L, +}_{t^\ast, x^\ast}$ is a viscosity $L$-subsolution of \eqref{TVP} on $[t^\ast, T] \times \mathcal{X}^L(t^\ast, x^\ast)$ and $u^{L,-}_{t^\ast,x^\ast}$ is a viscosity $L$-supersolution of \eqref{TVP} on $[t^\ast, T] \times \mathcal{X}^L(t^\ast, x^\ast)$.
        Thus, by the comparison principle (Lemma \ref{L:DoubledComparison}) together with the ``doubling theorem'' \cite[Lemma 4.3]{Bayraktar_Keller_2018} for minimax solution and the equivalence between minimax and viscosity solutions (Theorem \ref{theorem_equivalence}), we have $u^{L, +}_{t^\ast, x^\ast} \le u^{L, -}_{t^\ast, x^\ast}$, from which, together with \eqref{E:C:Stability:1}, the equality $u^{L, +}_{t^\ast, x^\ast} = u^{L, -}_{t^\ast, x^\ast}$ follows.
        With the same argument, we can also deduce that $u^{L, +}_{t^\ast, x^\ast}$ is the unique viscosity  and minimax $L$-solution of \eqref{TVP} on $[t^\ast, T] \times \mathcal{X}^L(t^\ast, x^\ast)$.
        This implies that $u^{L, +}$ defined by \eqref{E:uL+-:stab} satisfies  $u^{L, +} (t, x) = u^{L, +}_{t^\ast, x^\ast}(t, x)$ for all $(t, x) \in \mathcal{X}^L(t^\ast, x^\ast)$ (cf. the proof of Theorem \ref{theorem_minimax_existence_uniqueness}).
        Now, one can follow the lines of the end of the proof of Theorem \ref{theorem_minimax_existence_uniqueness} (and make very minor obvious adjustments) to establish the required regularity as well as the terminal and interior conditions for the restriction of $u^{L, +}$ to $[t^\ast, T] \times \mathcal{X}^L (t^\ast, x^\ast)$ to be a minimax $L$-solution on $[t^\ast, T] \times \mathcal{X}^L(t^\ast, x^\ast)$.
        Applying Theorem \ref{theorem_equivalence} again concludes the proof.
    \end{proof}

\section{Applications to differential games}

    Assume that compact topological spaces $P$ and $Q$ and functions $(f, \ell) \colon [0, T] \times C([0, T], H) \times P \times Q \to H \times \mathbb{R}$ are given and satisfy the following conditions.

    $\mathbf{H}(f, \ell)$:
    (i)
        The functions $f$ and $\ell$ are continuous.

    (ii)
        There exists a number $L_f \geq 0$ such that
        \begin{equation*}
            |f(t, x, p, q)|
            \leq L_f (1 + \|x(\cdot \wedge t)\|_\infty),
            \quad (t, x, p, q) \in [0, T] \times C([0, T], H) \times P \times Q.
        \end{equation*}

    (iii)
        For every $L \geq 0$ and every $(t_0, x_0) \in [0, T) \times C([0, T], H)$, there exists a number $\Lambda_{L, t_0, x_0} > 0$ such that, for every $t \in [t_0, T]$, every $x_1$, $x_2 \in \mathcal{X}^L(t_0,x_0)$, every $p \in P$, and every $q \in Q$,
        \begin{align*}
            & |f(t, x_1, p, q) - f(t, x_2, p, q)|
            + |\ell(t, x_1, p, q) - \ell(t, x_2, p, q)| \\
            & \quad \leq \Lambda_{L,t_0,x_0} \|x_1(\cdot \wedge t) - x_2(\cdot \wedge t)\|_\infty.
        \end{align*}

    (iv)
        The Isaacs condition holds \cite[Assumption 8.6]{Bayraktar_Keller_2018}.

    Following \cite[Section 8]{Bayraktar_Keller_2018}, consider a differential game described by the time-delay evolution equation
    \begin{subequations} \label{game}
    \begin{equation} \label{system}
        x^\prime(t) + A(t, x(t))
        = f(t, x, a(t), b(t))
        \quad \text{for a.e. } t \in [t_0, T]
    \end{equation}
    under the initial condition
    \begin{equation} \label{initial_condition}
        x(t)
        = x_0(t),
        \quad t \in [0, t_0],
    \end{equation}
    and the cost functional
    \begin{equation} \label{cost_functional}
        J(t_0, x_0, a, b)
        \coloneq \int_{t_0}^{T} \ell(t, x^{t_0, x_0, a, b}, a(t), b(t)) dt
        + h(x^{t_0, x_0, a, b}).
    \end{equation}
    \end{subequations}
    Here, $A$ and $h$ satisfy $\mathbf{H}(A)$ and $\mathbf{H}(h)$ respectively, $(t_0, x_0) \in [0, T] \times C([0, T], H)$,
    \begin{equation*}
        a \in \mathcal{A}^{t_0}
        \coloneq \bigl\{ \hat{a} \colon [t_0, T] \to P \text{ measurable} \bigr\},
        \quad b \in \mathcal{B}^{t_0}
        \coloneq \bigl\{ \hat{b} \colon [t_0, T] \to Q \text{ measurable} \bigr\}.
    \end{equation*}
    Thanks to the assumptions made, by \cite[Proposition 2.13]{Bayraktar_Keller_2018},
    for every $(t_0, x_0) \in [0, T] \times C([0,T],H)$, every $a \in \mathcal{A}^{t_0}$, and every $b \in \mathcal{B}^{t_0}$, a solution $x^{t_0, x_0, a, b}$ of \eqref{system}, \eqref{initial_condition} exists and is unique, being understood as an element of $\mathcal{X}^{L_f}(t_0, x_0)$.
    The goal of the first player (or the controller) is to minimize the value of the cost functional \eqref{cost_functional} via $a \in \mathcal{A}^{t_0}$, while the goal of the second player (or the disturbance) is to maximize this value via $b \in \mathcal{B}^{t_0}$.

    Note that the considered conditions are weaker than those imposed in \cite{Bayraktar_Keller_2018} (see the hypotheses $\mathbf{H}(f)_2$, $\mathbf{H}(\ell)_2$, and $\mathbf{H}(h)_1$ therein).
    Namely,
    (i) we do not assume that $f$ and $\ell$ are globally Lipschitz continuous and we consider the (local) Lipschitz continuity condition with respect to the supremum norm,
    (ii) we do not assume that $\ell$ satisfies a (global) sublinear growth condition,
    and (iii) we do not assume that $h$ is Lipschitz continuous (regarding $h$, we only assume continuity, i.e., $\mathbf{H}(h)$).

    Similarly to \cite[Section 1]{Bayraktar_Keller_2018}, as an illustrative example of the game \eqref{game}, one can consider a differential game for a time-delay quasi-linear parabolic partial differential equation (see, e.g., \cite[p. 878]{ZeidlerIIB})
    \begin{align*}
        & \frac{\partial x}{\partial t}(t, \xi)
        - \sum_{i = 1}^{N} D_i \bigl( |D_i x(t, \xi)|^{p - 2} D_i x(t, \xi) \bigr) \\
        & \quad = \boldsymbol{f} \bigl( t, \xi, \{x(s, \xi)\}_{s \in [0, t]}, a(t, \xi), b(t, \xi) \bigr)
        \quad \text{on } (t, \xi) \in [t_0, T] \times G
    \end{align*}
    under the conditions
    \begin{align*}
        x(t, \xi)
        & = 0
        \quad \text{on } (t, \xi) \in [t_0, T] \times \partial G, \\
        x(t, \xi)
        & = x_0(t, \xi)
        \quad \text{on }(t, \xi) \in [0, t_0] \times G
    \end{align*}
    with a cost functional
    \begin{equation*}
        \int_{t_0}^{T} \boldsymbol{\ell} \bigl( t, \{x(s, \cdot)\}_{s \in [0, t]}, a(t, \cdot), b(t, \cdot) \bigr) dt
        + \boldsymbol{h} \bigl( \{x(t, \cdot)\}_{t \in [0, T]} \bigr).
    \end{equation*}
    Here, $G$ is a bounded region in $\mathbb{R}^N$, $N \in \mathbb{N}$, $\xi \coloneq (\xi_1, \ldots, \xi_N)$, $D_i \coloneq \partial / \partial \xi_i$, $i \in \overline{1, N}$, and $\boldsymbol{f}$, $\boldsymbol{\ell}$, and $\boldsymbol{h}$ are appropriate functions.

    Below, based on the results obtained in the paper, the question of the existence of the value of the game \eqref{game} is investigated.
    The classes of non-anticipative players' strategies are first considered, and then (in a certain sense narrower) classes of feedback players' strategies are considered.

\subsection{Non-anticipating strategies}

    In addition to the equivalence with minimax solutions (see Theorem \ref{theorem_equivalence}), another motivation for Definition \ref{definition_viscosity} of a viscosity solution of \eqref{TVP} is that the scheme for proving the existence of the value of a differential game in the classes of non-anticipating strategies of the players developed for the classical finite-dimensional case (see, for example, \cite{Evans_Souganidis_1984} and also \cite{Fleming_Soner_2006,Yong_2015}) is transferred practically without change to the path-dependent infinite-dimensional case under consideration.

    For every $(t_0, x_0) \in [0, T] \times C([0,T],H)$, define the lower and upper game values
    \begin{align*}
        v_-(t_0, x_0)
        & \coloneq \inf_{\mathfrak{a} \in \mathfrak{A}^{t_0}} \sup_{b \in \mathcal{B}^{t_0}}
        J(t_0, x_0, \mathfrak{a}[b], b), \\
        v_+(t_0, x_0)
        & \coloneq \sup_{\mathfrak{b} \in \mathfrak{B}^{t_0}} \inf_{a \in \mathcal{A}^{t_0}}
        J(t_0, x_0, a, \mathfrak{b}[a]),
    \end{align*}
    where
    \begin{align*}
        \mathfrak{A}^{t_0}
        & \coloneq \bigl\{ \mathfrak{a} \colon \mathcal{B}^{t_0} \to \mathcal{A}^{t_0}
        \text{ non-anticipating} \bigr\}, \\
        \mathfrak{B}^{t_0}
        & \coloneq \bigl\{ \mathfrak{b} \colon \mathcal{A}^{t_0} \to \mathcal{B}^{t_0}
        \text{ non-anticipating} \bigr\}.
    \end{align*}
    Recall that a mapping $\mathfrak{a} \colon \mathcal{B}^{t_0} \to \mathcal{A}^{t_0}$ is called non-anticipating if, for every $t_\ast \in [t_0, T]$ and every $b_1$, $b_2 \in \mathcal{B}^{t_0}$ satisfying $b_1(t) = b_2(t)$ for a.e. $t \in [t_0, t_\ast]$, it holds that $\mathfrak{a}[b_1](t) = \mathfrak{a}[b_2](t)$ for a.e. $t \in [t_0, t_\ast]$;
    non-anticipating mappings $\mathfrak{b} \colon \mathcal{A}^{t_0} \to \mathcal{B}^{t_0}$ are defined similarly.

    If $v_- = v_+$, we say that the differential game \eqref{game} has value $v_0 \coloneq v_- = v_+$ in the classes of non-anticipating strategies.

    Consider the functions (usually called the lower and upper Hamiltonians of the game \eqref{game})
    \begin{align*}
        F_-(t, x, z)
        & \coloneq \max_{q \in Q} \min_{p \in P} \bigl( \ell(t, x, p, q)
        + (f(t, x, p, q), z) \bigr), \\
        F_+(t, x, z)
        & \coloneq \min_{p \in P} \max_{q \in Q} \bigl( \ell(t, x, p, q)
        + (f(t, x, p, q), z) \bigr)
    \end{align*}
    for all $(t, x, z) \in [0, T] \times C([0, T], H) \times H$.
    Note that $F_-$ and $F_+$ satisfy the hypotheses $\mathbf{H}(F)$ since $(f, \ell)$ satisfies the hypotheses $\mathbf{H}(f, \ell)$(i)--(iii);
    moreover, $\mathbf{H}(F)$(ii) is satisfied with $L_0 \coloneq L_f$\footnote{In general case, $F_-$ and $F_+$ do not satisfy the condition $\mathbf{H}(F)$(iii) from \cite[Section 3]{Bayraktar_Keller_2018} (see also Remark \ref{remark_conditions_comparison}).
    For corresponding examples, see \cite[p. S1103]{Gomoyunov_Lukoyanov_Plaksin_2021}.}.

    \begin{theorem} \label{theorem_value_viscosity}
        Let $\mathbf{H}(A)$, $\mathbf{H}(f, \ell)${\rm (i)--(iii)}, and $\mathbf{H}(h)$ be satisfied.
        Then,

        {\rm (i)}
            the lower value function $v_-$ coincides with a unique viscosity {\rm(}or, equivalently, minimax{\rm)} $L_f$-solution of \eqref{TVP} with $F \coloneq F_-$;

        {\rm (ii)}
            the upper value function $v_+$ coincides with a unique viscosity {\rm(}or, equivalently, minimax{\rm)} $L_f$-solution of \eqref{TVP} with $F \coloneq F_+$;

        {\rm (iii)}
            under the additional assumption $\mathbf{H}(f, \ell)${\rm(iv)}, the differential game \eqref{game} has value in the classes of non-anticipating strategies and $v_0 = u$, where $u$ is a unique viscosity {\rm(}or, equivalently, minimax{\rm)} solution of \eqref{TVP} with
            \begin{equation} \label{E:Hamiltonian:DifferentialGame}
                F(t, x, z)
                \coloneq F_+(t, x, z)
                = F_-(t, x, z),
                \quad (t, x, z) \in [0, T] \times C([0, T], H) \times H.
            \end{equation}
    \end{theorem}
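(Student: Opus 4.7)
The proof strategy is to establish (i), from which (ii) follows by symmetry (exchanging the roles of the two players, the non-anticipative sets $\mathfrak{A}^{t_0}$ and $\mathfrak{B}^{t_0}$, and the min/max over $P$ and $Q$), while (iii) then falls out automatically: under the Isaacs condition $F_-=F_+=F$, so by (i) and (ii) both $v_-$ and $v_+$ coincide with the unique viscosity solution provided by Theorem \ref{theorem_viscosity_existence_uniqueness}.

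For (i), the plan is to follow the classical Evans--Souganidis scheme. \textbf{Regularity and terminal condition:} given $(t^\ast,x^\ast)$ and $L^\ast\ge 0$, continuity of the restriction of $v_-$ to $[t^\ast,T]\times\mathcal{X}^{L^\ast}(t^\ast,x^\ast)$ would follow from continuous dependence of $x^{t_0,x_0,a,b}$ on $(t_0,x_0)$ uniformly in $(a,b)$ (a consequence of the local Lipschitz and linear growth conditions $\mathbf{H}(f,\ell)$(ii),(iii) within the framework of $\mathcal{X}^{L_f}$) together with continuity of $\ell$, $h$ and compactness of $P$ and $Q$. The terminal equality $v_-(T,x)=h(x)$ is immediate. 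Next, I would establish the \textbf{dynamic programming principle (DPP):} for all $(t_0,x_0)$ and all $s\in[t_0,T]$,
\begin{equation*}
v_-(t_0,x_0)=\inf_{\mathfrak{a}\in\mathfrak{A}^{t_0}}\sup_{b\in\mathcal{B}^{t_0}}\Bigl\{\int_{t_0}^{s}\ell(r,x^{t_0,x_0,\mathfrak{a}[b],b},\mathfrak{a}[b](r),b(r))\,dr+v_-(s,x^{t_0,x_0,\mathfrak{a}[b],b})\Bigr\}.
\end{equation*}
The inequality ``$\le$'' is by concatenation of strategies, while the inequality ``$\ge$'' requires gluing locally $\varepsilon$-optimal strategies on $[s,T]$ parameterized by the trajectory realized at time $s$ into a single globally non-anticipative strategy, which relies on the paper's dedicated measurable selection theorem (flagged in the introduction).

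Given the DPP, I would derive the \textbf{viscosity sub- and supersolution properties} of $v_-$ with $F=F_-$ by argument-by-contradiction. For the subsolution property, fix $(\varphi,z)\in\underline{\mathcal{A}}^{L_f}v_-(t_0,x_0)$ (test touching $v_-$ from above) and suppose \eqref{definition_viscosity_subsolution} fails with strict gap $-\eta<0$. By continuity of $A$, $F_-$, $\partial_t\varphi$, $\partial_x\varphi$, this gap persists uniformly in $x\in\mathcal{X}^{L_f}(t_0,x_0)$ on a short window $[t_0,t_0+\delta]$. Selecting $q^\ast\in Q$ attaining the outer $\max_q$ in $F_-(t_0,x_0,\partial_x\varphi(t_0,x_0))$, setting $b\equiv q^\ast$, choosing a non-anticipative strategy $\mathfrak{a}$ that at each instant plays a pointwise near-minimizer in $P$ (via a measurable selection), and applying the functional chain rule for $\varphi+\tilde{\varphi}^z$ along the trajectory $x^{t_0,x_0,\mathfrak{a}[q^\ast],q^\ast}$, I would obtain a strict lower bound on $v_-(t_0+\delta,x^{t_0,x_0,\mathfrak{a}[q^\ast],q^\ast})-v_-(t_0,x_0)$ incompatible with the DPP's infimum. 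The supersolution direction is symmetric, this time selecting $p\in P$ realizing the inner $\min_p$ via the constant strategy $\mathfrak{a}[b]\equiv p$ and letting player 2 respond $\varepsilon$-optimally. Combined with uniqueness (Theorem \ref{theorem_viscosity_existence_uniqueness}), this proves (i).

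\textbf{Main obstacle.} The central technical hurdle is measurable selection: both the ``$\ge$'' direction of the DPP and the pointwise near-optimal selections in $P$ and $Q$ used in the sub/supersolution arguments must produce strategies that are simultaneously Lebesgue-measurable in time and non-anticipative in the opposing control. Because $P$ and $Q$ are only compact topological spaces (not necessarily Polish or metrizable), standard Kuratowski--Ryll-Nardzewski or Aumann-type selection theorems do not apply directly, and this is precisely what the paper's dedicated measurable selection result addresses. A secondary technical point is the careful handling of the singular test component $\tilde{\varphi}^z$ in the passage to the limit $\delta\to 0^+$: the quantity $\tfrac{1}{\delta}\int_{t_0}^{t_0+\delta}\langle A(r,x(r)),z\rangle\,dr$ must be controlled uniformly over $x\in\mathcal{X}^{L_f}(t_0,x_0)$, which follows from the growth bound $\mathbf{H}(A)$(iv) together with the $C([0,T],H)$-compactness of $\mathcal{X}^{L_f}(t_0,x_0)$ already exploited throughout the paper.
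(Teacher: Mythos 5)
Your overall architecture is the same as the paper's: the paper proves item (ii) for $v_+$ (with (i) symmetric and (iii) a consequence of uniqueness), using exactly the three ingredients you list --- a dynamic programming principle (Proposition \ref{proposition_v_+_DPP}), continuity of the restrictions to the compact sets $\mathcal{X}^L(t^\ast,x^\ast)$ (Proposition \ref{proposition_v_+_continuous}), and the measurable selection result for compact topological spaces (Proposition \ref{proposition_measurable_selection}) --- combined with the functional chain rule for $\varphi+\tilde{\varphi}^z$ and a freezing argument near $(t_0,x_0)$. Whether one argues directly from the DPP (as the paper does) or by contradiction (as you propose) is immaterial.

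There are, however, two concrete misassignments that would make your argument fail as written. First, for $v_-=\inf_{\mathfrak{a}}\sup_b J$ with $F_-=\max_q\min_p$, the roles are the opposite of what you describe: it is the \emph{subsolution} direction in which the DPP forces you to exhibit a single non-anticipating $\mathfrak{a}$ beating \emph{every} $b$, so you cannot ``set $b\equiv q^\ast$''; you must take $\mathfrak{a}[b](s)\coloneq p_0(b(s))$ with a Borel near-minimizing selection $p_0\colon Q\to P$, and only then use $\min_p h(t_0,x_0,p,q)\le\max_q\min_p h$ uniformly in $q$. Conversely, your \emph{supersolution} step --- ``selecting $p$ realizing the inner $\min_p$ via the constant strategy $\mathfrak{a}[b]\equiv p$'' --- cannot work: the minimizer $p$ in $F_-$ depends on $q$, a constant $p$ only produces the min--max $F_+$, and in that direction the DPP hands you an $\varepsilon$-optimal $\mathfrak{a}_\varepsilon$ rather than letting you prescribe $\mathfrak{a}$; the correct move is to test $\mathfrak{a}_\varepsilon$ against the constant control $b\equiv q^\ast$ attaining the outer $\max_q$. (You appear to have transplanted the roles from the $v_+$ case without flipping them.) Second, a smaller point: the ``$\ge$'' direction of the DPP does not rely on the measurable selection theorem --- the concatenated strategy in the paper's proof is non-anticipating by construction; Proposition \ref{proposition_measurable_selection} is needed only to build the pointwise response strategy $s\mapsto q_0(a(s))$ (resp. $p_0(b(s))$) in the half of the sub/supersolution argument where one player must react measurably to the other's control values in a merely compact topological space.
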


    Below, we will prove item (ii) only, since the proof for (i) is similar and (iii) follows from (i) and (ii) and the uniqueness result for viscosity solutions (see Theorem \ref{theorem_viscosity_existence_uniqueness}).
    In the proof, we will use the following two facts.

    \begin{proposition} \label{proposition_v_+_DPP}
        The function $v_+$ satisfies the dynamic programming principle:
        \begin{equation} \label{DPP}
            v_+(t_0, x_0)
            = \sup_{\mathfrak{b} \in \mathfrak{B}^{t_0}} \inf_{a \in \mathcal{A}^{t_0}}
            \biggl( \int_{t_0}^{t} \ell (s, x^{t_0, x_0, a, \mathfrak{b}[a]}, a(s), \mathfrak{b}[a](s)) \, ds
            + v_+(t, x^{t_0, x_0, a, \mathfrak{b}[a]}) \biggr)
        \end{equation}
        for all $(t_0, x_0) \in [0, T] \times C([0,T],H)$ and $t \in [t_0, T]$.
    \end{proposition}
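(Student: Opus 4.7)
The plan is to establish \eqref{DPP} by proving the two inequalities separately; denote the right-hand side by $\bar{w}(t_0, x_0)$ (with $t \in [t_0, T]$ fixed). The approach is the classical Elliott--Kalton gluing argument, adapted to our path-dependent infinite-dimensional setting by exploiting the uniqueness and concatenation property of trajectories $x^{t_0, x_0, a, b}$ in $\mathcal{X}^{L_f}(t_0, x_0)$ furnished by \cite[Proposition 2.13]{Bayraktar_Keller_2018}. Throughout, I would use the flow property $x^{t_0, x_0, a, b}|_{[t, T]} = x^{t, x^{t_0, x_0, a, b}, a|_{[t, T]}, b|_{[t, T]}}|_{[t, T]}$, which is a direct consequence of uniqueness, and the observation that $v_+(t, y)$ depends on $y$ only through $y(\cdot \wedge t)$.

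For $v_+(t_0, x_0) \geq \bar{w}(t_0, x_0)$, I would fix $\varepsilon > 0$ and an arbitrary $\mathfrak{b} \in \mathfrak{B}^{t_0}$, and, for each $y \in C([0, T], H)$, select a strategy $\mathfrak{b}_y \in \mathfrak{B}^t$ that is $\varepsilon$-optimal for $v_+(t, y)$, chosen so that $\mathfrak{b}_y$ depends only on $y(\cdot \wedge t)$. Then define a glued strategy $\tilde{\mathfrak{b}} \in \mathfrak{B}^{t_0}$ by
\[
\tilde{\mathfrak{b}}[a](s) \coloneq
\begin{cases}
\mathfrak{b}[a](s), & s \in [t_0, t], \\
\mathfrak{b}_{x^{t_0, x_0, a, \mathfrak{b}[a]}}[a|_{[t, T]}](s), & s \in (t, T].
\end{cases}
\]
The non-anticipativity of $\tilde{\mathfrak{b}}$ follows from that of $\mathfrak{b}$ and of each $\mathfrak{b}_y$, combined with the fact that $x^{t_0, x_0, a, \mathfrak{b}[a]}|_{[0, t]}$ is determined by $a|_{[t_0, t]}$ alone. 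Writing $y_a \coloneq x^{t_0, x_0, a, \mathfrak{b}[a]}$ and applying the flow property, one decomposes $J(t_0, x_0, a, \tilde{\mathfrak{b}}[a])$ as $\int_{t_0}^t \ell(s, y_a, a(s), \mathfrak{b}[a](s)) \, ds + J(t, y_a, a|_{[t, T]}, \mathfrak{b}_{y_a}[a|_{[t, T]}])$; the $\varepsilon$-optimality of $\mathfrak{b}_{y_a}$ bounds the continuation term below by $v_+(t, y_a) - \varepsilon$. Using $v_+(t_0, x_0) \geq \inf_a J(t_0, x_0, a, \tilde{\mathfrak{b}}[a])$, taking $\sup$ over $\mathfrak{b}$ and letting $\varepsilon \to 0^+$ yields the desired inequality.

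For $v_+(t_0, x_0) \leq \bar{w}(t_0, x_0)$, I would fix $\mathfrak{b} \in \mathfrak{B}^{t_0}$, $a_0 \in \mathcal{A}^{t_0}$, and $\eta > 0$. Freezing $a_0$ on $[t_0, t]$, I induce a tail strategy $\bar{\mathfrak{b}} \in \mathfrak{B}^t$ by $\bar{\mathfrak{b}}[a'] \coloneq \mathfrak{b}[a_0 \oplus a']|_{[t, T]}$, where $a_0 \oplus a' \in \mathcal{A}^{t_0}$ denotes the concatenation of $a_0|_{[t_0, t]}$ and $a' \in \mathcal{A}^t$; non-anticipativity of $\bar{\mathfrak{b}}$ is inherited from that of $\mathfrak{b}$. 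Setting $y_0 \coloneq x^{t_0, x_0, a_0, \mathfrak{b}[a_0]}$, the definition of $v_+(t, y_0)$ applied to $\bar{\mathfrak{b}}$ yields some $a' \in \mathcal{A}^t$ with $J(t, y_0, a', \bar{\mathfrak{b}}[a']) \leq v_+(t, y_0) + \eta$. With $\tilde{a} \coloneq a_0 \oplus a'$, non-anticipativity of $\mathfrak{b}$ gives $\mathfrak{b}[\tilde{a}]|_{[t_0, t]} = \mathfrak{b}[a_0]|_{[t_0, t]}$ a.e., hence $x^{t_0, x_0, \tilde{a}, \mathfrak{b}[\tilde{a}]}$ coincides with $y_0$ on $[0, t]$ and, by the flow property, equals $x^{t, y_0, a', \bar{\mathfrak{b}}[a']}$ on $[t, T]$. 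Decomposing the cost gives $\inf_a J(t_0, x_0, a, \mathfrak{b}[a]) \leq \int_{t_0}^t \ell(s, y_0, a_0(s), \mathfrak{b}[a_0](s)) \, ds + v_+(t, y_0) + \eta$; varying $a_0$, letting $\eta \to 0^+$, and taking $\sup$ over $\mathfrak{b}$ completes the proof.

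The main technical obstacle is the meticulous bookkeeping of non-anticipativity for the glued strategies $\tilde{\mathfrak{b}}$ and $\bar{\mathfrak{b}}$; everything else reduces to the flow property and uniqueness of $x^{t_0, x_0, a, b}$. Notably, no measurable selection machinery is required here, because $\tilde{\mathfrak{b}}$ and $\tilde{a}$ are defined pointwise in $a$ and $a_0$, respectively, and the $\varepsilon$-optimal selection $y \mapsto \mathfrak{b}_y$ only needs to be well-defined (not measurable) since each such $\mathfrak{b}_y$ is used for the single trajectory $y = y_a$.
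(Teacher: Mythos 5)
Your proposal is correct and takes essentially the same route as the paper's proof in the appendix: both inequalities are obtained by the standard Elliott--Kalton gluing of an $\varepsilon$-optimal tail strategy (resp.\ concatenation of a near-optimal tail control) with the given objects on $[t_0,t]$, followed by the cost decomposition via the flow/uniqueness property of $x^{t_0,x_0,a,b}$. If anything, you are slightly more explicit than the paper on the one delicate point, namely that the selected tail strategies $\mathfrak{b}_y$ must depend on $y$ only through $y(\cdot\wedge t)$ for the glued map $\tilde{\mathfrak{b}}$ to be non-anticipating.
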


    \begin{proposition} \label{proposition_v_+_continuous}
        The restrictions of the function $v_+$ to the sets $[t_0, T] \times \mathcal{X}^L(t_0, x_0)$,
        $t_0 \in [0, T)$, $x_0 \in C([0, T], H)$, $L \geq 0$, are continuous.
    \end{proposition}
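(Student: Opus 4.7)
The idea is to control $v_+$ separately in the path and the time variables, using in each step the compactness of the trajectory spaces together with the dynamic programming principle (Proposition \ref{proposition_v_+_DPP}), and then combine the two estimates via the triangle inequality. Fix $L \ge 0$, $t^\ast \in [0, T)$, and $x^\ast \in C([0, T], H)$. By $\mathbf{H}(f, \ell)${\rm(ii)}, every trajectory $x^{t_0, x_0, a, b}$ starting from $(t_0, x_0) \in [t^\ast, T] \times \mathcal{X}^L(t^\ast, x^\ast)$ lies in the compact set $K \coloneq \mathcal{X}^{L + L_f}(t^\ast, x^\ast)$, on which $\ell$ and $h$ are bounded and uniformly continuous, and from which one also extracts a common modulus of equicontinuity.

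\emph{Spatial modulus.} For $t_0 \in [t^\ast, T)$ and $x_1, x_2 \in \mathcal{X}^L(t^\ast, x^\ast)$, write $y_i \coloneq x^{t_0, x_i, a, b}$. Pairing the difference equation for $y_1 - y_2$ with $y_1 - y_2$ in $H$, using the monotonicity of $A$ to discard the $A$-terms, and invoking $\mathbf{H}(f, \ell)${\rm(iii)} on the common compact ambient set, a Gronwall argument in $\|y_1(\cdot \wedge \cdot) - y_2(\cdot \wedge \cdot)\|_\infty$ yields $\|y_1(\cdot \wedge t) - y_2(\cdot \wedge t)\|_\infty \le C\,\|x_1(\cdot \wedge t_0) - x_2(\cdot \wedge t_0)\|_\infty$ uniformly in $(a, b) \in \mathcal{A}^{t_0} \times \mathcal{B}^{t_0}$, with $C$ depending only on $L$, $L_f$, $t^\ast$, $x^\ast$. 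Combining this with the Lipschitz estimate on $\ell$ from $\mathbf{H}(f, \ell)${\rm(iii)} and the uniform continuity of $h$ on $K$ gives a common modulus $\omega$ such that $|J(t_0, x_1, a, b) - J(t_0, x_2, a, b)| \le \omega(\|x_1(\cdot \wedge t_0) - x_2(\cdot \wedge t_0)\|_\infty)$, whence $|v_+(t_0, x_1) - v_+(t_0, x_2)| \le \omega(\|x_1(\cdot \wedge t_0) - x_2(\cdot \wedge t_0)\|_\infty)$ upon taking sup over $\mathfrak{b}$ and inf over $a$.

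\emph{Temporal modulus.} For fixed $x_0 \in \mathcal{X}^L(t^\ast, x^\ast)$ and $t_0, t \in [t^\ast, T]$ with $t_0 < t$, apply the DPP at $t$. The running-cost integral is bounded by $\sup_{K} |\ell|\cdot(t - t_0)$ uniformly in $(a, \mathfrak{b})$. Since $\mathcal{X}^{L_f}(t_0, x_0) \subset K$ is compact, it is equicontinuous, so every trajectory $y = x^{t_0, x_0, a, \mathfrak{b}[a]}$ satisfies $\|y(\cdot \wedge t) - x_0(\cdot \wedge t)\|_\infty \le \eta(t - t_0)$ for a modulus $\eta$ uniform in $(a, \mathfrak{b})$ (using equicontinuity of $x_0$ on $[t_0, t]$ as well, via $x_0 \in K$). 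Applying the spatial modulus at time $t$ then yields $|v_+(t, y) - v_+(t, x_0)| \le \omega(\eta(t - t_0))$ uniformly in $(a, \mathfrak{b})$, and hence $|v_+(t_0, x_0) - v_+(t, x_0)| \le \omega(\eta(t - t_0)) + \sup_K|\ell|\cdot(t - t_0)$. The case $t < t_0$ is symmetric.

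\emph{Conclusion.} For a sequence $(t_n, x_n) \to (t_0, x_0)$ in $\mathbf{d}_\infty$ inside $[t^\ast, T] \times \mathcal{X}^L(t^\ast, x^\ast)$, the triangle inequality combined with the spatial modulus applied at time $t_n$ and the temporal modulus at $x_0$ gives $v_+(t_n, x_n) \to v_+(t_0, x_0)$; continuity of $x_0$ is used to pass from $\|\cdot(\cdot \wedge t_0)\|_\infty$ to $\|\cdot(\cdot \wedge t_n)\|_\infty$. The terminal case $t_0 = T$ reduces to $v_+(T, \cdot) = h$ and $\mathbf{H}(h)$, together with the temporal estimate. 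The main technical hurdle is ensuring that all constants and moduli are uniform in the controls and in the base point within the compact ambient set; this uniformity is precisely what the compactness of $\mathcal{X}^{L'}(t^\ast, x^\ast)$ furnishes, both as equicontinuity of the trajectory family and as a compact support for $\ell$ and $h$.
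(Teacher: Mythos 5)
Your argument is correct, but it follows a genuinely different route from the paper's. The paper does not invoke the dynamic programming principle at all: it establishes a single joint trajectory estimate $\|x^{t_0, x_0, a, b} - x^{t_1, x_1, a|_{[t_1, T]}, b|_{[t_1, T]}}\|_\infty \leq C_1 (t_1 - t_0 + \|x_0(\cdot \wedge t_0) - x_1(\cdot \wedge t_0)\|_\infty)$ (quoting the proof of \cite[Proposition 7.2]{Bayraktar_Keller_2018}), deduces the corresponding estimate on $|J(t_0, x_0, a, b) - J(t_1, x_1, a, b)|$, and then compares the two sup--inf values directly by explicitly constructing, from an $\varepsilon$-optimal strategy for one initial time, a strategy for the other initial time via prepending a constant control on $[t_0, t_1)$ --- this strategy surgery is the bulk of the paper's proof. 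You instead decompose the increment into a pure spatial part and a pure temporal part. For the spatial part you exploit that the strategy classes $\mathfrak{A}^{t_0}$, $\mathfrak{B}^{t_0}$ are identical for both initial paths, so the elementary inequality $|\sup_{\mathfrak{b}} \inf_a F - \sup_{\mathfrak{b}} \inf_a G| \leq \sup_{\mathfrak{b}, a} |F - G|$ replaces any strategy construction; for the temporal part you invoke Proposition \ref{proposition_v_+_DPP}, which internalizes exactly the strategy splicing the paper performs by hand (and whose proof is independent of continuity, so there is no circularity). Both routes rest on the same two pillars: the Gronwall/monotonicity stability of trajectories under $\mathbf{H}(A)$(ii) and $\mathbf{H}(f, \ell)$(iii), and the compactness of $\mathcal{X}^{L + L_f}(t^\ast, x^\ast)$, which furnishes equicontinuity of the trajectory family and uniform continuity of $\ell$ and $h$. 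What your route buys is a shorter, more modular continuity proof; what the paper's buys is self-containedness (no reliance on the DPP) and a modulus expressed directly in $\mathbf{d}_\infty$. One point to make explicit when writing this up: the spatial modulus must be derived for arbitrary pairs in the larger compact set $K = \mathcal{X}^{L + L_f}(t^\ast, x^\ast)$ (with the constant $\Lambda_{L + L_f, t^\ast, x^\ast}$), not merely for pairs in $\mathcal{X}^{L}(t^\ast, x^\ast)$, since in the temporal step you apply it to the pair $(y, x_0)$ with $y = x^{t_0, x_0, a, \mathfrak{b}[a]} \in K$; your phrasing ``on the common compact ambient set'' indicates you are aware of this, but the quantifier matters.
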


    The proofs of Propositions \ref{proposition_v_+_DPP} and \ref{proposition_v_+_continuous} are rather standard.
    For the reader's convenience, they are given in \ref{S:proof_proposition_v_+_DPP} and \ref{S:proof_proposition_v_+_continuous}, respectively.

    In addition, we will use the following result, which is a generalization of \cite[Lemma XI.6.1]{Fleming_Soner_2006}, where $P$ and $Q$ are compact subsets of finite-dimensional spaces.
    \begin{proposition} \label{proposition_measurable_selection}
        Let $h \colon P \times Q \to \mathbb{R}$ be a continuous function.
        Then, for every $\varepsilon > 0$, there exists a Borel measurable function $q_\varepsilon \colon P \to Q$ such that
        \begin{equation*}
            h(p, q_\varepsilon(p))
            \geq \max_{q \in Q} h(p, q) - \varepsilon,
            \quad p \in P.
        \end{equation*}
    \end{proposition}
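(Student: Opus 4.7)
The plan is to construct $q_\varepsilon$ as a piecewise-constant selector with finite range by covering $P$ with finitely many open sets on each of which a single element of $Q$ nearly attains the maximum of $h(p,\cdot)$. The Borel measurability will then come essentially for free, since a selector with finitely many values whose level sets are Borel is automatically Borel measurable. This avoids any appeal to Kuratowski--Ryll-Nardzewski-type results, which would be awkward in the general compact-topological-space setting (no metric on $P$ or $Q$ is assumed).

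First I would verify that the upper envelope $m(p) \coloneq \max_{q \in Q} h(p,q)$, which is well-defined by compactness of $Q$, is continuous on $P$: given $p_0 \in P$ and $\eta > 0$, continuity of $h$ at each $(p_0, q)$ produces open neighborhoods $U_q \ni p_0$ and $W_q \ni q$ with $|h(p,q')-h(p_0,q)| < \eta/2$ on $U_q \times W_q$; a finite subcover $W_{q_1},\ldots,W_{q_k}$ of $Q$ and $U \coloneq \bigcap_{j=1}^k U_{q_j}$ yield $|m(p)-m(p_0)| < \eta$ on $U$ by a standard two-sided estimate. Next, for every $q_0 \in Q$ I would introduce the set
\[
V_{q_0} \coloneq \{p \in P : h(p,q_0) > m(p) - \varepsilon\},
\]
which is open because both $p \mapsto h(p,q_0)$ and $m$ are continuous. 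Since the maximum in $m(p)$ is attained at some $q^\ast(p) \in Q$, the family $\{V_{q_0}\}_{q_0 \in Q}$ covers $P$.

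Compactness of $P$ then yields a finite subcover $V_{q_1},\ldots,V_{q_N}$, and I would define
\[
q_\varepsilon(p) \coloneq q_{i(p)}, \qquad i(p) \coloneq \min\{i \in \{1,\ldots,N\} : p \in V_{q_i}\}.
\]
The preimage $q_\varepsilon^{-1}(\{q_i\}) = V_{q_i} \setminus \bigcup_{j<i} V_{q_j}$ is Borel as a difference of open sets, so $q_\varepsilon$ is Borel measurable, and the inequality $h(p, q_\varepsilon(p)) \geq m(p) - \varepsilon$ follows immediately from $p \in V_{q_{i(p)}}$.

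I do not expect a substantial obstacle. The mildly delicate point, compared to the Fleming--Soner version where $P$ and $Q$ are compact subsets of Euclidean space, is establishing continuity of $m$ without a metric structure; the tube argument above handles this purely via finite covers and is the only place where joint continuity of $h$ (rather than separate continuity) is used. Everything else is topologically elementary.
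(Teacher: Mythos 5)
Your proof is correct, and it reaches the same structural endpoint as the paper's --- a finite set $q_1,\dots,q_N$ of candidates together with a minimal-index, finite-range selector whose level sets are Borel --- but it gets there by a different route. The paper never introduces the value function $m(p)=\max_{q}h(p,q)$: it covers the product $P\times Q$ by the open sets $O_\varepsilon(q)=\{(p',q'): |h(p',q')-h(p',q)|<\varepsilon\}$, extracts a finite subcover by compactness of $P\times Q$, concludes that $\max_{q\in Q}h(p,q)\le \max_{n}h(p,q_n)+\varepsilon$ for every $p$, and then defines $q_\varepsilon(p)$ as the $q_n$ of least index attaining $\max_m h(p,q_m)$; measurability then requires writing each preimage as the intersection of a closed set $\{h(\cdot,q_n)=\max_m h(\cdot,q_m)\}$ with finitely many open sets. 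You instead first prove continuity of $m$ by a tube-lemma argument, cover $P$ itself by the open sets $V_{q_0}=\{p: h(p,q_0)>m(p)-\varepsilon\}$, and take the first covering set containing $p$; your preimages are differences of open sets, so the measurability step is slightly cleaner. The trade-off is that you pay an extra (standard but nontrivial in the purely topological setting) lemma up front --- the joint-continuity tube argument for $m$ --- which the paper's product-cover trick avoids entirely. Both arguments are elementary and valid in the stated generality of compact topological spaces; no gap.
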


    The proof is given in \ref{S:proof_proposition_measurable_selection}.

    \begin{proof}[Proof of Theorem \ref{theorem_value_viscosity}]
        By Theorem \ref{theorem_viscosity_existence_uniqueness}, we only need to prove that $v_+$ is a viscosity $L_f$-solution of \eqref{TVP} with $F \coloneq F_+$.
        The proof follows a rather standard scheme (see, e.g., \cite[Theorem 3.3.6]{Yong_2015}).

        Note that the restrictions of $v_+$ to $[t^\ast, T] \times \mathcal{X}^{L^\ast}(t^\ast, x^\ast)$, $t^\ast \in [0, T)$, $x^\ast \in C([0,T], H)$, $L^\ast \geq 0$, are continuous by Proposition \ref{proposition_v_+_continuous} and the boundary condition $v_+(x) = h(x)$ for
        all $x \in C([0,T],H)$ is satisfied by construction.
        Hence, it remains to verify the inferior conditions (a) for viscosity $L_f$-subsolutions and (b) for viscosity $L_f$-supersolutions (see Definition \ref{definition_viscosity}).

        (a)
        Fix $(t_0, x_0) \in [0, T) \times C([0, T], H)$.
        Let $(\varphi, z) \in \underline{\mathcal{A}}^{L_f} v_+(t_0, x_0)$ with the corresponding number $T_0 \in (t_0, T]$ be given.
        For all $\mathfrak{b} \in \mathfrak{B}^{t_0}$ and $a \in \mathcal{A}^{t_0}$, recalling that $x^{t_0, x_0, a, \mathfrak{b}[a]} \in \mathcal{X}^{L_f}(t_0, x_0)$, we have
        \begin{align*}
            & \varphi(t_0, x_0) + \tilde{\varphi}^z(t_0, x_0) - v_+(t_0, x_0) \\*
            & \quad \leq \varphi(t_0 + \delta, x^{t_0, x_0, a, \mathfrak{b}[a]})
            + \tilde{\varphi}^z(t_0 + \delta, x^{t_0, x_0, a, \mathfrak{b}[a]})
            - v_+(t_0 + \delta, x^{t_0, x_0, a, \mathfrak{b}[a]})
        \end{align*}
        for all $\delta \in (0, T_0 - t_0]$, and, hence,
        \begin{align*}
            & v_+(t_0 + \delta, x^{t_0, x_0, a, \mathfrak{b}[a]}) - v_+(t_0, x_0) \\*
            & \quad \leq \varphi(t_0 + \delta, x^{t_0, x_0, a, \mathfrak{b}[a]})
            - \varphi(t_0, x_0)
            + \tilde{\varphi}^z(t_0 + \delta, x^{t_0, x_0, a, \mathfrak{b}[a]})
            - \tilde{\varphi}^z(t_0, x_0)
        \end{align*}
        for all $\delta \in (0, T_0 - t_0]$.
        Consequently, using \eqref{DPP}, we derive
        \begin{align*}
            0
            & = \sup_{\mathfrak{b} \in \mathfrak{B}^{t_0}} \inf_{a \in \mathcal{A}^{t_0}}
            \biggl( \int_{t_0}^{t_0 + \delta} \ell (s, x^{t_0, x_0, a, \mathfrak{b}[a]}, a(s), \mathfrak{b}[a](s)) \, d s \\*
            & \quad + v_+(t_0 + \delta, x^{t_0, x_0, a, \mathfrak{b}[a]})
            - v_+(t_0, x_0) \biggr) \\
            & \leq \sup_{\mathfrak{b} \in \mathfrak{B}^{t_0}} \inf_{a \in \mathcal{A}^{t_0}}
            \biggl( \int_{t_0}^{t_0 + \delta} \ell (s, x^{t_0, x_0, a, \mathfrak{b}[a]}, a(s), \mathfrak{b}[a](s)) \, d s \\*
            & \quad + \varphi(t_0 + \delta, x^{t_0, x_0, a, \mathfrak{b}[a]})
            - \varphi(t_0, x_0)
            + \tilde{\varphi}^z(t_0 + \delta, x^{t_0, x_0, a, \mathfrak{b}[a]})
            - \tilde{\varphi}^z(t_0, x_0) \biggr)
        \end{align*}
        for all $\delta \in (0, T_0 - t_0]$.
        Applying the functional chain rule to $\varphi$, recalling the dynamic equation and the definition of $\tilde{\varphi}^z$, and dividing by $\delta$, we obtain
        \begin{align*}
            0 & \leq \sup_{\mathfrak{b} \in \mathfrak{B}^{t_0}} \inf_{a \in \mathcal{A}^{t_0}}
            \frac{1}{\delta} \int_{t_0}^{t_0 + \delta}
            \Bigl( h(s, x^{t_0, x_0, a, \mathfrak{b}[a]}(s), a(s), \mathfrak{b}[a](s)) \\
            & \quad
            + \langle A(s, x^{t_0, x_0, a, \mathfrak{b}[a]}(s)),
            z - \partial_x \varphi(s, x^{t_0, x_0, a, \mathfrak{b}[a]}) \rangle \Bigr) \, d s,
            \quad \delta \in (0, T_0 - t_0],
        \end{align*}
        where we denote
        \begin{equation} \label{h}
            h(t, x, p, q)
            \coloneq \partial_t \varphi(t, x)
            + \ell (t, x, p, q)
            + (f(t, x, p, q), \partial_x \varphi(t, x)).
        \end{equation}
        Let $\kappa > 0$ be fixed.
        By continuity of $h$, there exists a number $\delta_\ast \in (0, T_0 - t_0]$ such that, for every
        $t \in [t_0, t_0 + \delta_\ast]$, every $x \in \mathcal{X}^{L_f}(t_0, x_0)$, every $p \in P$, and every $q \in Q$,
        \begin{equation*}
            |h(t, x, p, q) - h(t_0, x_0, p, q)|
            \leq \kappa.
        \end{equation*}
        Hence,
        \begin{align*}
            - \kappa
            & \leq \sup_{\mathfrak{b} \in \mathfrak{B}^{t_0}} \inf_{a \in \mathcal{A}^{t_0}}
            \frac{1}{\delta} \int_{t_0}^{t_0 + \delta}
            \Bigl( h(t_0, x_0, a(s), \mathfrak{b}[a](s)) \\*
            & \quad + \langle A(s, x^{t_0, x_0, a, \mathfrak{b}[a]}(s)),
            z -  \partial_x \varphi(s, x^{t_0, x_0, a, \mathfrak{b}[a]}) \rangle \Bigr) \, d s
        \end{align*}
        for all $\delta \in (0, \delta_\ast]$.
        Choose $p_0 \in P$ from the condition
        \begin{equation*}
            \max_{q \in Q} h(t_0, x_0, p_0, q)
            = \min_{p \in P} \max_{q \in Q} h(t_0, x_0, p, q)
        \end{equation*}
        and denote $a_0(t) \coloneq p_0$ for all $t \in [t_0, T]$.
        Then, for every $\delta \in (0, \delta_\ast]$, there exists $\mathfrak{b}_\delta \in \mathfrak{B}^{t_0}$ such that
        \begin{align*}
            - 2 \kappa
            & \leq \frac{1}{\delta} \int_{t_0}^{t_0 + \delta}
            h(t_0, x_0, p_0, \mathfrak{b}_\delta[a_0](s)) \, d s \\*
            & \quad + \frac{1}{\delta} \int_{t_0}^{t_0 + \delta}
            \langle A(s, x^{t_0, x_0, a_0, \mathfrak{b}_\delta[a_0]}(s)),
            z - \partial_x \varphi(s, x^{t_0, x_0, a_0, \mathfrak{b}_\delta[a_0]}) \rangle \, d s \\
            & \leq \partial_t \varphi(t_0, x_0) + F(t_0, x_0, \partial_x \varphi(t_0, x_0)) \\*
            & \quad + \sup_{x \in \mathcal{X}^{L_f}(t_0, x_0)}
            \frac{1}{\delta} \int_{t_0}^{t_0 + \delta} \langle A(s, x(s)),
            z - \partial_x \varphi(s, x) \rangle \, d s.
        \end{align*}
        As a result, recalling the definition of $\partial_t \tilde{\varphi}^z$, we get
        \begin{align*}
            - 2 \kappa
            & \leq \partial_t \varphi(t_0, x_0) + F(t_0, x_0, \partial_x \varphi(t_0, x_0)) \\
            & \quad + \limsup_{\delta \to 0^+} \sup_{x \in \mathcal{X}^{L_f}(t_0, x_0)}
            \frac{1}{\delta} \int_{t_0}^{t_0 + \delta}
            \bigl( \partial_t \tilde{\varphi}^z(s, x) - \langle A(s, x(s)),
            \partial_x \varphi(s, x) \rangle \bigr) \, d s.
        \end{align*}
        Since $\kappa > 0$ is arbitrary, we conclude that $v_+$ is a viscosity $L_f$-subsolution.

        (b)
        Fix $(t_0, x_0) \in [0, T) \times C([0, T], H)$.
        Let $(\varphi, z) \in \overline{\mathcal{A}}^{L_f} v_+(t_0, x_0)$ with the corresponding number $T_0 \in (t_0, T]$ be given.
        Arguing as above, for a fixed $\kappa > 0$, we can choose $\delta_\ast \in (0, \varepsilon]$ such that, for every $\delta \in (0, \delta_\ast]$,
        \begin{align*}
            \kappa
            & \geq \sup_{\mathfrak{b} \in \mathfrak{B}^{t_0}} \inf_{a \in \mathcal{A}^{t_0}}
            \frac{1}{\delta} \int_{t_0}^{t_0 + \delta}
            \Bigl( h(t_0, x_0, a(s), \mathfrak{b}[a](s)) \\
            & \quad + \langle A(s, x^{t_0, x_0, a, \mathfrak{b}[a]}(s)),
            z - \partial_x \varphi(s, x^{t_0, x_0, a, \mathfrak{b}[a]}) \rangle \Bigr) \, d s,
        \end{align*}
        where the notation from \eqref{h} is used.
        Since $h$ is continuous and $P$, $Q$ are compact, there exists (see Proposition \ref{proposition_measurable_selection}) a Borel measurable function $q_0 \colon P \to Q$ such that
        \begin{equation*}
            h(t_0, x_0, p, q_0(p))
            \geq \max_{q \in Q} h(t_0, x_0, p, q) - \kappa,
            \quad p \in P.
        \end{equation*}
        For every $a \in \mathcal{A}^{t_0}$, put $\mathfrak{b}_0[a](s) \coloneq q_0(a(s))$ for all $s \in [t_0, T]$.
        Note that $\mathfrak{b}_0[a] \in \mathcal{B}^{t_0}$ since $q_0$ is Borel measurable and $a$ is Lebesgue measurable.
        In addition, $\mathfrak{b}_0$ is non-anticipating, which gives $\mathfrak{b}_0 \in \mathfrak{B}^{t_0}$.
        Hence, we derive
        \begin{align*}
            2 \kappa
            & \geq \inf_{a \in \mathcal{A}^{t_0}}
            \frac{1}{\delta} \int_{t_0}^{t_0 + \delta}
            \Bigl( h(t_0, x_0, a(s), \mathfrak{b}_0[a](s)) \\*
            & \quad + \langle A(s, x^{t_0, x_0, a, \mathfrak{b}_0[a]}(s)),
            z - \partial_x \varphi(s, x^{t_0, x_0, a, \mathfrak{b}_0[a]}) \rangle \Bigr) \, d s + \kappa \\
            & \geq \inf_{a \in \mathcal{A}^{t_0}}
            \frac{1}{\delta} \int_{t_0}^{t_0 + \delta}
            \Bigl( \max_{q \in Q} h(t_0, x_0, a(s), q) \\*
            & \quad + \langle A(s, x^{t_0, x_0, a, \mathfrak{b}_0[a]}(s)),
            z - \partial_x \varphi(s, x^{t_0, x_0, a, \mathfrak{b}_0[a]}) \rangle \Bigr) \, d s \\
            & \geq \inf_{a \in \mathcal{A}^{t_0}}
            \frac{1}{\delta} \int_{t_0}^{t_0 + \delta}
            \max_{q \in Q} h(t_0, x_0, a(s), q) \, d s \\
            & \quad + \inf_{x \in \mathcal{X}^{L_f}(t_0, x_0)}
            \frac{1}{\delta} \int_{t_0}^{t_0 + \delta}
            \langle A(s, x(s)),
            z - \partial_x \varphi(s, x) \rangle \, d s
        \end{align*}
        for all $\delta \in (0, \delta_\ast]$.
        Note that, for every $a \in \mathcal{A}^{t_0}$, we have
        \begin{align*}
            & \frac{1}{\delta} \int_{t_0}^{t_0 + \delta}
            \max_{q \in Q} h(t_0, x_0, a(s), q) \, d s
            \geq \frac{1}{\delta} \int_{t_0}^{t_0 + \delta}
            \min_{p \in Q} \max_{q \in Q} h(t_0, x_0, p, q) \, d s \\
            & \quad = \frac{1}{\delta} \int_{t_0}^{t_0 + \delta}
            \bigl( \partial_t \varphi(t_0, x_0)
            + F(t_0, x_0, \partial_x \varphi(t_0, x_0)) \bigr) \, d s \\
            & \quad = \partial_t \varphi(t_0, x_0)
            + F(t_0, x_0, \partial_x \varphi(t_0, x_0))
        \end{align*}
        for all $\delta \in (0, \delta_\ast]$ and, therefore,
        \begin{equation*}
            \inf_{a \in \mathcal{A}^{t_0}}
            \frac{1}{\delta} \int_{t_0}^{t_0 + \delta}
            \max_{q \in Q} h(t_0, x_0, a(s), q) \, d s
            \geq \partial_t \varphi(t_0, x_0)
            + F(t_0, x_0, \partial_x \varphi(t_0, x_0)),
            \quad \delta \in (0, \delta_\ast].
        \end{equation*}
        As a result, we get
        \begin{align*}
            2 \kappa
            & \geq \partial_t \varphi(t_0, x_0)
            + F(t_0, x_0, \partial_x \varphi(t_0, x_0)) \\
            & \quad + \inf_{x \in \mathcal{X}^{L_f}(t_0, x_0)}
            \frac{1}{\delta} \int_{t_0}^{t_0 + \delta}
            \langle A(s, x(s)),
            z - \partial_x \varphi(s, x) \rangle \, d s
        \end{align*}
        for all $\delta \in (0, \delta_\ast]$.
        Hence, recalling the definition of $\partial_t \tilde{\varphi}^z$, we deduce
        \begin{align*}
            2 \kappa
            & \geq \partial_t \varphi(t_0, x_0)
            + F(t_0, x_0, \partial_x \varphi(t_0, x_0)) \\
            & \quad + \liminf_{\delta \to 0^+} \inf_{x \in \mathcal{X}^{L_f}(t_0, x_0)}
            \frac{1}{\delta} \int_{t_0}^{t_0 + \delta}
            \Bigl( \partial_t \tilde{\varphi}^z(s, x)
            - \langle A(s, x(s)), \partial_x \varphi(s, x) \rangle \Bigr) \, d s.
        \end{align*}
        Thus, $v_+$ is a viscosity $L_f$-supersolution.
        The proof is complete.
    \end{proof}

\subsection{Feedback strategies}

    Fix $t_0 \in [0, T]$.
    The spaces of feedback strategies of the players are defined as follows:
    \begin{align*}
        \mathbb{A}^{t_0}
        & \coloneq \bigl\{ \mathbf{a} \colon [t_0, T] \times C([0,T],H) \to P \text{ non-anticipating} \bigr\}, \\
        \mathbb{B}^{t_0}
        & \coloneq \bigl\{ \mathbf{b} \colon [t_0, T] \times C([0,T],H) \to Q \text{ non-anticipating} \bigr\}.
    \end{align*}

    For every $\delta > 0$, every partition $\pi \colon t_0 < t_1 < \cdots < t_n = T$ with mesh size $|\pi| \leq \delta$, every $x_0 \in C([0, T], H)$, every feedback strategy $\mathbf{a} \in \mathbb{A}^{t_0}$, and every control $b \in \mathcal{B}^{t_0}$, define a step-by-step feedback control $a \coloneq a^{\pi, x_0, \mathbf{a}, b} \in \mathcal{A}^{t_0}$ recursively by
    \begin{equation*}
        a(t)
        \coloneq a(t_i, x^{t_0, x_0, a, b}),
        \quad t \in [t_i, t_{t + 1}),
        \quad i = 0, \ldots, n - 1.
    \end{equation*}
    The guaranteed result of the first player given a strategy $\mathbf{a} \in \mathbb{A}^{t_0}$ and a state $x_0 \in C([0, T], H)$ is defined by
    \begin{equation*}
        J_a(t_0, x_0; \mathbf{a})
        \coloneq \inf_{\delta > 0} \sup_{|\pi| \leq \delta, \, b \in \mathcal{B}^{t_0}}
        J(t_0, x_0, a^{\pi, x_0, \mathbf{a}, b}, b).
    \end{equation*}
    The optimal guaranteed result of the first player $v_a \colon [0, T] \times C([0, T], H) \to \mathbb{R}$ is defined by
    \begin{equation*}
        v_a(t_0, x_0)
        \coloneq \inf_{\mathbf{a} \in \mathbb{A}^{t_0}}
        J_a(t_0, x_0; \mathbf{a}),
        \quad (t_0, x_0) \in [0, T] \times C([0, T], H).
    \end{equation*}

    The optimal guaranteed result of the second player $v_b \colon [0, T] \times C([0, T], H) \to \mathbb{R}$ is defined similarly with clear changes (see \cite[Section 8]{Bayraktar_Keller_2018} for details).

    If $v_a = v_b$, it is said that the differential game \eqref{game} has value $v \coloneq v_a = v_b$ in the classes of feedback strategies.

    The theorem below is a generalization of \cite[Theorem 8.7]{Bayraktar_Keller_2018}.
    \begin{theorem} \label{theorem_feedback_strategies}
        Let $\mathbf{H}(A)$, $\mathbf{H}(f, \ell)$, and $\mathbf{H}(h)$ be satisfied.
        Then, the differential game \eqref{game} has value in the classes of feedback strategies and $v = u$, where $u$ is the unique minimax {\rm(}or, equivalently, viscosity{\rm)} solution of \eqref{TVP} with $F$ given by \eqref{E:Hamiltonian:DifferentialGame}.
    \end{theorem}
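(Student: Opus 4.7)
The plan is to adapt the proof of \cite[Theorem 8.7]{Bayraktar_Keller_2018}, keeping the Krasovskii--Subbotin extremal aiming scheme intact but replacing the Lyapunov--Krasovskii functional there by the new one $(t, x, y) \mapsto \Upsilon(t, x - y)$ based on the function $\Upsilon$ from (\ref{Upsilon}). By Theorem \ref{theorem_minimax_existence_uniqueness}, the problem (\ref{TVP}) with Hamiltonian $F$ from (\ref{E:Hamiltonian:DifferentialGame}) has a unique minimax solution $u$, which is in fact an $L_f$-solution since the Isaacs Hamiltonian satisfies $\mathbf{H}(F)$(ii) with $L_0 = L_f$. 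Moreover, by Theorem \ref{theorem_value_viscosity}(iii), $u$ equals the non-anticipating game value, and the elementary inclusions $v_- \leq v_a$ and $v_b \leq v_+$ (each feedback--partition couple lifts to a non-anticipating strategy via the step-by-step mapping $b \mapsto a^{\pi, x_0, \mathbf{a}, b}$) give $v_b \leq u \leq v_a$, so the task reduces to establishing the two reverse inequalities $v_a \leq u$ and $v_b \geq u$.

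For $v_a \leq u$, I would fix $(t_0, x_0)$, $\varepsilon > 0$, and a sufficiently fine partition $\pi \colon t_0 < t_1 < \cdots < t_n = T$, and build an $\varepsilon$-optimal feedback strategy $\mathbf{a}^\ast \in \mathbb{A}^{t_0}$ as follows: at each node $t_i$ with realized history $x = x^\pi|_{[0, t_i]}$, the minimax $L_f$-supersolution property supplies, for every $z \in H$, a reference trajectory $y_i^z \in \mathcal{X}^{L_f}(t_i, x)$ with
\[
u(t_i, x) \geq \int_{t_i}^{t_{i + 1}} \bigl( -(f^{y_i^z}(s), z) + F(s, y_i^z, z) \bigr) \, ds + u(t_{i + 1}, y_i^z);
\]
I would choose $z = z_i$ via the ``extremal shift'', namely $z_i := \partial_x \Upsilon(t_i, x - \widehat{y}_{i - 1})$, where $\widehat{y}_{i - 1}$ is the reference trajectory carried over from the previous step (with $\widehat{y}_{-1} := x_0$), and set $\mathbf{a}^\ast(t, x) := p_i^\ast$ on $[t_i, t_{i + 1})$ with $p_i^\ast \in P$ attaining the minimum in
\[
\min_{p \in P} \max_{q \in Q}
\bigl( \ell(t_i, x, p, q) + (f(t_i, x, p, q), z_i) \bigr).
\]
The heart of the proof is then to apply the functional chain rule of Propositions \ref{proposition_ci-smooth_A3} and \ref{proposition_psi_c11V} to $s \mapsto \Upsilon(s, x^\pi - y_i^{z_i})$ on $[t_i, t_{i + 1}]$: the derivative splits into a non-positive monotonicity term
\[
-\theta \langle A(s, x^\pi) - A(s, y_i^{z_i}), x^\pi - y_i^{z_i} \rangle \leq 0
\]
(using $\mathbf{H}(A)$(ii) and crucially $\theta \geq 0$, guaranteed by the factor $2$ in the second summand of (\ref{Upsilon})), plus a drift term controlled by $\mathbf{H}(f, \ell)$(iii) and the Isaacs optimality of $p_i^\ast$. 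Telescoping the supersolution inequality over the partition, combined with the Lyapunov bound on $\|x^\pi - \widehat{y}_i\|_\infty$ and the uniform continuity of $u$, $F$, $\ell$ on the compact set $[t_0, T] \times \mathcal{X}^{L_f}(t_0, x_0)$, then yields $J(t_0, x_0, a^{\pi, x_0, \mathbf{a}^\ast, b}, b) \leq u(t_0, x_0) + \varepsilon$ uniformly in $b \in \mathcal{B}^{t_0}$ once $|\pi|$ is sufficiently small. The inequality $v_b \geq u$ is obtained by the symmetric construction based on the minimax $L_f$-subsolution property of $u$ and the dual saddle, with measurability of the optimal selection of $q$ ensured by Proposition \ref{proposition_measurable_selection}.

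The principal obstacle is the Lyapunov estimate: since $\Upsilon$ is only $ci$-smooth with path-derivatives of limited regularity, applying the functional chain rule to $s \mapsto \Upsilon(s, x^\pi - y_i^{z_i})$ with $x^\pi - y_i^{z_i} \in W_{pq}$ (but not necessarily in $C^1([t_i, t_{i + 1}], V)$) requires exactly the approximation machinery developed in Section \ref{S:Functional_chain_rule}, and the cancellation of the $A$-terms rests simultaneously on the monotonicity $\mathbf{H}(A)$(ii) and on the sign of $\theta$. The coercivity $\mathbf{H}(A)$(v) underpins the compactness of $\mathcal{X}^{L_f}(t_0, x_0)$ that is used in the uniform continuity argument. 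The remaining points---uniform-in-$b$ bookkeeping, a Gronwall-type estimate, and the limit $|\pi| \to 0$---follow the pattern of \cite[Theorem 8.7]{Bayraktar_Keller_2018} and \cite[Section 3]{Gomoyunov_Lukoyanov_RMS_2024}, with only minor adaptations needed to accommodate the present more general hypotheses on $F$ and the broader path space $C([0, T], H)$.
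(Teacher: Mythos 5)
Your overall architecture matches the paper's: reduce the theorem to $v_a \le u$ (and the symmetric inequality $u \le v_b$), construct an extremal-aiming feedback strategy from the minimax $L_f$-solution property of $u$, cancel the $A$-terms via monotonicity $\mathbf{H}(A)$(ii) together with the non-negativity of $\theta$, and justify the chain rule along $W_{pq}$-paths with the machinery of Section \ref{S:Functional_chain_rule}. The one genuine gap is your choice of Lyapunov--Krasovskii functional. You aim with the raw $z_i = \partial_x \Upsilon(t_i, x - \widehat{y}_{i-1})$ and propose to close the argument with ``a Gronwall-type estimate,'' but $\mathbf{H}(f,\ell)$(iii) only yields $F(t,x_1,z) - F(t,x_2,z) \le \Lambda_L (1+|z|)\, \|x_1(\cdot\wedge t)-x_2(\cdot\wedge t)\|_\infty$, and with $z = \partial_x\Upsilon = \theta\,(x(t)-y(t))$ the resulting drift is of order $\Lambda_L\|x-y\|_\infty + 4\Lambda_L\|x-y\|_\infty^2$. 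The quadratic part is $O(\Upsilon)$ and can be absorbed by an exponential weight, but the linear part (coming from the $\ell$-difference and the ``$+1$'') is only $O(\sqrt{\Upsilon})$; a Gronwall inequality for $e^{-ct}\Upsilon$ then degenerates to a Bihari-type bound of the form $\dot G \le C\sqrt{G} + \mathrm{err}$, whose solution does not tend to $0$ as $\mathrm{err}\to 0$ and $|\pi|\to 0$. This is exactly why the paper does not use $\Upsilon(t,x-y)$ itself but the regularized, time-weighted functional $\nu^\varepsilon(t,x) = \alpha^\varepsilon(t)\sqrt{\varepsilon^4+\Upsilon(t,x)}$ with $\alpha^\varepsilon(t) = (e^{-2\Lambda_L t/\varkappa}-\varepsilon\sqrt{\varkappa})/\varepsilon$, together with the inf-convolution $u_a^\varepsilon(t,x) = \min_{\tilde x}\bigl(u(t,\tilde x)+\nu^\varepsilon(t,x-\tilde x)\bigr)$: the square root makes $|\partial_x\nu^\varepsilon| \le 2\alpha^\varepsilon(t)/\sqrt{\varkappa}$ uniformly bounded and renders the whole drift of order $\beta^\varepsilon = \sqrt{\varepsilon^4+\Upsilon}$, which the dissipation $\partial_t\nu^\varepsilon = \dot\alpha^\varepsilon\beta^\varepsilon$ then strictly dominates (the paper's computation gives the margin $-\Lambda_L\beta^\varepsilon/\sqrt{\varkappa} < 0$). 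The smoothness of this composite functional is precisely what Proposition \ref{proposition_nuEpsilon_c11V} supplies; Proposition \ref{proposition_psi_c11V} alone does not cover it.

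Two minor points. First, Proposition \ref{proposition_measurable_selection} is not needed for the feedback half of the argument: the second player's feedback strategy depends only on $(t,x)$, so an arbitrary maximizer $q$ in the Isaacs expression suffices; the measurable selection is required only in the non-anticipative-strategy Theorem \ref{theorem_value_viscosity}. Second, your reduction $v_b \le u \le v_a$ via $v_- \le v_a$ and $v_b \le v_+$ and Theorem \ref{theorem_value_viscosity}(iii) is correct but the paper shortcuts it by citing $v_b \le v_a$ directly.
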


    It follows from Theorems \ref{theorem_value_viscosity} and \ref{theorem_feedback_strategies} that, under the conditions $\mathbf{H}(A)$, $\mathbf{H}(f, \ell)$, and $\mathbf{H}(h)$, for every $(t_0, x_0) \in [0, T] \times C([0, T], H)$,
    \begin{equation*}
        v_-(t_0, x_0)
        = v_a(t_0, x_0),
        \quad v_+(t_0, x_0)
        = v_b(t_0, x_0).
    \end{equation*}
    These equalities mean that the first (resp., second) player can ensure achievement of the lower  (resp., upper) game value with the help of feedback strategies that do not involve the direct use of information about the control actions of the opponent.

    \begin{proof}[Proof of Theorem \ref{theorem_feedback_strategies}]
        The proof of this result repeats that of \cite[Theorem 8.7]{Bayraktar_Keller_2018}.
        The only change that needs to be made is to use a different auxiliary function $\nu_\varepsilon$ (in this connection, see also the proof of \cite[Theorem 5.3]{Gomoyunov_Lukoyanov_2024}).

        By Theorem \ref{theorem_minimax_existence_uniqueness}, the function $u$ is well-defined.
        Since $v_b \leq v_a$ (see, e.g., \cite[Proposition 8.5]{Bayraktar_Keller_2018}), we only need to show that $v_a \leq u \leq v_b$.
        Fix $(t_0, x_0) \in [0, T] \times C([0, T], H)$.
        If $t_0 = T$, we have $v_a(t_0, x_0) = u(t_0, x_0) = v_b(t_0, x_0) = h(x_0)$ for all $x_0 \in C([0, T], H)$, and, therefore, we assume further that $t_0 < T$.
        We prove only $v_a(t_0, x_0) \leq u(t_0, x_0)$.
        To this end, we specify a family $(\mathbf{a}^\varepsilon)_{\varepsilon}$ of strategies in $\mathbb{A}^{t_0}$ to show that $J_a(t_0, x_0; \mathbf{a}^\varepsilon) \leq u(t_0, x_0) + m_a(\varepsilon)$ for all $\varepsilon > 0$ and some modulus of continuity $m_a$.

        Recall $L_f$ is from $\mathbf{H}(f, \ell)$(ii) and choose $\Lambda_L \coloneq \Lambda_{L_f, t_0, x_0}$ according to $\mathbf{H}(f, \ell)$(iii).
        Further, take $\varkappa$ from \eqref{varkappa} and denote $\varepsilon_0 \coloneq e^{- 2 \Lambda_L T / \varkappa} / (2 \sqrt{\varkappa})$.
        For every $\varepsilon \in (0, \varepsilon_0]$ and every $(t, x) \in [0, T] \times C([0, T], H)$, put
        \begin{equation*}
            \alpha^\varepsilon(t)
            \coloneq \frac{e^{- 2 \Lambda_L t / \varkappa} - \varepsilon \sqrt{\varkappa}}{\varepsilon},
            \quad \beta^\varepsilon(t, x)
            \coloneq \sqrt{\varepsilon^4 + \Upsilon(t, x)},
            \quad \nu^\varepsilon(t, x)
            \coloneq \alpha^\varepsilon(t) \beta^\varepsilon(t, x),
        \end{equation*}
        where $\Upsilon$ is from \eqref{Upsilon}.
        By Proposition \ref{proposition_nuEpsilon_c11V}, for every $\varepsilon > 0$, we have $\nu^\varepsilon \in \mathcal{C}^{1, 1}_V([0, T] \times C([0, T], H))$ and, for every $t \in [0, T)$ and every $x \in C([0, T], H)$,
        \begin{equation*}
            \partial_t \nu^\varepsilon(t, x)
            = - \frac{2 \Lambda_L e^{- \Lambda_L t / \varkappa}}{\varkappa \varepsilon}
            \beta^\varepsilon(t, x),
            \quad \partial_x \nu^\varepsilon(t, x)
            = \frac{\alpha^\varepsilon(t)}{2 \beta^\varepsilon(t, x)} \theta(t, x, 0) x(t),
        \end{equation*}
        where $\theta(t, x, 0)$ is given by \eqref{theta}.

        Let $\varepsilon > 0$.
        Define the function $u_a^\varepsilon \colon [t_0, T] \times \mathcal{X}^{L_f}(t_0, x_0) \to \mathbb{R}$ by
        \begin{equation*}
            u_a^\varepsilon(t, x)
            \coloneq \min_{\tilde{x} \in \mathcal{X}^{L_f}(t_0, x_0)}
            \bigl( u(t, \tilde{x}) + \nu^\varepsilon(t, x - \tilde{x}) \bigr),
            \quad t \in [t_0, T], \ x \in \mathcal{X}^{L_f}(t_0, x_0).
        \end{equation*}
        For every $(t, x) \in [t_0, T] \times \mathcal{X}^{L_f}(t_0, x_0)$,
        choose a path $x_a^{\varepsilon, t, x}$ for which the minimum is attained and
        an $\mathbf{a}^\varepsilon(t, x) \in P$ such that
        \begin{equation} \label{E:def:a_epsilon}
            \begin{aligned}
                & \max_{q \in Q} \bigl( \ell (t, x, \mathbf{a}^\varepsilon(t, x), q)
                + (f(t, x, \mathbf{a}^\varepsilon(t, x), q),
                \partial_x \nu^\varepsilon (t, x - x_a^{\varepsilon, t, x})) \bigr) \\
                & \quad = \min_{p \in P} \max_{q \in Q} \bigl( \ell (t, x, p, q)
                + (f(t, x, p, q),
                \partial_x \nu^\varepsilon (t, x - x_a^{\varepsilon, t, x})) \bigr).
            \end{aligned}
        \end{equation}
        We can do this in such a way that $\mathbf{a}^\varepsilon$ is non-anticipating (this is possible since $f$, $\ell$, $u$, $\nu^\varepsilon$, and $\partial_x \nu^\varepsilon$ are non-anticipative).
        Thus, we have $\mathbf{a}^\varepsilon \in \mathbb{A}^{t_0}$.

        Let us prove that there exists a modulus of continuity $m$ such that, for every number $\delta > 0$, every partition $\pi$ with $|\pi| \leq \delta$, every control $b \in \mathcal{B}^{t_0}$, and every $i = 0, \ldots, n - 1$,
        \begin{equation}\label{E:FeedbackGame:DesiredInequality}
            \int_{t_i}^{t_{i + 1}} \ell(t, x^{\pi, b}, \mathbf{a}^\varepsilon(t_i, x^{\pi, b}), b(t)) \, dt
            + u_a^\varepsilon(t_{i + 1}, x^{\pi, b}) - u_a^\varepsilon(t_i, x^{\pi, b})
            \leq m(\delta) (t_{i + 1} - t_i),
        \end{equation}
        where $x^{\pi, b} \coloneq x^{t_0, x_0, a^{\pi, x_0, \mathbf{a}^\varepsilon, b}, b}$.
        Denote $x^\varepsilon_{a, i} \coloneq x_a^{\varepsilon, t_i, x^{\pi, b}}$.
        Note that $u$ is a minimax $L_f$-solution of \eqref{TVP} (see Theorem \ref{theorem_minimax_existence_uniqueness} and recall that $F$ from \eqref{E:Hamiltonian:DifferentialGame} satisfies $\mathbf{H}(F)$(ii) with $L_0 \coloneq L_f$).
        Hence, there exists $\tilde{x} \in \mathcal{X}^{L_f}(t_i, x^\varepsilon_{a, i})$ such that
        \begin{align*}
            & u(t_{i + 1}, \tilde{x}) - u(t_i, x^\varepsilon_{a, i}) \\
            & \quad = \int_{t_i}^{t_{i + 1}} \bigl( (f^{\tilde{x}}(s),
            \partial_x \nu^\varepsilon (t_i, x^{\pi, b} - x_{a, i}^{\varepsilon}))
            - F(s, \tilde{x}, \partial_x \nu^\varepsilon (t_i, x^{\pi, b} - x_{a, i}^{\varepsilon}))
            \bigr) \, ds.
        \end{align*}
        Then, according to the definition of $u_a^\varepsilon$ and the functional chain rule, we have
        \begin{align*}
            & u_a^\varepsilon(t_{i + 1}, x^{\pi, b}) - u_a^\varepsilon(t_i, x^{\pi, b}) \\
            & \quad \leq u(t_{i + 1}, \tilde{x})
            + \nu^\varepsilon (t_{i + 1}, x^{\pi, b} - \tilde{x})
            - u(t_i, x^\varepsilon_{a, i})
            - \nu^\varepsilon (t_i, x^{\pi, b} - x^\varepsilon_{a, i}) \\
            & \quad = \int_{t_i}^{t_{i + 1}} \bigl( (f^{\tilde{x}}(s),
            \partial_x \nu^\varepsilon (t_i, x^{\pi, b} - x_{a, i}^{\varepsilon}))
            - F(s, \tilde{x}, \partial_x \nu^\varepsilon (t_i, x^{\pi, b} - x_{a, i}^{\varepsilon}))
            \bigr) \, ds \\
            & \qquad + \nu^\varepsilon (t_{i + 1}, x^{\pi, b} - \tilde{x})
            - \nu^\varepsilon (t_i, x^{\pi, b} - x^\varepsilon_{a, i}) \\
            & \quad = \int_{t_i}^{t_{i + 1}} \bigl( (f^{\tilde{x}}(s),
            \partial_x \nu^\varepsilon (t_i, x^{\pi, b} - x_{a, i}^{\varepsilon}))
            - F(s, \tilde{x}, \partial_x \nu^\varepsilon (t_i, x^{\pi, b} - x_{a, i}^{\varepsilon}))
            \bigr) \, ds \\
            & \qquad + \int_{t_i}^{t_{i + 1}}
            \bigl( \partial_t \nu^\varepsilon (s, x^{\pi, b} - \tilde{x})
            + \langle - A(s, x^{\pi, b}(s)) + A(s, \tilde{x}(s)),
            \partial_x \nu^\varepsilon (s, x^{\pi, b} - \tilde{x}) \rangle \\
            & \qquad + (f(s, x^{\pi, b}, \mathbf{a}^\varepsilon(t_i, x^{\pi, b}), b(s))
            - f^{\tilde{x}}(s),
            \partial_x \nu^\varepsilon (s, x^{\pi, b} - \tilde{x})) \bigr) \, ds \\
            & \quad = \int_{t_i}^{t_{i + 1}} \bigl(
            \partial_t \nu^\varepsilon (s, x^{\pi, b} - \tilde{x})
            + (f(s, x^{\pi, b}, \mathbf{a}^\varepsilon(t_i, x^{\pi, b}), b(s)),
            \partial_x \nu^\varepsilon (s, x^{\pi, b} - \tilde{x})) \\
            & \qquad - F(s, \tilde{x}, \partial_x \nu^\varepsilon (t_i, x^{\pi, b} - x_{a, i}^{\varepsilon}))
            + (f^{\tilde{x}}(s),
            \partial_x \nu^\varepsilon (t_i, x^{\pi, b} - x_{a, i}^{\varepsilon})
            - \partial_x \nu^\varepsilon (s, x^{\pi, b} - \tilde{x})) \\
            & \qquad + \langle - A(s, x^{\pi, b}(s)) + A(s, \tilde{x}(s)),
            \partial_x \nu^\varepsilon (s, x^{\pi, b} - \tilde{x}) \rangle
            \bigr) \, ds.
        \end{align*}
        Note that, by the monotonicity of $A$ and the formula for $\partial_x \nu^\varepsilon (s, x^{\pi, b} - \tilde{x})$ (recall the inclusion $\partial_x \nu^\varepsilon (s, x^{\pi, b} - \tilde{x}) \in V$ for a.e. $s$ and that $\theta$ is non-negative),
        \begin{align*}
            & \langle - A(s, x^{\pi, b}(s)) + A(s, \tilde{x}(s)),
            \partial_x \nu^\varepsilon (s, x^{\pi, b} - \tilde{x}) \rangle \\
            & \quad = \biggl\langle - A(s, x^{\pi, b}(s)) + A(s, \tilde{x}(s)),
            \frac{\alpha^\varepsilon(s)}{2 \beta^\varepsilon(s, x^{\pi, b} - \tilde{x})}
            \theta(t, x^{\pi, b} - \tilde{x}, 0) (x^{\pi, b}(s) - \tilde{x}(s)) \biggr\rangle \\
            & \quad = \frac{\alpha^\varepsilon(s)}{2 \beta^\varepsilon(s, x^{\pi, b} - \tilde{x})}
            \theta(t, x^{\pi, b} - \tilde{x}, 0)
            \biggl\langle - A(s, x^{\pi, b}(s)) + A(s, \tilde{x}(s)),
            x^{\pi, b}(s) - \tilde{x}(s) \biggr\rangle \\
            & \quad \leq 0.
        \end{align*}
        Due to compactness of $\mathcal{X}^{L_f}(t_0, x_0)$ and continuity of $\partial_t \nu^\varepsilon$, $\partial_x \nu^\varepsilon$, $f$, and $F$, there exists a modulus $m$ such that (recall the definition of $\mathbf{a}^\varepsilon(t_i, x^{\pi, b})$
        in \eqref{E:def:a_epsilon} and the definition of $F$ in \eqref{E:Hamiltonian:DifferentialGame})
        \begin{align*}
            & \int_{t_i}^{t_{i + 1}} \ell(t, x^{\pi, b}, \mathbf{a}^\varepsilon(t_i, x^{\pi, b}), b(t)) \, dt
            + u_a^\varepsilon(t_{i + 1}, x^{\pi, b}) - u_a^\varepsilon(t_i, x^{\pi, b}) \\
            & \quad \leq \bigl( \partial_t \nu^\varepsilon (t_i, x^{\pi, b} - x_{a, i}^{\varepsilon})
            + F(t_i, x^{\pi, b}, \partial_x \nu^\varepsilon (t_i, x^{\pi, b} - x_{a, i}^{\varepsilon})) \\
            & \qquad - F(t_i, x_{a, i}^{\varepsilon}, \partial_x \nu^\varepsilon (t_i, x^{\pi, b} - x_{a, i}^{\varepsilon})) \bigr) (t_{i + 1} - t_i)
            + m(\delta) (t_{i + 1} - t_i).
        \end{align*}
        We have (recall the choice of $\Lambda_L$ and the formulas for the derivatives of $\nu^\varepsilon$)
        \begin{align*}
            & \partial_t \nu^\varepsilon (t_i, x^{\pi, b} - x_{a, i}^{\varepsilon})
            + F(t_i, x^{\pi, b}, \partial_x \nu^\varepsilon (t_i, x^{\pi, b} - x_{a, i}^{\varepsilon}))
            - F(t_i, x_{a, i}^{\varepsilon}, \partial_x \nu^\varepsilon (t_i, x^{\pi, b} - x_{a, i}^{\varepsilon})) \\
            & \leq \partial_t \nu^\varepsilon (t_i, x^{\pi, b} - x_{a, i}^{\varepsilon})
            + \Lambda_L (1 + |\partial_x \nu^\varepsilon (t_i, x^{\pi, b} - x_{a, i}^{\varepsilon})|)
            \|x^{\pi, b}(\cdot \wedge t_i) - x_{a, i}^{\varepsilon}(\cdot \wedge t_i)\|_\infty \\
            & \leq - \frac{2 \Lambda_L e^{- \Lambda_L t_i / \varkappa}}{\varkappa \varepsilon}
            \beta^\varepsilon(t_i, x^{\pi, b} - x_{a, i}^{\varepsilon}) \\
            & \quad + \Lambda_L \biggl( 1 + \frac{\alpha^\varepsilon(t_i) \theta(t_i, x^{\pi, b} - x_{a, i}^{\varepsilon}, 0)}{2 \beta^\varepsilon(t_i, x^{\pi, b} - x_{a, i}^{\varepsilon})}
            |x^{\pi, b}(t_i) - x_{a, i}^{\varepsilon}(t_i)| \biggr)
            \|x^{\pi, b}(\cdot \wedge t_i) - x_{a, i}^{\varepsilon}(\cdot \wedge t_i)\|_\infty.
        \end{align*}
        Note that $\theta(t_i, x^{\pi, b} - x_{a, i}^{\varepsilon}, 0) \leq 4$ and
        \begin{align*}
            |x^{\pi, b}(t_i) - x_{a, i}^{\varepsilon}(t_i)|
            & \leq \|x^{\pi, b}(\cdot \wedge t_i) - x_{a, i}^{\varepsilon}(\cdot \wedge t_i)\|_\infty \\
            & \leq \sqrt{\frac{\Upsilon(t_i, x^{\pi, b} - x_{a, i}^{\varepsilon})}{\varkappa}}
            \leq \frac{\beta^\varepsilon(t_i, x^{\pi, b} - x_{a, i}^{\varepsilon})}{\sqrt{\varkappa}}.
        \end{align*}
        Hence (recall the definition of $\alpha^\varepsilon$),
        \begin{align*}
            & - \frac{2 \Lambda_L e^{- \Lambda_L t_i / \varkappa}}{\varkappa \varepsilon}
            \beta^\varepsilon(t_i, x^{\pi, b} - x_{a, i}^{\varepsilon}) \\
            & + \Lambda_L \biggl( 1 + \frac{\alpha^\varepsilon(t_i) \theta(t_i, x^{\pi, b} - x_{a, i}^{\varepsilon}, 0)}{2 \beta^\varepsilon(t_i, x^{\pi, b} - x_{a, i}^{\varepsilon})}
            |x^{\pi, b}(t_i) - x_{a, i}^{\varepsilon}(t_i)| \biggr)
            \|x^{\pi, b}(\cdot \wedge t_i) - x_{a, i}^{\varepsilon}(\cdot \wedge t_i)\|_\infty \\
            & \quad \leq - \frac{2 \Lambda_L e^{- \Lambda_L t_i / \varkappa}}{\varkappa \varepsilon}
            \beta^\varepsilon(t_i, x^{\pi, b} - x_{a, i}^{\varepsilon})
            + \Lambda_L \biggl( 1 + \frac{2 \alpha^\varepsilon(t_i)}{\sqrt{\varkappa}} \biggr)
            \frac{\beta^\varepsilon(t_i, x^{\pi, b} - x_{a, i}^{\varepsilon})}{\sqrt{\varkappa}} \\
            & \quad = \frac{\Lambda_L}{\varkappa \varepsilon}
            \beta^\varepsilon(t_i, x^{\pi, b} - x_{a, i}^{\varepsilon})
            \biggl( - 2 e^{- \Lambda_L t_i / \varkappa} + \sqrt{\varkappa} \varepsilon
            + 2 \varepsilon \alpha^\varepsilon(t_i) \biggr) \\
            & \quad = \frac{\Lambda_L}{\varkappa \varepsilon}
            \beta^\varepsilon(t_i, x^{\pi, b} - x_{a, i}^{\varepsilon})
            \biggl( - 2 e^{- \Lambda_L t_i / \varkappa} + \sqrt{\varkappa} \varepsilon
            + 2 \varepsilon \frac{e^{- 2 \Lambda_L t_i / \varkappa} - \varepsilon \sqrt{\varkappa}}{\varepsilon} \biggr) \\
            & \quad = \frac{\Lambda_L}{\varkappa \varepsilon}
            \beta^\varepsilon(t_i, x^{\pi, b} - x_{a, i}^{\varepsilon})
            \biggl( - 2 e^{- \Lambda_L t_i / \varkappa} + \sqrt{\varkappa} \varepsilon
            + 2 e^{- 2 \Lambda_L t_i / \varkappa} - 2 \varepsilon \sqrt{\varkappa} \biggr) \\
            & \quad = \frac{\Lambda_L}{\varkappa \varepsilon}
            \beta^\varepsilon(t_i, x^{\pi, b} - x_{a, i}^{\varepsilon})
            \biggl(  \sqrt{\varkappa} \varepsilon - 2 \varepsilon \sqrt{\varkappa} \biggr)
            = - \frac{\Lambda_L}{\sqrt{\varkappa}}
            \beta^\varepsilon(t_i, x^{\pi, b} - x_{a, i}^{\varepsilon})
            < 0.
        \end{align*}
        As a result, we obtain the desired inequality \eqref{E:FeedbackGame:DesiredInequality}.

        From this inequality, we derive that, for every number $\delta > 0$, every partition $\pi$ with $|\pi| \leq \delta$, and every control $b \in \mathcal{B}^{t_0}$,
        \begin{equation*}
            \int_{t_0}^{T} \ell(t, x^{\pi, b}, a^{\pi, x_0, \mathbf{a}^\varepsilon, b}, b(t)) \, dt
            + u_a^\varepsilon(T, x^{\pi, b})
            \leq u_a^\varepsilon(t_0, x_0) + m(\delta) (T - t_0).
        \end{equation*}

        Further, note that, for every $\tilde{x} \in \mathcal{X}^{L_f}(t_0, x_0)$, we have
        \begin{align*}
            u_a^\varepsilon(t_0, x_0)
            & \leq u(t_0, x_0) + \nu^\varepsilon(t_0, x_0 - \tilde{x})
            = u(t_0, x_0) + \nu^\varepsilon(t_0, x_0 - x_0) \\
            & = u(t_0, x_0) + \frac{e^{- 2 \Lambda_L t_0 / \varkappa} - \varepsilon \sqrt{\varkappa}}{\varepsilon} \varepsilon^2
            \leq u(t_0, x_0) + \varepsilon.
        \end{align*}
        On the other hand, we have
        \begin{align*}
            u_a^\varepsilon(T, x^{\pi, b})
            & = u(T, x_a^{\varepsilon, T, x^{\pi, b}})
            + \nu^\varepsilon(T, x^{\pi, b} - x_a^{\varepsilon, T, x^{\pi, b}}) \\
            & = h(x_a^{\varepsilon, T, x^{\pi, b}})
            + \nu^\varepsilon(T, x^{\pi, b} - x_a^{\varepsilon, T, x^{\pi, b}})
        \end{align*}
        and
        \begin{equation*}
            u_a^\varepsilon(T, x^{\pi, b})
            \leq
            u(T, x^{\pi, b})
            + \nu^\varepsilon(T, x^{\pi, b} - x^{\pi, b})
            \leq h(x^{\pi, b}) + \varepsilon.
        \end{equation*}
        Therefore,
        \begin{equation*}
            \nu^\varepsilon(T, x^{\pi, b} - x_a^{\varepsilon, T, x^{\pi, b}})
            = u_a^\varepsilon(T, x^{\pi, b}) - h(x_a^{\varepsilon, T, x^{\pi, b}})
            \leq h(x^{\pi, b}) - h(x_a^{\varepsilon, T, x^{\pi, b}}) + \varepsilon
            \leq C_1 + \varepsilon
        \end{equation*}
        with $C_1 \coloneq 2 \max_{x \in \mathcal{X}^{L_f}(t_0, x_0)} |h(x)|$.
        By the definition of $\nu^\varepsilon$, we get
        \begin{equation*}
            \frac{e^{- 2 \Lambda_L T / \varkappa} - \varepsilon \sqrt{\varkappa}}{\varepsilon}
            \sqrt{\varepsilon^4 + \Upsilon(T, x^{\pi, b} - x_a^{\varepsilon, T, x^{\pi, b}})}
            \leq C_1 + \varepsilon,
        \end{equation*}
        which is equivalent to
        \begin{equation*}
            \sqrt{\varepsilon^4 + \Upsilon(T, x^{\pi, b} - x_a^{\varepsilon, T, x^{\pi, b}})}
            \leq \frac{\varepsilon (C_1 + \varepsilon)}{e^{- 2 \Lambda_L T / \varkappa} - \varepsilon \sqrt{\varkappa}}.
        \end{equation*}
        Hence,
        \begin{align*}
            & \|x^{\pi, b} - x_a^{\varepsilon, T, x^{\pi, b}}\|_\infty
            \leq \sqrt{\frac{\Upsilon(T, x^{\pi, b} - x_a^{\varepsilon, T, x^{\pi, b}})}{\varkappa}}
            \leq \frac{\varepsilon (C_1 + \varepsilon)}{e^{- 2 \Lambda_L T / \varkappa} - \varepsilon \sqrt{\varkappa}} \frac{1}{\sqrt{\varkappa}} \\
            & \quad \leq \frac{\varepsilon (C_1 + \varepsilon_0)}{e^{- 2 \Lambda_L T / \varkappa} - \varepsilon_0 \sqrt{\varkappa}} \frac{1}{\sqrt{\varkappa}}
            = \frac{\varepsilon (C_1 + e^{- 2 \Lambda_L T / \varkappa} / (2 \sqrt{\varkappa}))}{e^{- 2 \Lambda_L T / \varkappa} - e^{- 2 \Lambda_L T / \varkappa} / (2 \sqrt{\varkappa}) \sqrt{\varkappa}} \frac{1}{\sqrt{\varkappa}} \\
            & \quad = \frac{2 \varepsilon (C_1 + e^{- 2 \Lambda_L T / \varkappa} / (2 \sqrt{\varkappa}))}{e^{- 2 \Lambda_L T / \varkappa}} \frac{1}{\sqrt{\varkappa}}
            = \varepsilon (2 C_1 e^{2 \Lambda_L T / \varkappa} +  1 / \sqrt{\varkappa}) \frac{1}{\sqrt{\varkappa}} \\
            & \quad = \varepsilon \frac{2 C_1 \sqrt{\varkappa} e^{2 \Lambda_L T / \varkappa} + 1}{\varkappa}
            = C_2 \varepsilon
        \end{align*}
        with $C_2 \coloneq (2 C_1 \sqrt{\varkappa} e^{2 \Lambda_L T / \varkappa} + 1) / \varkappa$.
        As a result, we have
        \begin{align*}
            u_a^\varepsilon(T, x^{\pi, b})
            & = h(x_a^{\varepsilon, T, x^{\pi, b}})
            + \nu^\varepsilon(T, x^{\pi, b} - x_a^{\varepsilon, T, x^{\pi, b}}) \\
            & \geq h(x_a^{\varepsilon, T, x^{\pi, b}})
            \geq h(x^{\pi, b})
            - m_h(\|x^{\pi, b} - x_a^{\varepsilon, T, x^{\pi, b}}\|_\infty)
            \geq h(x^{\pi, b}) - m_h(C_2 \varepsilon),
        \end{align*}
        where $m_h$ is the modulus of continuity of $h|_{\mathcal{X}^{L_f}(t_0, x_0)}$.

        Summarizing, we conclude that, for every number $\delta > 0$, every partition $\pi$ with $|\pi| \leq \delta$, and every control $b \in \mathcal{B}^{t_0}$,
        \begin{align*}
            J(t_0, x_0, a^{\pi, x_0, \mathbf{a}^\varepsilon, b}, b)
            & = \int_{t_0}^{T} \ell(t, x^{\pi, b}, a^{\pi, x_0, \mathbf{a}^\varepsilon, b}, b(t)) \, dt
            + h(x^{\pi, b}) \\
            & \leq \int_{t_0}^{T} \ell(t, x^{\pi, b}, a^{\pi, x_0, \mathbf{a}^\varepsilon, b}, b(t)) \, dt
            + u_a^\varepsilon(T, x^{\pi, b}) + m_h(C_2 \varepsilon) \\
            & \leq u_a^\varepsilon(t_0, x_0) + m(\delta) (T - t_0) + m_h(C_2 \varepsilon) \\
            & \leq u(t_0, x_0) + m(\delta) (T - t_0) + m_h(C_2 \varepsilon) + \varepsilon.
        \end{align*}
        Thus,
        \begin{equation*}
            J_a(t_0, x_0; \mathbf{a}^\varepsilon)
            = \inf_{\delta > 0} \sup_{|\pi| \leq \delta, \, b \in \mathcal{B}^{t_0}}
            J(t_0, x_0, a^{\pi, x_0, \mathbf{a}^\varepsilon, b}, b)
            \leq u(t_0, x_0) + m_h(C_2 \varepsilon) + \varepsilon,
        \end{equation*}
        and the result follows.
    \end{proof}

\appendix

\section{Additional proofs}

    \subsection{Proof of Proposition \ref{P:Minimax:Infinitesimal}.}
    \label{S:proof_P:Minimax:Infinitesimal}

        We prove part (ii) only, since the proof for (i) is similar.

        Suppose that $u$ is a minimax $L$-supersolution of \eqref{TVP} and fix $(t_0, x_0) \in [0, T) \times C([0,T],H)$ and $z \in H$.
        Then, by the definition, there exists $x \in \mathcal{X}^L(t_0, x_0)$ such that
        \begin{equation*}
            u(t_0, x_0)
            \geq \int_{t_0}^{t} \bigl( (- f^x(s), z) + F(s, x, z) \bigr) \, ds + u(t, x),
            \quad t \in [t_0, T].
        \end{equation*}

        Fix $\varepsilon > 0$.
        Since $F$ is continuous by $\mathbf{H}(F)$(i) and $[t_0, T] \times \mathcal{X}^L(t_0, x_0)$ is compact, there exists $\delta_\ast \in (0, T - t_0)$ such that, for every $s_1$, $s_2 \in [t_0, T]$ with $|s_1 - s_2| \leq \delta_\ast$ and every $x \in \mathcal{X}^L(t_0, x_0)$,
        \begin{equation*}
            | F(s_1, x, z) - F(s_2, x, z) |
            \leq \varepsilon.
        \end{equation*}
        Hence, for every $s \in [t_0, t_0 + \delta_\ast]$ and every $x \in \mathcal{X}^L (t_0, x_0)$, we have
        \begin{equation*}
            | F(s, x, z) - F(t_0, x_0, z) |
            = | F(s, x, z) - F(t_0, x, z) |
            \leq \varepsilon.
        \end{equation*}
        As a result, for every $\delta \in (0, \delta_\ast]$ and every $t \in [t_0, t_0 + \delta]$, we derive
        \begin{align*}
            0
            & \geq \frac{1}{t - t_0} \biggl( u(t, x) - u(t_0, x_0)
            + \int_{t_0}^{t} \bigl( (- f^x(s), z) + F(s, x, z) \bigr) \, ds \biggr) \\
            & \geq \frac{1}{t - t_0} \biggl( u(t, x) - u(t_0, x_0)
            + \int_{t_0}^{t} (- f^x(s), z) \, ds \biggr)
            + F(t_0, x_0, z) - \varepsilon \\
            & \geq \inf \biggl\{
            \frac{1}{t - t_0}
            \biggl( u(t, x) - u(t_0, x_0)
            + \int_{t_0}^{t} (- f^x(s), z) \, ds \biggr) \colon \\
            & \qquad \qquad
            t \in (t_0, t_0 + \delta], \, x \in \mathcal{X}^L(t_0, x_0) \biggr\}
            + F(t_0, x_0, z) - \varepsilon.
        \end{align*}
        So, we obtain \eqref{supersolution_infinitesimal}.

        On the other hand, for the sake of a contradiction, we assume that $u$ is not a minimax $L$-supersolution.
        By \cite[Lemma 3.6]{Bayraktar_Keller_2018}, there exist $(t_0, x_0, z) \in [0, T) \times
        C([0,T], H) \times H$ and $t_1 \in [t_0, T]$ such that, for every $x \in \mathcal{X}^L(t_0, x_0)$,
        \begin{equation*}
            u(t_1, x) - u(t_0, x_0)
            + \int_{t_0}^{t_1} \bigl( (- f^x(s), z) + F(s, x, z) \bigr) \, ds
            > 0.
        \end{equation*}
        Note that $t_1 > t_0$.
        By lower semi-continuity of the restriction of $u$ on $[t_0, T] \times \mathcal{X}^L(t_0, x_0)$, continuity of $F$, and compactness of $\mathcal{X}^L(t_0, x_0)$, there exists a number $\tilde{\delta} > 0$ such that, for every $x \in \mathcal{X}^L(t_0, x_0)$,
        \begin{equation*}
            u(t_1, x) - u(t_0, x_0)
            + \int_{t_0}^{t_1} \bigl( (- f^x(s), z) + F(s, x, z) \bigr) \, ds
            > \tilde{\delta}.
        \end{equation*}
        Denote by $\tilde{t}$ the supremum of all $t \in [t_0, t_1]$ such that
        \begin{equation*}
            \min_{x \in \mathcal{X}^L(t_0, x_0)} \biggl( u(t, x) - u(t_0, x_0)
            + \int_{t_0}^{t} \bigl( (- f^x(s), z) + F(s, x, z) \bigr) \, ds \biggr)
            \leq \frac{t - t_0}{t_1 - t_0} \tilde{\delta}.
        \end{equation*}
        We have $\tilde{t} < t_1$ and, by lower semi-continuity of the restriction of $u$ on $[t_0, T] \times \mathcal{X}^L(t_0, x_0)$, continuity of $F$, and compactness of $\mathcal{X}^L(t_0, x_0)$, we derive that $\tilde{t}$ is the maximum.
        In particular, there exists a function $\tilde{x} \in \mathcal{X}^L(t_0, x_0)$ such that
        \begin{equation*}
            u(\tilde{t}, \tilde{x}) - u(t_0, x_0)
            + \int_{t_0}^{\tilde{t}} \bigl( (- f^{\tilde{x}}(s), z) + F(s, \tilde{x}, z) \bigr) \, ds
            \leq \frac{\tilde{t} - t_0}{t_1 - t_0} \tilde{\delta}.
        \end{equation*}
        By the assumption (see \eqref{supersolution_infinitesimal}), there exist a number $t \in (\tilde{t}, t_1]$ and a function $x \in \mathcal{X}^L(\tilde{t}, \tilde{x})$ such that
        \begin{equation*}
            \frac{1}{t - \tilde{t}}
            \biggl( u(t, x) - u(\tilde{t}, \tilde{x})
            + \int_{\tilde{t}}^{t} (- f^x(s), z) \, ds \biggr)
            + F(\tilde{t}, \tilde{x}, z)
            \leq \frac{\tilde{\delta}}{2 (t_1 - t_0)}.
        \end{equation*}
        Arguing as in the first part of the proof, we can assume that $t - \tilde{t}$ is small enough in order to have
        \begin{equation*}
            \frac{1}{t - \tilde{t}}
            \biggl( u(t, x) - u(\tilde{t}, \tilde{x})
            + \int_{\tilde{t}}^{t} \bigl( (- f^x(s), z) + F(s, x, z) \bigr) \, ds \biggr)
            \leq \frac{\tilde{\delta}}{t_1 - t_0}.
        \end{equation*}
        Then, we get
        \begin{equation*}
            u(t, x) - u(t_0, x_0)
            + \int_{t_0}^{t} \bigl( (- f^x(s), z) + F(s, x, z) \bigr) \, ds
            \leq \frac{\tilde{t} - t_0}{t_1 - t_0} \tilde{\delta}
            + \frac{t - \tilde{t}}{t_1 - t_0} \tilde{\delta}
            = \frac{t - t_0}{t_1 - t_0} \tilde{\delta},
        \end{equation*}
        a contradiction.
        The proof is complete.

    \subsection{Proof of Proposition \ref{proposition_v_+_DPP}}
    \label{S:proof_proposition_v_+_DPP}

        Denote the right-hand side of \eqref{DPP} by $\hat{v}$.
        For every $\varepsilon > 0$, take $\mathfrak{b} \in \mathfrak{B}^{t_0}$ such that
        \begin{equation*}
            \hat{v}
            \leq \inf_{a \in \mathcal{A}^{t_0}}
            \biggl( v_+(\theta, x^{t_0, x_0, a, \mathfrak{b}[a]})
            + \int_{t_0}^{\theta} \ell(s, x^{t_0, x_0, a, \mathfrak{b}[a]}, a(s), \mathfrak{b}[a](s)) \, d s \biggr) + \varepsilon.
        \end{equation*}
        For every $a \in \mathcal{A}^{t_0}$, choose $\bar{\mathfrak{b}}^{x^{t_0, x_0, a, \mathfrak{b}[a]}} \in \mathfrak{B}^\theta$ from the condition
        \begin{equation*}
            v_+(\theta, x^{t_0, x_0, a, \mathfrak{b}[a]})
            \leq \inf_{\bar{a} \in \mathcal{A}^\theta}
            J (\theta, x^{t_0, x_0, a, \mathfrak{b}[a]}, \bar{a},
            \bar{\mathfrak{b}}^{x^{t_0, x_0, a, \mathfrak{b}[a]}}[\bar{a}]) + \varepsilon.
        \end{equation*}
        Consider a function $\tilde{\mathfrak{b}} \colon \mathcal{A}^{t_0} \to \mathcal{B}^{t_0}$ that maps $a \in \mathcal{A}^{t_0}$ to
        \begin{equation*}
            b(s)
            \coloneq \begin{cases}
                \mathfrak{b}[a](s) & \text{if } s \in [t_0, \theta), \\
                \bar{\mathfrak{b}}^{x^{t_0, x_0, a, \mathfrak{b}[a]}}[a|_{[\theta, T]}](s) & \text{if } s \in [\theta, T].
              \end{cases}
        \end{equation*}
        It can be directly verified that $\tilde{\mathfrak{b}}$ is non-anticipating, i.e., $\tilde{\mathfrak{b}} \in \mathfrak{B}^{t_0}$.
        Hence,
        \begin{equation*}
            v_+(t_0, x_0)
            \geq \inf_{a \in \mathcal{A}^{t_0}}
            J(t_0, x_0, a, \tilde{\mathfrak{b}}[a]).
        \end{equation*}
        For every $a \in \mathcal{A}^{t_0}$, we get
        \begin{align*}
            & J(t_0, x_0, a, \tilde{\mathfrak{b}}[a]) \\
            & \quad = J (\theta, x^{t_0, x_0, a, \mathfrak{b}[a]}, a|_{[\theta, T]}, \bar{\mathfrak{b}}^{x^{t_0, x_0, a, \mathfrak{b}[a]}}[a|_{[\theta, T]}]) \\
            & \qquad + \int_{t_0}^{\theta} \ell(s, x^{t_0, x_0, a, \mathfrak{b}[a]}(s), a(s), \mathfrak{b}[a](s)) \, d s \\
            & \quad \geq v_+(\theta, x^{t_0, x_0, a, \mathfrak{b}[a]})
            + \int_{t_0}^{\theta} \ell(s, x^{t_0, x_0, a, \mathfrak{b}[a]}(s), a(s), \mathfrak{b}[a](s)) \, d s
            - \varepsilon \\
            & \quad \geq \hat{v} - 2 \varepsilon.
        \end{align*}
        As a result, $v_+(t_0, x_0) \geq \hat{v} - 2 \varepsilon$, which gives $v_+(t_0, x_0) \geq \hat{v}$ since $\varepsilon > 0$ is arbitrary.

        It remains to show that $\hat{v} \geq v_+(t_0, x_0)$.
        For every $\varepsilon > 0$, take $\mathfrak{b}^\ast \in \mathfrak{B}^{t_0}$ such that
        \begin{equation*}
            v_+(t_0, x_0)
            \leq \inf_{a \in \mathcal{A}^{t_0}}
            J(t_0, x_0, a, \mathfrak{b}^\ast[a]) + \varepsilon
        \end{equation*}
        and then choose $a^\ast \in \mathcal{A}^{t_0}$ from the condition
        \begin{equation*}
            \hat{v}
            \geq v_+(\theta, x^{t_0, x_0, a^\ast, \mathfrak{b}^\ast[a^\ast]})
            + \int_{t_0}^{\theta} \ell(s, x^{t_0, x_0, a^\ast, \mathfrak{b}^\ast[a^\ast]}(s),
            a^\ast(s), \mathfrak{b}^\ast[a^\ast](s)) \, d s
            - \varepsilon.
        \end{equation*}
        Consider a mapping $\bar{\mathfrak{b}}^\ast \colon \mathcal{A}^\theta \to \mathcal{B}^\theta$ that maps $\bar{a} \in \mathcal{A}^\theta$ to $\bar{b} \coloneq \mathfrak{b}^\ast[a]|_{[\theta, T]}$, where
        \begin{equation*}
            a(s)
            \coloneq
            \begin{cases}
                a^\ast(s) & \text{if } s \in [t, \theta), \\
                \bar{a}(s) & \text{if } s \in [\theta, T].
            \end{cases}
        \end{equation*}
        It can be verified that $\bar{\mathfrak{b}}^\ast$ is non-anticipative, i.e., $\bar{\mathfrak{b}}^\ast \in \mathfrak{B}^{\theta}$.
        Hence, there is an $\bar{a}^\ast \in \mathcal{A}^\theta$ such that
        \begin{equation*}
            v_+(\theta, x^{t_0, x_0, a^\ast, \mathfrak{b}^\ast[a^\ast]})
            \geq J(\theta,  x^{t_0, x_0, a^\ast, \mathfrak{b}^\ast[a^\ast]}, \bar{a}^\ast, \bar{\mathfrak{b}}^\ast[\bar{a}^\ast])
            - \varepsilon.
        \end{equation*}
        Consequently, we obtain
        \begin{align*}
            \hat{v}
            & \geq J(\theta,x^{t_0, x_0, a^\ast, \mathfrak{b}^\ast[a^\ast]}, \bar{a}^\ast, \bar{\mathfrak{b}}^\ast[\bar{a}^\ast])
            + \int_{t_0}^{\theta} \ell(s, x^{t_0, x_0, a^\ast, \mathfrak{b}^\ast[a^\ast]}(s), a^\ast(s), \mathfrak{b}^\ast[a^\ast](s) ) \, d s
            - 2 \varepsilon \\
            & = J(t_0, x_0, \tilde{a}, \tilde{b}) - 2 \varepsilon,
        \end{align*}
        where
        \begin{equation*}
            \tilde{a}(s)
            \coloneq \begin{cases}
                a^\ast(s) & \text{if } s \in [t_0, \theta), \\
                \bar{a}^\ast(s) & \text{if } s \in [\theta, T],
            \end{cases}
            \quad \tilde{b}(s)
            \coloneq \begin{cases}
                \mathfrak{b}^\ast[a^\ast](s) & \text{if } s \in [t_0, \theta), \\
                \bar{\mathfrak{b}}^\ast[\bar{a}^\ast](s) & \text{if } s \in [\theta, T].
            \end{cases}
        \end{equation*}
        Observing that $\tilde{b}(s) = \mathfrak{b}^\ast[\tilde{a}](s)$ for a.e. $s \in [t_0, T]$ by construction, we derive
        \begin{equation*}
            J(t_0, x_0, \tilde{a}, \tilde{b})
            = J(t_0, x_0, \tilde{a}, \mathfrak{b}^\ast[\tilde{a}])
            \geq v_+(t_0, x_0) - \varepsilon.
        \end{equation*}
        Thus, we have $\hat{v} \geq v_+(t_0, x_0) - 3 \varepsilon$, which yields $\hat{v} \geq v_+(t_0, x_0)$ since $\varepsilon > 0$ is arbitrary.
        The proof is complete.

    \subsection{Proof of Proposition \ref{proposition_v_+_continuous}}
    \label{S:proof_proposition_v_+_continuous}

        Fix $t^\ast \in [0, T)$, $x^\ast \in C([0, T], H)$, and $L \geq 0$.
        We will show that the restriction of $v_+$ to $[t^\ast, T] \times \mathcal{X}^L(t^\ast, x^\ast)$ is continuous.
        Without loss of generality, we can assume that $L \geq L_f$.
        Take a number $\Lambda_L \coloneq \Lambda_{L, t^\ast, x^\ast} > 0$ from $\mathbf{H}(f)$(iii) and, using compactness of $\mathcal{X}^L(t^\ast, x^\ast)$, choose a number $C > 0$ such that $\|x\|_\infty \leq C$ for all $x \in \mathcal{X}^L(t^\ast, x^\ast)$.
        Put $C_1 \coloneq e^{\Lambda_L T} \max\{1, 4 L(1 + C)\}$.
        Then, arguing similarly to the proof of \cite[Proposition 7.2]{Bayraktar_Keller_2018}, it can be shown that, for every $t_0$, $t_1 \in [t^\ast, T]$ with $t_1 \geq t_0$, every $x_0$,  $x_1 \in \mathcal{X}^L(t^\ast, x^\ast)$, every $a \in \mathcal{A}^{t_0}$, and every $b \in \mathcal{B}^{t_0}$,
        \begin{equation*}
            \|x^{t_0, x_0, a, b} - x^{t_1, x_1, a|_{[t_1, T]}, b|_{[t_1, T]}}\|_\infty
            \leq C_1 \bigl( t_1 - t_0 + \|x_0(\cdot \wedge t_0) - x_1(\cdot \wedge t_0)\|_\infty \bigr).
        \end{equation*}
        Since $\ell|_{[t^\ast, T] \times \mathcal{X}^L(t^\ast,x^\ast) \times P \times Q}$ is bounded by $\mathbf{H}(\ell)$(i) and the compactness of $\mathcal{X}^L(t^\ast, x^\ast)$, there exists a number $C_2 > 0$ such that
        \begin{equation*}
            |\ell(t, x, p, q)|
            \leq C_2
        \end{equation*}
        for all $t \in [t^\ast, T]$, $p \in P$, $q \in Q$.
        Denote by $m_h$ the modulus of continuity of $h|_{\mathcal{X}^L(t^\ast, x^\ast)}$, which exists by $\mathbf{H}(h)$ and the compactness of $\mathcal{X}^L(t^\ast, x^\ast)$.

        Let $\delta > 0$ and consider $(t_0, x_0)$, $(t_1, x_1) \in \mathcal{X}^L(t^\ast,x^\ast)$  with $t_1 \geq t_0$ and such that $\mathbf{d}_\infty((t_0, x_0), (t_1, x_1)) \leq \delta$.
        Then, for every $a \in \mathcal{A}^{t_0}$ and every $b \in \mathcal{B}^{t_0}$, we have
        \begin{align*}
            & | J(t_0, x_0, a, b) - J(t_1, x_1, a, b) | \\
            & \quad \leq \int_{t_0}^{t_1} | \ell(s, x^{t_0, x_0, a, b}, a(s), b(s)) | \, ds \\
            & \qquad + \int_{t_1}^{T} | \ell(s, x^{t_0, x_0, a, b}, a(s), b(s))
            - \ell(s, x^{t_1, x_1, a|_{[t_1, T]}, b|_{[t_1, T]}}, a(s), b(s)) | \, ds \\
            & \qquad + | h(x^{t_0, x_0, a, b}) - h(x^{t_1, x_1, a|_{[t_1, T]}, b|_{[t_1, T]}}) | \\
            & \quad \leq C_2 (t_1 - t_0)
            + T \Lambda_{L} \|x^{t_0, x_0, a, b} - x^{t_1, x_1, a|_{[t_1, T]}, b|_{[t_1, T]}}\|_\infty \\
            & \qquad + m_h(  \|x^{t_0, x_0, a, b} - x^{t_1, x_1, a|_{[t_1, T]}, b|_{[t_1, T]}}\|_\infty ) \\
            & \quad \leq C_2 \delta
            + T \Lambda_{L} C_1 \delta
            + m_h(C_1 \delta).
        \end{align*}

        Fix $\varepsilon > 0$ and choose $\delta > 0$ from the condition $C_2 \delta + T \Lambda_{L_f} C_1 \delta + m_h(C_1 \delta) \leq \varepsilon / 3$.
        Take arbitrarily $(t_0, x_0)$, $(t_1, x_1) \in \mathcal{X}^L(t^\ast,x^\ast)$
        with $t_1 \geq t_0$ and $\mathbf{d}_\infty((t_0, x_0), (t_1, x_1)) \leq \delta$.

        First, consider the difference
        \begin{equation*}
            v_+(t_0, x_0) - v_+(t_1, x_1)
            = \sup_{\mathfrak{b} \in \mathfrak{B}^{t_0}} \inf_{a \in \mathcal{A}^{t_0}}
            J(t_0, x_0, a, \mathfrak{b}[a])
            - \sup_{\mathfrak{b} \in \mathfrak{B}^{t_1}} \inf_{a \in \mathcal{A}^{t_1}}
            J(t_1, x_1, a, \mathfrak{b}[a]).
        \end{equation*}
        Take $\mathfrak{b}_\varepsilon^0 \in \mathfrak{B}^{t_0}$ such that
        \begin{equation*}
            \sup_{\mathfrak{b} \in \mathfrak{B}^{t_0}} \inf_{a \in \mathcal{A}^{t_0}}
            J(t_0, x_0, a, \mathfrak{b}[a])
            \leq \inf_{a \in \mathcal{A}^{t_0}}
            J(t_0, x_0, a, \mathfrak{b}_\varepsilon^0[a]) + \varepsilon / 3.
        \end{equation*}
        Define $\mathfrak{b}^1_\varepsilon \colon \mathcal{A}^{t_1} \to \mathcal{B}^{t_1}$ as follows:
        take arbitrarily $p \in P$ and, for every $a \in \mathcal{A}^{t_1}$, put $\mathfrak{b}^1_\varepsilon[a] \coloneq \mathfrak{b}_\varepsilon^0 [a^0]|_{[t_1, T]}$, where
        \begin{equation*}
            a^0(t)
            \coloneq \begin{cases}
                p, & \text{if } t \in [t_0, t_1) \\
                a(t), & \text{if } t \in [t_1, T].
            \end{cases}
        \end{equation*}
        Then, $\mathfrak{b}^1_\varepsilon$ is non-anticipating since $\mathfrak{b}_\varepsilon^0$ is non-anticipating, i.e.,
        we have $\mathfrak{b}^1_\varepsilon \in \mathfrak{B}^{t_1}$.
        Take $a_\varepsilon^1 \in \mathcal{A}^{t_1}$ such that
        \begin{equation*}
            \sup_{\mathfrak{b} \in \mathfrak{B}^{t_1}} \inf_{a \in \mathcal{A}^{t_1}}
            J(t_1, x_1, a, \mathfrak{b}[a])
            \geq \inf_{a \in \mathcal{A}^{t_1}}
            J(t_1, x_1, a, \mathfrak{b}_\varepsilon^1[a])
            \geq J(t_1, x_1, a_\varepsilon^1, \mathfrak{b}_\varepsilon^1[a_\varepsilon^1]) - \varepsilon / 3.
        \end{equation*}
        Consider
        \begin{equation*}
            a_\varepsilon^0(t)
            \coloneq \begin{cases}
                p, & \text{if } t \in [t_0, t_1) \\
                a_\varepsilon^1(t), & \text{if } t \in [t_1, T].
            \end{cases}
        \end{equation*}
        As a result, we obtain
        \begin{equation*}
            v_+(t_0, x_0) - v_+(t_1, x_1)
            \leq J(t_0, x_0, a_\varepsilon^0, \mathfrak{b}_\varepsilon^0[a_\varepsilon^0])
            - J(t_1, x_1, a_\varepsilon^1, \mathfrak{b}_\varepsilon^1[a_\varepsilon^1]) + 2 \varepsilon / 3.
        \end{equation*}
        Note that, by construction, $a_\varepsilon^1 = a_\varepsilon^0|_{[t_1, T]}$ and $\mathfrak{b}_\varepsilon^1[a_\varepsilon^1] = \mathfrak{b}_\varepsilon^0[a_\varepsilon^0]|_{[t_1, T]}$.
        Hence, using the estimate established above and the choice of $\delta$, we get
        \begin{equation*}
            v_+(t_0, x_0) - v_+(t_1, x_1)
            \leq \varepsilon.
        \end{equation*}

        Now, consider the difference
        \begin{equation*}
            v_+(t_1, x_1) - v_+(t_0, x_0)
            = \sup_{\mathfrak{b} \in \mathfrak{B}^{t_1}} \inf_{a \in \mathcal{A}^{t_1}}
            J(t_1, x_1, a, \mathfrak{b}[a])
            - \sup_{\mathfrak{b} \in \mathfrak{B}^{t_0}} \inf_{a \in \mathcal{A}^{t_0}}
            J(t_0, x_0, a, \mathfrak{b}[a]).
        \end{equation*}
        Take $\mathfrak{b}_\varepsilon^1 \in \mathfrak{B}^{t_1}$ such that
        \begin{equation*}
            \sup_{\mathfrak{b} \in \mathfrak{B}^{t_1}} \inf_{a \in \mathcal{A}^{t_1}}
            J(t_1, x_1, a, \mathfrak{b}[a])
            \leq \inf_{a \in \mathcal{A}^{t_1}}
            J(t_1, x_1, a, \mathfrak{b}_\varepsilon^1[a]) + \varepsilon / 3.
        \end{equation*}
        Define $\mathfrak{b}_\varepsilon^0 \colon \mathcal{A}^{t_0} \to \mathcal{B}^{t_0}$ as follows:
        take arbitrarily $q \in Q$ and, for every $a \in \mathcal{A}^{t_0}$, put
        \begin{equation*}
            \mathfrak{b}^0_\varepsilon[a](t)
            \coloneq \begin{cases}
                q, & \text{if } t \in [t_0, t_1) \\
                \mathfrak{b}_\varepsilon^1 [a|_{[t_1, T]}](t), & \text{if } t \in [t_1, T].
            \end{cases}
        \end{equation*}
        Then, $\mathfrak{b}_\varepsilon^0$ is non-anticipating since $\mathfrak{b}_\varepsilon^1$ is non-anticipating, i.e., we have $\mathfrak{b}_\varepsilon^0  \in \mathfrak{B}^{t_0}$.
        Take $a_\varepsilon^0 \in \mathcal{A}^{t_0}$ such that
        \begin{equation*}
            \sup_{\mathfrak{b} \in \mathfrak{B}^{t_0}} \inf_{a \in \mathcal{A}^{t_0}}
            J(t_0, x_0, a, \mathfrak{b}[a])
            \geq \inf_{a \in \mathcal{A}^{t_0}}
            J(t_0, x_0, a, \mathfrak{b}_\varepsilon^0 [a])
            \geq J(t_0, x_0, a_\varepsilon^0, \mathfrak{b}_\varepsilon^0[a_\varepsilon^0]) - \varepsilon / 3.
        \end{equation*}
        As a result, we obtain
        \begin{equation*}
            v_+(t_1, x_1) - v_+(t_0, x_0)
            \leq J(t_1, x_1, a_\varepsilon^0|_{[t_1, T]}, \mathfrak{b}_\varepsilon^1[a_\varepsilon^0|_{[t_1, T]}])
            - J(t_0, x_0, a_\varepsilon^0, \mathfrak{b}_\varepsilon^0[a_\varepsilon^0]) + 2 \varepsilon / 3.
        \end{equation*}
        Note that, by construction, $\mathfrak{b}_\varepsilon^1[a_\varepsilon^0|_{[t_1, T]}] = \mathfrak{b}_\varepsilon^0[a_\varepsilon^0]|_{[t_1, T]}$.
        Hence, using the estimate established above and the choice of $\delta$, we get
        \begin{equation*}
            v_+(t_1, x_1) - v_+(t_0, x_0)
            \leq \varepsilon.
        \end{equation*}
        The proof is complete.

    \subsection{Proof of Proposition \ref{proposition_measurable_selection}}
    \label{S:proof_proposition_measurable_selection}

        Let $\varepsilon > 0$ be given.
        For every $q \in Q$, consider the set
        \begin{equation*}
            O_\varepsilon(q)
            \coloneq \bigl\{ (p^\prime, q^\prime) \in P \times Q \colon
            |h(p^\prime, q^\prime) - h(p^\prime, q)|
            < \varepsilon \bigr\}.
        \end{equation*}
        Note that the set $O_\varepsilon(q)$ is open.
        Indeed, the function
        \begin{equation*}
            P \times Q \ni (p^\prime, q^\prime) \mapsto
            g(p^\prime, q^\prime) \coloneq |h(p^\prime, q^\prime) - h(p^\prime, q)| \in \mathbb{R}
        \end{equation*}
        is continuous due to continuity of $h$, and $O_\varepsilon(q)$ equals to $g^{- 1} ((- \infty, \varepsilon))$, which is the inverse image of the open set $(- \infty, \varepsilon) \subset \mathbb{R}$.

        For every pair $(p, q) \in P \times Q$, we have $(p, q) \in O_\varepsilon(q)$ since $|h(p, q) - h(p, q)| = 0 < \varepsilon$.
        Hence, the collection $\{O_\varepsilon(q)\}_{q \in Q}$ is an open cover of $P \times Q$.
        Since $P \times Q$ is a compact topological space, there exists a finite subcover, i.e., there exists a number $N \in \mathbb{N}$ and points $\{q_n\}_{n \in \overline{1, N}} \subset Q$ such that the collection $\{O_\varepsilon(q_n)\}_{n \in \overline{1, N}}$ is a cover of $P \times Q$.

        For every $p \in P$, the inequality below holds:
        \begin{equation*}
            \max_{q \in Q} h(p, q)
            \leq \max_{n \in \overline{1, N}} h(p, q_n) + \varepsilon.
        \end{equation*}
        Indeed, take $q_\ast \in Q$ such that
        \begin{equation*}
            \max_{q \in Q} h(p, q)
            = h(p, q_\ast).
        \end{equation*}
        Since $\{O_\varepsilon(q_n)\}_{n \in \overline{1, N}}$ is a cover of $P \times Q$, there exists $n_\ast \in \overline{1, N}$ such that $(p, q_\ast) \in O_\varepsilon(q_{n_\ast})$.
        Then, by the definition of $O_\varepsilon(q_{n_\ast})$, we have
        \begin{equation*}
            |h(p, q_\ast) - h(p, q_{n_\ast})|
            < \varepsilon.
        \end{equation*}
        Hence,
        \begin{equation*}
            h(p, q_\ast)
            \leq h(p, q_{n_\ast}) + \varepsilon
            \leq \max_{n \in \overline{1, N}} h(p, q_n) + \varepsilon,
        \end{equation*}
        and we obtain the desired inequality.

        Define a function $q_\varepsilon \colon P \to Q$ by
        \begin{equation*}
            q_\varepsilon(p)
            \coloneq q_{n_\varepsilon(p)}, \quad p \in P,
        \end{equation*}
        where
        \begin{equation*}
            n_\varepsilon(p)
            \coloneq \min \Bigl\{ n \in \overline{1, N} \colon
            h(p, q_n)
            = \max_{m \in \overline{1, N}} h(p, q_m) \Bigr\},
            \quad p \in P.
        \end{equation*}
        By construction, for every $p \in P$,
        \begin{equation*}
            h(p, q_\varepsilon(p))
            = h(p, q_{n_\varepsilon(p)})
            = \max_{m \in \overline{1, N}} h(p, q_m)
            \geq \max_{q \in Q} h(p, q) - \varepsilon.
        \end{equation*}
        So, in order to complete the proof, it remains to verify that $q_\varepsilon$ is Borel measurable.
        Note that the values of the function $q_\varepsilon$ belong to the finite set $\{q_n \colon n \in \overline{1, N}\}$.
        Therefore, the inverse image $q^{- 1}_\varepsilon(O)$ of an arbitrary open (in fact, any) subset $O$ of $Q$ equals a finite union of some of the sets $q_\varepsilon^{-1}(q_n)$, $n \in \overline{1, N}$.
        Thus, it suffices to show that, for every $n \in \overline{1, N}$, the set $q_\varepsilon^{- 1}(q_n)$, which is the inverse image of $q_n$, is a Borel subset of $P$.

        Let $n \in \overline{1, N}$ be fixed.
        We have
        \begin{equation*}
            q_\varepsilon^{- 1}(q_n)
            = \Bigl\{ p \in P \colon
            h(p, q_n)
            = \max_{m \in \overline{1, N}} h(p, q_m), \,
            h(p, q_j)
            < \max_{m \in \overline{1, N}} h(p, q_m), \,
            j \in \overline{1, n - 1} \Bigr\}.
        \end{equation*}
        Hence, introducing the sets
        \begin{equation*}
            \begin{aligned}
                P_j
                & \coloneq \Bigl\{ p \in P \colon
                h(p, q_j)
                = \max_{m \in \overline{1, N}} h(p, q_m) \Bigr\}, \\
                P_j^\circ
                & \coloneq \Bigl\{ p \in P \colon
                h(p, q_j)
                < \max_{m \in \overline{1, N}} h(p, q_m) \Bigr\},
                \quad j \in \overline{1, N},
            \end{aligned}
        \end{equation*}
        we get
        \begin{equation*}
            q_\varepsilon^{- 1}(q_n)
            = P_n \bigcap \Bigl( \bigcap_{j \in \overline{1, n - 1}} P_j^\circ \Bigr).
        \end{equation*}
        Note that, for every $j \in \overline{1, N}$, the function
        \begin{equation*}
            P \ni p \mapsto \ell_j(p) \coloneq h(p, q_j) - \max_{m \in \overline{1, N}} h(p, q_m) \in \mathbb{R}
        \end{equation*}
        is continuous.
        Then, we have $P_j = \ell_j^{- 1}(\{0\})$, which means that $P_j$ is a closed subset of $P$ as the inverse image of the closed set $\{0\} \subset \mathbb{R}$, and $P_j^\circ = \ell_j^{- 1}((- \infty, 0))$,  which means that $P_j^\circ$ is an open subset of $P$ as the inverse image of the open set $(- \infty, 0) \subset \mathbb{R}$.
        As a result, we find that $q_\varepsilon^{- 1}(q_n)$ is a Borel set as the intersection of the closed set $P_n$ and the intersection of the open sets $P_j^\circ$, $j \in \overline{1, n - 1}$.
        The proof is complete.


\begin{thebibliography}{99}

\bibitem{Bandini_Keller_2025}
    E.~Bandini, C.~Keller,
    Path-dependent Hamilton--Jacobi equations with $u$-dependence and time-measurable Hamiltonians,
    Appl. Math. Optim. 91 (2025) Art.~34.

\bibitem{Bayraktar_Keller_2018}
    E.~Bayraktar, C.~Keller,
    Path-dependent Hamilton--Jacobi equations in infinite dimensions,
    J. Funct. Anal. 275~(8) (2018) 2096--2161.

\bibitem{Bruckner_1978}
    A.~M.~Bruckner,
    Differentiation of real functions,
    Lecture Notes in Math., vol.~659,
    Springer, Berlin, 1978.

\bibitem{Cosso_Gozzi_Rosestolato_Russo_2021}
    A.~Cosso, F.~Gozzi, M.~Rosestolato, F.~Russo,
    Path-dependent Hamilton--Jacobi--Bellman equation:
    Uniqueness of Crandall--Lions viscosity solutions,
    preprint arXiv:2107.05959 (2021).

\bibitem{Crandall_Primer}
    M.~G.~Crandall,
    Viscosity solutions: A primer,
    in Viscosity Solutions and Applications,
    Lecture Notes in Math., vol.~1660,
    Springer, Berlin, 1997, pp.~1--43.

\bibitem{Crandall_Evans_Lions_1984}
    M.~G.~Crandall, L.~C.~Evans, P.-L.~Lions,
    Some properties of viscosity solutions of Hamilton--Jacobi equations,
    Trans. Amer. Math. Soc. 282~(2) (1984) 487--582.

\bibitem{Crandall_Ishii_Lions_1992}
    M.~G.~Crandall, H.~Ishii, P.-L.~Lions,
    User's guide to viscosity solutions of second order partial differential equations,
    Bull. Amer. Math. Soc. 27~(1) (1992) 1--67.

\bibitem{Crandall_Lions_1983}
    M.~G.~Crandall, P.-L.~Lions,
    Viscosity solutions of Hamilton--Jacobi equations,
    Trans. Amer. Math. Soc. 277~(1) (1983) 1--42.

\bibitem{Dupire_2019}
    B.~Dupire.
    Functional It\^o calculus.
    Quantit. Finance 19~(5) (2019) 721--729.

\bibitem{Ekren_Keller_Touzi_Zhang_2014}
    I.~Ekren, C.~Keller, N.~Touzi, J.~Zhang,
    On viscosity solutions of path dependent PDEs,
    Ann. Probab. 42~(1) (2014) 204--236.

\bibitem{Ekren_Touzi_Zhang_2016_1}
    I.~Ekren, N.~Touzi, J.~Zhang,
    Viscosity solutions of fully nonlinear parabolic path dependent PDEs: Part I.
    Ann. Probab. 44~(2) (2016) 1212--1253.

\bibitem{Elliott_Kalton_1972}
    R.~J.~Elliott, N.~J.~Kalton,
    The existence of value in differential games,
    Memoirs of AMS, no. 126,
    Amer. Math. Soc., Providence, R.I. (1972).

\bibitem{Evans_Souganidis_1984}
    L.~C.~Evans, P.~E.~Souganidis,
    Differential games and representation formulas for solutions of Ham\-il\-ton--Jacobi--Isaacs equations,
    Indiana Univ. Math. J. 33~(5) (1984) 773--797.

\bibitem{Fabbri_Gozzi_Swiech_17}
    G.~Fabbri, F.~Gozzi, A.~\'Swi\polhk ech,
    Stochastic optimal control in infinite dimension:
    Dynamic programming and HJB equations,
    Probab. Theory Stoch. Model.,
    Springer, Cham, 2017.

\bibitem{Fleming_Soner_2006}
    W.~H.~Fleming, H.~M.~Soner,
    Controlled Markov processes and viscosity solutions,
    2nd ed., Springer, New York, 2006.

\bibitem{Gomoyunov_2024}
    M.~I.~Gomoyunov,
    Zero-sum games for Volterra integral equations and viscosity solutions of path-dependent Hamilton--Jacobi equations,
    ESAIM Control Optim. Calc. Var. 31 (2025) Art.~55.

\bibitem{Gomoyunov_Lukoyanov_Plaksin_2021}
    M.~I.~Gomoyunov, N.~Yu.~Lukoyanov, A.~R.~Plaksin,
    Path-dependent Hamilton--Jacobi equations: the minimax solutions revised,
    Appl. Math. Optim. 84~(Suppl.~1) (2021) S1087--S1117.

\bibitem{Gomoyunov_Plaksin_2023}
    M.~I.~Gomoyunov, A.~R.~Plaksin,
    Equivalence of minimax and viscosity solutions of path-dependent Hamilton--Jacobi equations,
    J. Funct. Anal. 285~(11) (2023) Art.~110155.

\bibitem{Gomoyunov_Lukoyanov_RMS_2024}
    M.~I.~Gomoyunov, N.~Yu.~Lukoyanov,
    Minimax solutions of Hamilton--Jacobi equations in dynamic optimization problems for hereditary systems,
    Russ. Math. Surv. 79~(2) (2024) 229--324.

\bibitem{Gomoyunov_Lukoyanov_2024}
    M.~I.~Gomoyunov, N.~Yu.~Lukoyanov,
    The value and optimal strategies in a positional differential game for a neutral-type system,
    Proc. Steklov Inst. Math. 327~(Suppl.~1) (2024) S112--S123.

\bibitem{Hu_Papageorgiou_2000}
    S.~Hu,  N.~S.~Papageorgiou,
    Handbook of multivalued analysis. Volume II: Applications,
    Math. Appl., vol.~500,
    Kluwer, Dordrecht, The Netherlands, 2000.

\bibitem{Jiang_Keller_2025}
    J.~Jiang, C.~Keller,
    Viability for locally monotone evolution inclusions and lower semicontinuous solutions of Hamilton--Jacobi--Bellman equations in infinite dimensions,
    ESAIM: Control Optim. Calc. Var. 31 (2025) Art.~4.

\bibitem{Keller_2024}
    C.~Keller,
    Mean viability theorems and second-order Hamilton--Jacobi equations,
    SIAM J. Control Optim. 62~(3) (2024) 1615--1642.

\bibitem{Kelome_Swiech_06}
    D. Kelome, A. \'Swi\polhk ech.
    Perron's method and the method of relaxed limits for ``unbounded'' PDE in Hilbert spaces.
    Stud. Math. 176~(3) (2006) 249--277.

\bibitem{Kim_1999}
    A.~V.~Kim,
    Functional differential equations: Application of $i$-smooth calculus.
    Math. Appl., vol.~479,
    Kluwer, Dordrecht, The Netherlands, 1999.

\bibitem{Krasovskii_Krasovskii_1995}
    A.~N.~Krasovskii, N.~N.~Krasovskii,
    Control under lack of information,
    Systems Control Found. Appl.,
    Birkh\"auser, Boston, 1995.

\bibitem{Krasovskii_Subbotin_1988}
    N.~N.~Krasovskii, A.~I.~Subbotin,
    Game-theoretical control problems,
    Springer Ser. Soviet Math.,
    Springer, New York, 1988.

\bibitem{Lukoyanov_2001}
    N.~Yu.~Lukoyanov,
    Minimax solutions of functional equations of the Hamilton--Jacobi type for hereditary systems,
    Differ. Equ. 37~(2) (2001) 246--256.

\bibitem{Lukoyanov_2003a}
    N.~Yu.~Lukoyanov,
    Functional Hamilton--Jacobi type equations in $ci$-derivatives for systems with distributed delays,
    Nonlinear Funct. Anal. Appl. 8~(3) (2003) 365--397.

\bibitem{Lukoyanov_2006}
    N.~Yu.~Lukoyanov,
    On stable functionals in problems of the control of hereditary dynamical systems,
    Izv. Ural. Gos. Univ. Mat. Mekh. 46 (2006) 119--127 (in Russian).

\bibitem{Lukoyanov_2007}
    N.~Y.~Lukoyanov,
    On viscosity solution of functional Hamilton--Jacobi type equations for hereditary systems,
    Proc. Steklov Inst. Math. 259~(Suppl.~2) (2007) S190--S200.

\bibitem{Nisio_2015}
    M.~Nisio,
    Stochastic control theory: Dynamic programming principle,
    2nd ed., Probab. Theory Stoch. Model., vol.~72,
    Springer, Tokyo, 2015.

\bibitem{Plaksin_2021_SIAM}
    A.~R.~Plaksin,
    Viscosity solutions of Hamilton--Jacobi--Bellman--Isaacs equations for time-delay systems,
    SIAM J. Control Optim. 59~(3) (2021) 1951--1972.

\bibitem{Ren_Touzi_Zhang_2014}
    Z.~Ren, N.~Touzi, J.~Zhang,
    An overview of viscosity solutions of path-dependent PDEs,
    Springer Proc. Math. Stat., vol.~100,
    Springer, Cham (2014) 397--453.

\bibitem{Subbotin_1980}
    A.~I.~Subbotin,
    A generalization of the basic equation of the theory of differential games,
    Soviet Math. Dokl. 22~(2) (1980) 358--362.

\bibitem{Subbotin_1993}
    A.~I.~Subbotin,
    Minimax inequalities and Hamilton--Jacobi equations,
    Nauka, Moscow, 1991 (in Russian).

\bibitem{Subbotin_1995}
    A.~I.~Subbotin,
    Generalized solutions of first order PDEs: The dynamical optimization perspective,
    Systems Control Found. Appl.,
    Birkh\"{a}user, Basel, 1995.

\bibitem{Swiech_02}
    A.~\'Swi\polhk ech,
    Risk-sensitive control and differential games in infinite dimensions,
    Nonlinear Anal., Theory Methods Appl. 50~(4) (2002) 509--522.

\bibitem{Yong_2015}
    J.~Yong,
    Differential games: A concise introduction,
    World Scientific, Hackensack, New Jersey (2015).

\bibitem{ZeidlerIIA}
    E.~Zeidler,
    Nonlinear functional analysis and its applications.
    II/A: Linear monotone operators,
    Springer, New York, 1990.

\bibitem{ZeidlerIIB}
    E.~Zeidler,
    Nonlinear functional analysis and its applications.
    II/B: Nonlinear monotone operators,
    Springer, New York, 1990.

\bibitem{Zhou_2018_COCV}
    J.~Zhou,
    A class of infinite-horizon stochastic delay optimal control problems and a viscosity solution to the associated HJB equation,
    ESAIM Control Optim. Calc. Var. 24~(2) (2018) 639--676.

\bibitem{Zhou_2020}
    J.~Zhou,
    Viscosity solutions to first order path-dependent HJB equations,
    preprint arXiv:2004.02\-095v2 (2020).

\bibitem{Zhou_2022}
    J.~Zhou,
    Viscosity solutions to first order path-dependent Hamilton--Jacobi--Bellman equations in Hilbert space,
    Automatica 142 (2022) Art.~110347.

\bibitem{Zhou_2023}
    J.~Zhou,
    Viscosity solutions to second order path-dependent Hamilton--Jacobi--Bellman equations and applications,
    Ann. Appl. Probab. 33~(6B) (2023) 5564--5612.

\bibitem{Zhou_Touzi_Zhang_2024}
    J.~Zhou, N.~Touzi, J.~Zhang,
    Viscosity solutions for HJB equations on the process space:
    Application to mean field control with common noise,
    preprint arXiv:2401.04920 (2024).

\end{thebibliography}
\end{document}